\documentclass[smallextended]{svjour3}
\usepackage[utf8]{inputenc}
\usepackage[english]{babel}
\usepackage[margin=1in]{geometry}
\usepackage[T3,T1]{fontenc}
\usepackage{mathptmx}
\usepackage{graphicx}
\usepackage{psfrag}
\usepackage{color}
\usepackage{amsmath}
\usepackage{amscd}
\usepackage{amssymb}
\usepackage[section]{placeins}
\usepackage[normalem]{ulem}
\usepackage{caption}
\usepackage{ragged2e}
\usepackage{subfigure}
\usepackage{nicefrac}
\usepackage{bbold}
\usepackage{cancel}
\usepackage{framed}
\usepackage{dsfont}
\usepackage{mathtools}
\usepackage{soul}
\usepackage{authblk}
\usepackage{hyperref}
\usepackage{float}

\DeclareSymbolFont{tipa}{T3}{cmr}{m}{n}
\DeclareMathAccent{\invbreve}{\mathalpha}{tipa}{16}



\DeclareCaptionJustification{reallyjustified}{\justifying}
\captionsetup{justification=reallyjustified}
\spnewtheorem*{theorem*}{Theorem}{\bf}{\it}
\spnewtheorem{TheoremIntro}{Theorem}{\bf}{\it}

\newcommand{\var}{\mathrm{Var}}
\newcommand{\EX}{\mathds{E}}
\newcommand{\expect}[1]{\EX\left(#1\right)}

\newcommand{\prob}[1]{\mathrm{Pr} \left(#1\right)}

\newcommand{\vari}[1]{\var\left(#1\right)}

\newcommand{\clo}[1]{\mathrm{cl}\left( #1 \right)}
\newcommand{\inter}[1]{\mathrm{int}\left( #1 \right)}
\newcommand{\bound}[1]{\mathrm{bd}\left( #1 \right)}
\newcommand{\hull}[1]{\mathcal{H}\left( #1 \right)}


\newcommand{\MS}{M(\mathcal{S})}
\newcommand{\MSF}{M\left(\mathcal{S},\phi\right)}
\newcommand{\MSM}[1]{M_{#1}\left(\mathcal{S},\phi\right)}

\newcommand{\GSF}{\gamma(\mathcal{S},\phi)}

\makeatletter
\newcommand*\bigcdot{\mathpalette\bigcdot@{.5}}
\newcommand*\bigcdot@[2]{\mathbin{\vcenter{\hbox{\scalebox{#2}{$\m@th#1\bullet$}}}}}
\makeatother
\newcommand{\innp}[2]{#1\bigcdot #2}
\DeclareMathOperator{\argmax}{argmax}
\DeclareMathOperator{\argmin}{argmin}

\begin{document}

\title{A convex analysis approach to tight expectation inequalities}
\author{Andr\'e M. Timpanaro}
\institute{A. M. Timpanaro \at Universidade Federal do ABC, 09210-580 Santo Andr\'e, Brazil \\\email{a.timpanaro@ufabc.edu.br}}
\maketitle

\begin{abstract}

In this work, we investigate the question of how knowledge about expectations $\expect{f_i(X)}$ of a random vector $X$ translate into inequalities for $\expect{g(X)}$ for given functions $f_i$, $g$ and a random vector $X$ whose support is contained in some set $\mathcal{S}\subseteq \mathds{R}^n$. We show that there is a connection between the problem of obtaining tight expectation inequalities in this context and properties of convex hulls, allowing us to rewrite it as an optimization problem. The results of these optimization problems not only arrive at sharp bounds for $\expect{g(X)}$ but in some cases also yield discrete probability measures where equality holds.

We develop an analytical approach that is particularly suited for studying the Jensen gap problem when the known information are the average and variance, as well as a numerical approach for the general case, that reduces the problem to a convex optimization; which in a sense extends known results about the moment problem.

\end{abstract}

\section{Introduction}
\label{sec:intro}

\subsection{Setting and Motivation}

Inequalities that relate different expectations that can be computed from a random vector $X$ play a central role in probability theory and its many applications. As examples of these expectation inequalities we can cite the inequalities due to Markov, Chebyshev and Jensen \cite{Markov-1884,Chebyshev-1867,Jensen-1906}. These inequalities can be thought as answering the question of how knowledge about expectations $\expect{f_i(X)}$ of a random vector $X$ translate into bounds for $\expect{g(X)}$, for specific functions $f_i$ and $g$. For example, Jensen's inequality for a convex function $f$ reads

\begin{equation}
\expect{f(X)} \geq f\left(\expect{X}\right)
\label{eq:jensen}
\end{equation}
This allows us to translate knowledge of $\expect{X}$ into a lower bound for $\expect{f(X)}$, in situations where $f$ is convex. In these inequalities, one can also require that the support of $X$ be contained in some set $\mathcal{S}$. A simple example is Markov's inequality:

\begin{equation}
\prob{X>a} \leq \frac{\expect{X}}{a}
\label{eq:markov}
\end{equation}
for a nonnegative random variable $X$ and $a>0$. Drawing a parallel with inequality (\ref{eq:jensen}), inequality (\ref{eq:markov}) translates knowledge of $\expect{X}$ and that the support of $X$ is restricted to $\mathcal{S} = \mathds{R}_+$ into an upper bound for the expectation $\expect{\Theta(x-a)}$, where $\Theta$ is the Heaviside step function:

\begin{equation}
\Theta(x) =\left\{
\begin{array}{ll}
1,  &   \mbox{if }x\geq 0 \\
0,  &   \mbox{if }x<0
\end{array}
\right.
\label{eq:heaviside}
\end{equation}

Looking at these inequalities from this perspective, an interesting question that arises is whether an expectation inequality is the best possible (in the sense that we can get arbitrarily close to having an equality) given the required information. For example, if all we know is that $f$ is a convex function and that $\expect{X} = \phi$, then the lower bound for $\expect{f(X)}$ given by Jensen's inequality (\ref{eq:jensen}) is the best possible because we can always find a probability measure such that equality holds (the measure where $X=\phi$ with probability 1). On the other hand, if $X$ is a nonnegative random variable with $\expect{X} = \phi$, then the upper bound given by Markov's inequality (\ref{eq:markov}) for $\expect{\Theta(x-a)} = \prob{X>a}$ is clearly not the best possible when $0 < a < \phi$, as we must have $\prob{X>a} \leq 1$ (to obtain a bound where equality can always hold we would need to take the minimum between 1 and the bound in (\ref{eq:markov})).

In this work, we will consider the following questions:
\begin{question}
Let $X$ be a random vector with probability measure $\mu$ on $\mathds{R}^n$ (with the Borel $\sigma$-algebra) that has its support contained in a given set $\mathcal{S}\subseteq \mathds{R}^n$. Let $f_1, \ldots , f_m$ and $g$ be measurable functions $\mathds{R}^n\rightarrow \mathds{R}$. If $\expect{f_i(X)}_{\mu} =\phi_i\in\mathds{R}\,,\,\,i = 1,\ldots ,m$, then what lower or upper bounds can we establish for $\expect{g(X)}_{\mu}$?
\label{question1}
\end{question}

\begin{question}
Are the bounds we obtain answering question \ref{question1} the best possible, in the sense that there exists a sequence of measures (satisfying all constraints imposed in question \ref{question1}) $(\mu_k)_{k=0}^{\infty}$ such that

\[
\lim_{k\rightarrow\infty} \expect{g(X)}_{\mu_k} = B
\]
where $B$ is the obtained bound (either a lower or an upper bound).
\label{question2}
\end{question}

Many specific instances of question \ref{question1} have been considered in the literature regarding the problem of bounding the \emph{Jensen gap} \cite{Liao-Berg-2018,Walker-2014,Gao-Sitharam-Roitberg-2019,Abramovich-Persson-2016,Dragomir-2015,Abramovich-Ivelic-Pecaric-2010,Simic-2011,Dragomir-2001,Pecaric-1985,Abramovich-Jameson-Sinnamon-2004}

\[
\expect{f(X)} - f(\expect{X}).
\]
These works can be seen as generalizing the inequality (\ref{eq:jensen}). Since the Jensen bound is the best one given $\expect{X}$ for a convex function, these generalizations require \emph{different input information}. They can be broken down on bounds that require more (or different) information about the function $f$ (like analyticity assumptions \cite{Walker-2014,Abramovich-Persson-2016,Dragomir-2015}, superquadraticity \cite{Abramovich-Ivelic-Pecaric-2010,Abramovich-Jameson-Sinnamon-2004} or assumptions about asymptotic behaviours \cite{Gao-Sitharam-Roitberg-2019}) and bounds that require knowledge of more expectations besides $\expect{X}$ (like the variance \cite{Liao-Berg-2018,Walker-2014,Abramovich-Persson-2016,Dragomir-2015,Dragomir-2001}, other dispersion measures \cite{Gao-Sitharam-Roitberg-2019} or more complicated expectations \cite{Abramovich-Ivelic-Pecaric-2010,Pecaric-1985,Abramovich-Jameson-Sinnamon-2004}). A notable limitation of the current results regarding the Jensen gap is that most of them require the random vector to have a support contained in $\mathds{R}$ or even specific subsets of $\mathds{R}$.

Another famous case where the problem in questions \ref{question1} and \ref{question2} arises is in the theory of Moment Problems. More precisely, the case where the $f_i$ and $g$ are polynomials can be solved numerically (in the sense that the best lower and upper bounds for $\expect{g(X)}$ can be found) using semidefinite programming \cite{moment-book,moment-survey,moment-popescu}.

As such, addressing questions \ref{question1} and \ref{question2} in a more general setting would provide an unifying framework for studying expectation inequalities and their generalizations, as well as possibly giving some new insight into moment problems. It is also worth mentioning that interest in the problem of how certain expectations impact others has increased in some applied fields. As an example we can cite the optimization of thermal machines \cite{Cavina-2016,TUR-de-force}, showing the strong potential for applications of these questions.

\subsection{Main ideas and strategy}

The main insight behind our work is that if we consider the function $\Gamma(x) = (f_1(x), f_2(x), \ldots, f_m(x), g(x))$, then the expectation $\expect{\Gamma(X)}_{\mu}$ must be in the convex hull of $\Gamma(\mathcal{S})$. On the other hand, since we know that $\expect{f_i(X)}_{\mu} = \phi_i$ for $i = 1,\ldots ,m$, then if $\phi = (\phi_1, \ldots, \phi_m)$ it follows that $\expect{\Gamma(X)}_{\mu}$ must also be in the line $\{\phi\} \times \mathds{R}$. Studying the intersection between this line and the convex hull gives us then the possible values of $\expect{\Gamma(X)}_{\mu}$ for measures $\mu$ with support contained in $\mathcal{S}$ that satisfy the constraints on $\expect{f_i(X)}_{\mu}$ (Figure \ref{fig:hull-ineq} illustrates this in the case $m=1$)

\begin{figure}[H]
\centering
\includegraphics[width=0.5\textwidth]{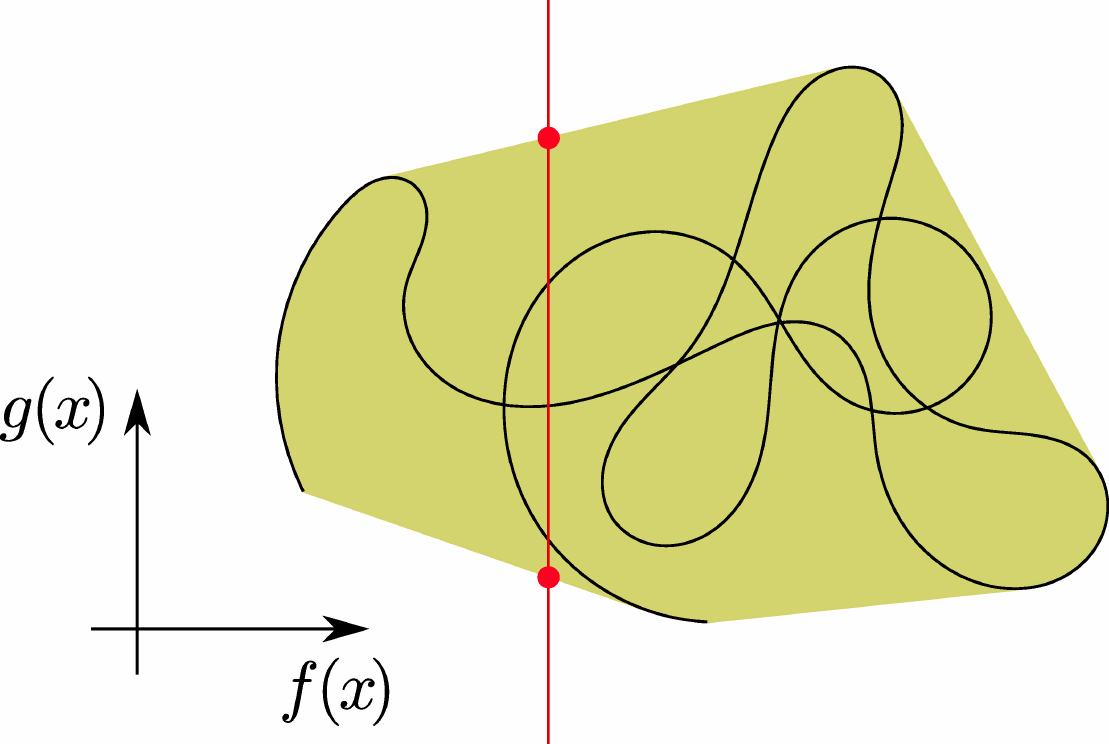}
\caption{Qualitative graph of the curve $\Gamma(x) = (f(x), g(x))$ for $x \in \mathcal{S}$ (in black) and the convex hull of its image $\Gamma(\mathcal{S})$ (in beige). No matter what the measure $\mu$ is, the point $(\expect{f(X)}_{\mu}, \expect{g(X)}_{\mu})$ must lie in the hull. The line $\{\phi\} \times \mathds{R}$ (in red) gives the points obeying the constraint $\expect{f(X)}_{\mu} = \phi$. If the extreme points of the intersection are $(\phi, g_-)$ and $(\phi, g_+)$, this implies the inequality $g_- \leq \expect{g(X)}_{\mu} \leq g_+$ for all measures $\mu$ with $\expect{f(X)}_{\mu} = \phi$ and support in $\mathcal{S}$.}
\label{fig:hull-ineq}
\end{figure}

This observation solves, at least in principle, the question of what are the best lower or upper bounds we can establish for $\expect{g(X)}_{\mu}$. However, the complexity of dealing with convex hulls severely hinders the usefulness of this way of obtaining bounds. As such, most of the work is devoted to theorems that allow these bounds to be obtained in a simpler way, instead of requiring finding the actual convex hull. The main strategies behind these theorems are that we don't need all the information in the convex hull to find the bounds, but only a few of its supporting hyperplanes and that Caratheodory's theorem can be used to bound the cardinality of the supports of the measures we need to consider.

\subsection{Notation, definitions and main results}

Before stating our main results, we'll stablish the notation conventions and definitions that will be used for the rest of this work. The functions $f_i$ will always be the $m$ functions whose expectations are known (bundled together as the vector $f$), $\phi_i$ are the values of these expectations (bundled as the vector $\phi$) and $g$ is the function whose expectation we wish to study. We will always be considering random vectors with probability measures on $\mathds{R}^n$ with the Borel $\sigma$-algebra and whose support is contained in $\mathcal{S}\subseteq \mathds{R}^n$ (although we will also consider on occasion supports contained in sets derived from $\mathcal{S}$). As such $f_i$ and $g$ will always be $\mathds{R}^n\rightarrow \mathds{R}$ measurable functions.
We further define/denote

\begin{itemize}
\item $z$ is the $(m+1)$-th coordinate of a point in $\mathds{R}^{m+1}$.
\item $\Gamma(x) \equiv (f_1(x),f_2(x),\ldots,f_m(x),g(x))$.
\item $\expect{F(X)}_{\mu}$ is the expectation of $F(x)$ with respect to the probability measure $\mu$.
\item $\MS$ is the set of all probability measures $\mu$ on $\mathds{R}^n$, with support contained in $\mathcal{S}\subseteq \mathds{R}^n$.
\item $\MSF$ is the subset of $\MS$ such that $\expect{f(X)}_{\mu} = \phi$ and $\expect{g(X)}_{\mu}$ is finite.
\item $\MSM{k}$ is the subset of $\MSF$ with measures that have at most $k$ points in their support.
\item $\mathcal{H}(S)$ is the convex hull of a set $S$.
\item We will denote by $\mathcal{D}(\mathcal{S})$ the set $\{\phi\,|\,\MSF \neq \varnothing\}$ of all values $\phi$ that give expectation constraints that can actually be satisfied in $\MS$. 
\item $\GSF \equiv \hull{\Gamma(\mathcal{S})} \cap \left(\{\phi\}\times\mathds{R}\right)$.
\item $\clo{S}$ is the closure of a set $S$.
\item $\inter{S}$ is the interior of a set $S$.
\item $\bound{S}$ is the boundary of a set $S$.
\item $\innp{u}{v}$ denotes the scalar product $\sum_i u_i v_i$.
\item $\overline{\mathds{R}}$ denotes the extended real line.
\end{itemize}

\noindent The following definition will also be useful
\begin{definition}[Progressive Cover]
A \ul{progressive cover} of a set $S\subseteq \mathds{R}^n$ is a sequence of sets $\left(S_i\right)_{i=1}^{\infty}$ such that
\begin{itemize}
\item $S_i \subseteq S_j$ if $i\leq j$
\item $S_i \subseteq S\,\,\forall\,i$
\item $\forall\,x\in S\,\exists\,i\,\vert\, x\in S_i$
\end{itemize}

A \ul{progressive compact cover} is a progressive cover where all sets in it are compact and a \ul{progressive bounded cover} is a progressive cover where all sets in it are bounded.
\end{definition}

\subsubsection{Main results}

With the definitions we made, it follows that the possible values for $(\expect{f(X)}_{\mu}, \expect{g(X)}_{\mu})$ where $\mu\in\MSF$ are the points in $\GSF$. If one of the bounds for $\expect{g(X)}_{\mu}$ is finite, then there is an endpoint $(\phi, \sigma)$ of $\GSF$. If $\Pi$ is a supporting hyperplane of $\hull{\Gamma(\mathcal{S})}$, passing through $(\phi, \sigma)$, then the equation defining $\Pi$ can be used to find the bound for $\expect{g(X)}_{\mu}$. Furthermore, if $(\phi, \sigma)$ is in the hull, then there must be a measure $\mu$ with support in $\Pi$ such that $\expect{g(X)}_{\mu} = \sigma$. (Figure \ref{fig:theo-hull} provides an illustration of the construction used in the case $m=1$ for different values of $\phi$)

\begin{figure}[hbtp]
\centering
\includegraphics[width=0.5\textwidth]{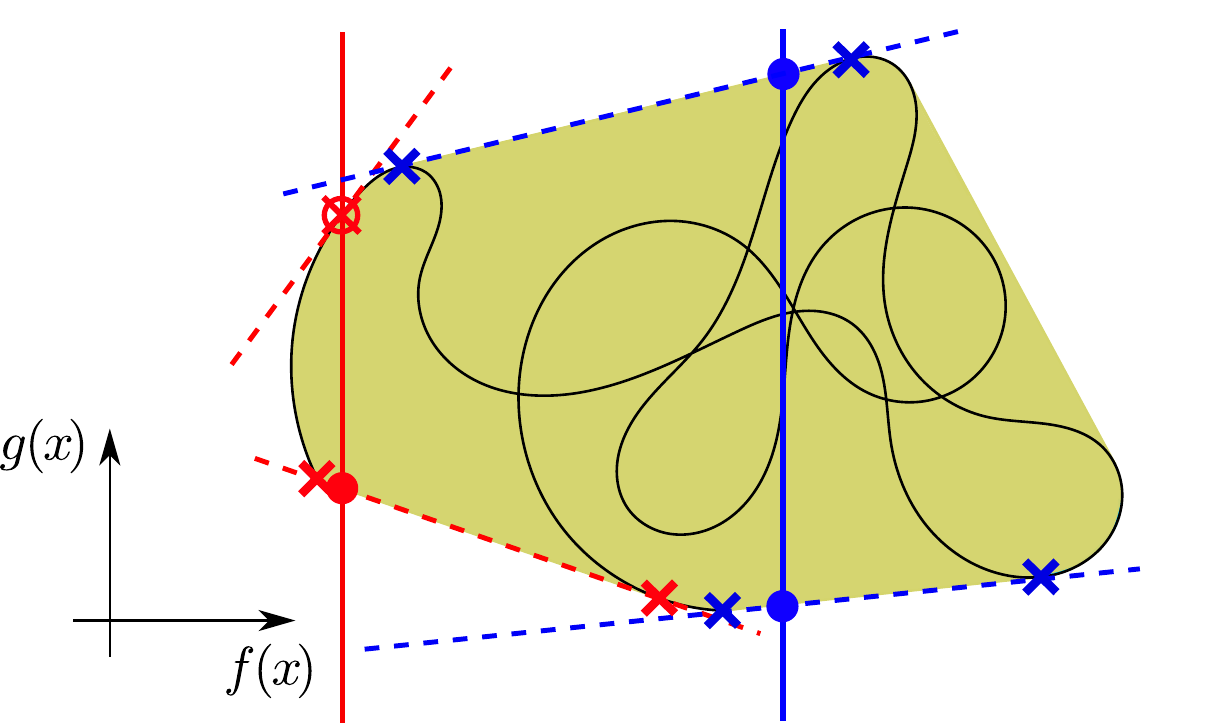}
\caption{Building upon figure \ref{fig:hull-ineq}, the vertical lines are $\{\phi_1\} \times \mathds{R}$ (to the left in red) and $\{\phi_2\} \times \mathds{R}$ (to the right in blue). The circle points are the $(\phi, \sigma)$ points and the dashed lines are the supporting hyperplanes passing through them. Knowing the equation of an hyperplane and the value of $\phi$ allows one to calculate the corresponding bound. In all these cases, the support must be in $\Pi$, so we can actually infer the support of the maximizing/minimizing measure (they correspond to the square points in the figure). So for example, we can maximize $\expect{g(X)}$ given that $\expect{f(X)} = \phi_1$ with a measure whose support is a singleton.
}
\label{fig:theo-hull}
\end{figure}

More precisely, we have the following theorem:

\begin{TheoremIntro}[Proven in section \ref{sec:geom-approach} as theorem \ref{th:analytical}]
If $\mathcal{S} \subseteq \mathds{R}^n$ and
\[
\sigma_{-} = \inf_{\GSF} z\quad\quad\quad\quad \sigma_{+} = \sup_{\GSF} z
\]
then there exist vectors $\alpha_{\pm} = (\alpha_{\pm}^1, \ldots, \alpha_{\pm}^m)$ and values $\beta_{\pm}, c_{\pm}$ such that $\beta_{\pm} \geq 0$, $(\alpha_{\pm}^1, \ldots, \alpha_{\pm}^m, \beta_{\pm})\neq\vec{0}$ and

\begin{itemize}
\item If $\sigma_+$ is finite, then
\[
\innp{\alpha_+}{f(x)} + \beta_+g(x) + c_+ \leq 0\,\forall\, x\in\mathcal{S}\quad\quad\mbox{and}\quad
\sup_{\mu\in\MSF}\expect{\innp{\alpha_+}{f(X)} + \beta_+g(X) + c_+}_{\mu} = 0
\]
\item If $\sigma_-$ is finite, then
\[
\innp{\alpha_-}{f(x)} + \beta_-g(x) + c_- \geq 0\,\forall\, x\in\mathcal{S}\quad\quad\mbox{and}\quad
\inf_{\mu\in\MSF}\expect{\innp{\alpha_-}{f(X)} + \beta_-g(X) + c_-}_{\mu} = 0
\]
\item Moreover, for each case where $\sigma_{\pm}$ is finite, if $(\phi,\sigma_{\pm})\in\GSF$, then there exists a measure $\mu_{\pm}\in\MSM{m+1}$ with support $s_{\pm}$, such that
\[
\expect{g(X)}_{\mu_{\pm}} = \sigma_{\pm}\quad\mbox{and}\quad \innp{\alpha_{\pm}}{f(x)} + \beta_{\pm}g(x) + c_{\pm} = 0\,\forall\, x\in s_{\pm}
\]
\end{itemize}
\label{th-intro:hyperplanes}
\end{TheoremIntro}

In fact, we don't need to use supporting hyperplanes of $\hull{\Gamma(\mathcal{S})}$ at all. As long as $\Pi$ is an hyperplane that separates $\mathds{R}^{m+1}$ in 2 regions, one of which has no intersection with $\Gamma(\mathcal{S})$, and $\Pi$ is not parallel to the $z$ direction, then its equation will provide a bound for $\expect{g(X)}$. We can then rewrite the problem of finding sharp bounds for $\expect{g(X)}$ in terms of an optimization of the parameters defining $\Pi$ (Figure \ref{fig:theo-dual} illustrates this)

\begin{figure}[hbtp]
\centering
\includegraphics[width=0.5\textwidth]{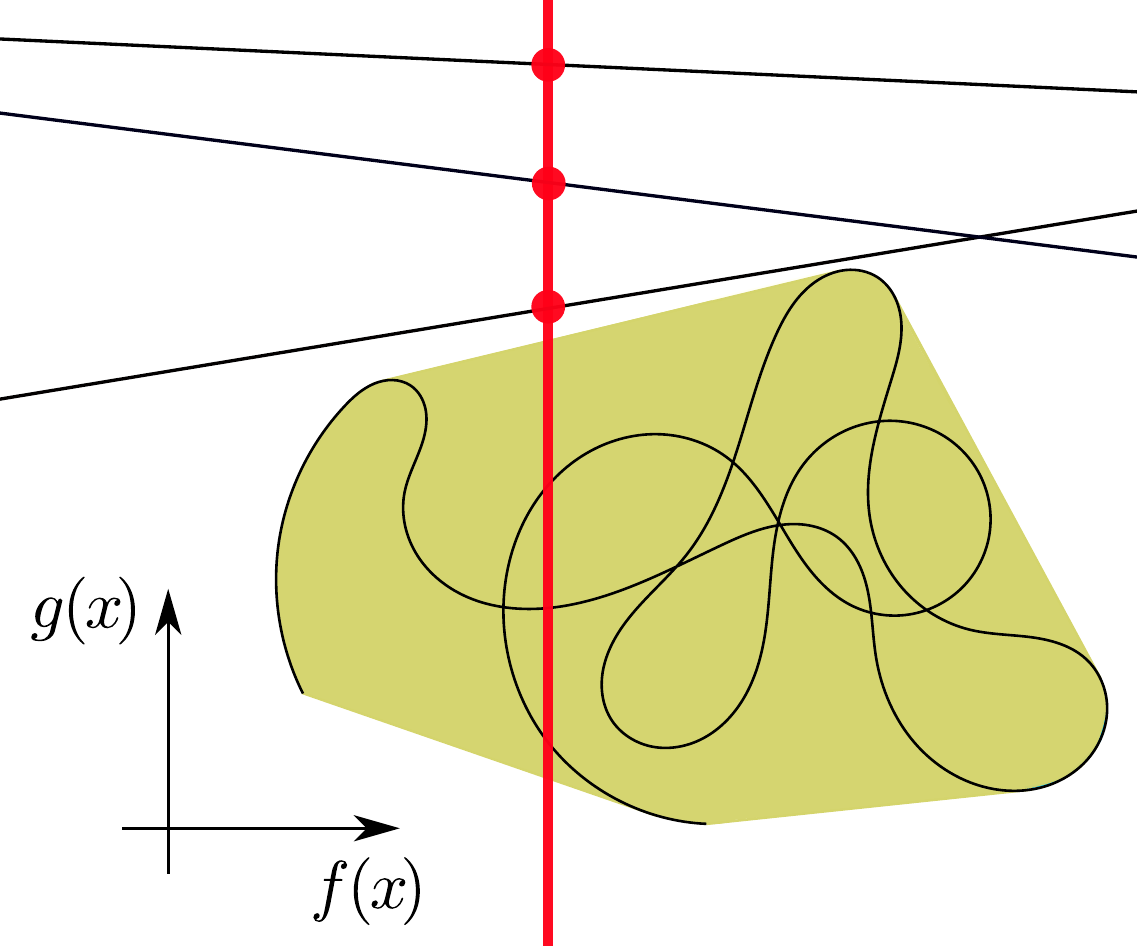}
\caption{Still building upon figure \ref{fig:hull-ineq}, the red line is $\{\phi\} \times \mathds{R}$ and the black lines are candidates for the optimal hyperplane. The red points are of the form $(\phi, \sigma_{\Pi})$, where $\sigma_{\Pi}$ is an upper bound for $\expect{g(X)}$, associated with $\Pi$. Minimizing said upper bound will give us the bound we are actually interested in.
}
\label{fig:theo-dual}
\end{figure}

\begin{TheoremIntro}[Proven in section \ref{sec:numerics} as theorem \ref{th:numerical}]
Let $\mathcal{S}\subseteq\mathds{R}^n$. If $\phi\notin\bound{\mathcal{D}(\mathcal{S})}$, then
\[
\inf_{\mu\in\MSF} \expect{g(X)}_{\mu} = \sup_{\alpha\in\mathds{R}^m}\left(\inf_{x\in\mathcal{S}} \left(g(x) + \innp{\alpha}{(f(x) - \phi)}\right)\right)
\]\[
\mbox{and}\quad\quad\sup_{\mu\in\MSF} \expect{g(X)}_{\mu} = \inf_{\alpha\in\mathds{R}^m}\left(\sup_{x\in\mathcal{S}} \left(g(x) + \innp{\alpha}{(f(x) - \phi)}\right)\right)
\]
\label{th-intro:dual}
\end{TheoremIntro}

Note that these results go beyond the case where $X$ is a single random variable and allow the study of random vectors. Theorem \ref{th:analytical} is more suited for finding analytical bounds, specially in cases with low dimensionality and few expectations known (see section \ref{sssec:applications} for an example). On the other hand, as will be shown later, theorem \ref{th:numerical} rewrites the problem as a convex optimization and as such is more suited for numerics. It will also allow us to access problems with higher dimensionality and larger numbers of expectation constraints. 

\subsubsection{Applications}
\label{sssec:applications}

Using theorem \ref{th:analytical} we can bound the cardinality of the support of our maximizing/minimizing probability measure. If the functions $f_i$ and $g$ have special properties, this cardinality bound can be made more stringent. In the case of the following theorem, using $f_1(x) = x$, $f_2(x)=x^2$, requiring that $g'(x)$ be strictly convex and that the variable be in an interval $[a,b]$, we can show that the minimizing/maximizing measures have at most 2 points in their support, allowing us to recover these measures and calculate the bounds explicitly:

\begin{TheoremIntro}[Proven in section \ref{sec:jensen-gap-conv} as theorem \ref{th:dg-strict-convex}]
Let $X$ be a random variable with support contained in $[a,b]$ and let $g:[a,b] \rightarrow \mathds{R}$ be bounded, differentiable and such that $g'(x)$ is strictly convex. Then for every $\lambda$ and $\sigma^2 > 0$ that are possible values for the average and variance of a variable in $[a,b]$, there exist probability measures $\mu_{\pm}$ with
\[
\expect{X}_{\mu_{\pm}} = \lambda\quad\quad\mbox{and}\quad\quad \vari{X}_{\mu_{\pm}} = \sigma^2
\]
such that
\[
\expect{g(X)}_{\mu_-} = \frac{\sigma^2 g(a) + (\lambda - a)^2 g\left(\lambda + \frac{\sigma^2}{\lambda - a}\right)}{\sigma^2 + (\lambda - a)^2}
\]\[
\expect{g(X)}_{\mu_+} = \frac{\sigma^2 g(b) + (\lambda - b)^2 g\left(\lambda + \frac{\sigma^2}{\lambda - b}\right)}{\sigma^2 + (\lambda - b)^2}
\]
and for every measure $\mu$ in $M([a,b])$, with the same average and variance, we have
\[
\expect{g(X)}_{\mu_-} \leq \expect{g(X)}_{\mu} \leq \expect{g(X)}_{\mu_+}
\]
\label{th-intro:jensen}
\end{TheoremIntro}

This result gives us sharp bounds for $\expect{g(X)}$ in terms of the function $g$, evaluated at points determined entirely by $a$, $b$, $\lambda$ and $\sigma$, which is an useful feature when using it as a bound for the Jensen gap of $g$.

As an application of this theorem we derive bounds for the moment generating function of a positive random variable (section \ref{sssec:mom-gen}). If $X$ is a strictly positive random variable, with average $\lambda$ and variance $\sigma^2$, then we can bound its moment generating function as:

\[
\expect{e^{Xs}} \geq \frac{\sigma^2 + \lambda^2e^{\nicefrac{(\lambda^2 +\sigma^2)s}{\lambda}}}{\sigma^2 + \lambda^2}\quad\quad\mbox{if }s>0\mbox{ and}
\]\[
e^{\lambda s} \leq \expect{e^{Xs}} \leq \frac{\sigma^2 + \lambda^2e^{\nicefrac{(\lambda^2 +\sigma^2)s}{\lambda}}}{\sigma^2 + \lambda^2}\quad\quad\mbox{if }s<0
\]
Similar bounds for the power means $\expect{X^s}^{\nicefrac{1}{s}}$ of a positive variable were also be obtained (section \ref{sssec:pow-mean}).

\subsection{Outline}

In section \ref{sec:hulls-expect} we establish the connection between the problem of bounding expectations and properties of convex hulls, while also making some examples, like a fairly simple generalization of Jensen's inequality corresponding to the optimal bounds when the average is the only information known (section \ref{sec:jensen-example}). In section \ref{sec:geom-approach} we prove theorem \ref{th-intro:hyperplanes} and provide some examples on how to use it, highlighting the importance of progressive covers for applications of this theorem. Specific results for when $\expect{X}$ and $\vari{X}$ are the known information (including the results in section \ref{sssec:applications}) are proven in section \ref{sec:jensen-gap}. Finally, in section \ref{sec:numerics} we prove theorem \ref{th-intro:dual} that reduces the problem of bounding $\expect{g(X)}$ given $\expect{f_i(X)}$ to a convex optimization problem and we provide some examples, including one that involves random vectors.

\section{Convex hulls and expectations}
\label{sec:hulls-expect}

In order to tackle our problem, we first recall the well known lemma

\begin{lemma}

If $\mathcal{S} \subseteq \mathds{R}^n$ and $\mu\in\MS$ has a non-divergent expectation, $\expect{X}_{\mu}$, then $\expect{X}_{\mu}\in\mathcal{H}(\mathcal{S})$.

\label{conv-lemma}
\end{lemma}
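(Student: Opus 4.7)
The plan is to argue by contradiction using the separating hyperplane theorem in $\mathds{R}^n$. Suppose $\mathbf{m}\equiv\expect{X}_{\mu}$ does not lie in $\hull{\mathcal{S}}$. Since $\hull{\mathcal{S}}$ is convex, there exist a nonzero $v\in\mathds{R}^n$ and a constant $c\in\mathds{R}$ such that $\innp{v}{y}\leq c$ for every $y\in\hull{\mathcal{S}}$, while $\innp{v}{\mathbf{m}}\geq c$. Because $\mu$ is supported in $\mathcal{S}\subseteq\hull{\mathcal{S}}$, the scalar random variable $\innp{v}{X}$ is $\mu$-almost surely bounded above by $c$, so by linearity of expectation $\innp{v}{\mathbf{m}}=\expect{\innp{v}{X}}_{\mu}\leq c$. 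Combining this with the separation inequality forces the equality $\innp{v}{\mathbf{m}}=c$, which in turn forces $\innp{v}{X}=c$ $\mu$-almost surely, so the support of $\mu$ actually lies inside the affine hyperplane $H=\{y\,:\,\innp{v}{y}=c\}$.

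The main obstacle is the familiar distinction between convex hull and closed convex hull: a non-closed convex set does not admit strict separation from points in its relative boundary, so the argument as stated only gives $\mathbf{m}\in\clo{\hull{\mathcal{S}}}$ in one shot. To upgrade this to the stated conclusion I would induct on the dimension of the smallest affine subspace containing $\mathcal{S}$. The equality case of the previous paragraph confines $\mu$ to $\mathcal{S}\cap H$, a set living in a strictly lower-dimensional affine subspace, where the inductive hypothesis applies and returns $\mathbf{m}\in\hull{\mathcal{S}\cap H}\subseteq\hull{\mathcal{S}}$. The base case (dimension zero) is trivial, since then $\mathcal{S}$ is a single point and $\mathbf{m}$ coincides with it. An alternative route that I would keep in reserve is to approximate $\mu$ by finitely supported measures (for which $\expect{X}_{\mu}$ is literally a convex combination of points in $\mathcal{S}$ and is therefore manifestly in $\hull{\mathcal{S}}$) and then to use Caratheodory's theorem to represent each approximation using at most $n+1$ atoms of $\mathcal{S}$, taking a limit of the coefficients and atoms rather than of the measures themselves, so that the limiting expectation stays inside the hull rather than only in its closure.
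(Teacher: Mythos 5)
The paper does not actually prove this lemma---it is recalled as ``well known''---so your argument has to stand on its own, and it is the standard induction-on-dimension proof of the fact that a barycenter lies in the convex hull (not merely the closed convex hull) of the support. The skeleton is sound: separation gives $\innp{v}{X}\leq c$ $\mu$-almost surely and $\innp{v}{\mathbf{m}}\geq c$, integrability of $\innp{v}{X}$ (from the non-divergence hypothesis) gives $\innp{v}{\mathbf{m}}\leq c$, the equality case forces $\innp{v}{X}=c$ almost surely, and since $H$ is closed this confines $\supp{\mu}$ to $\mathcal{S}\cap H$, setting up the inductive step. You also correctly diagnose that the one-shot argument only reaches $\clo{\hull{\mathcal{S}}}$ and that the closure-versus-hull distinction is the whole content of the lemma.

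The one assertion you make without justification is that $\mathcal{S}\cap H$ ``lives in a strictly lower-dimensional affine subspace.'' For an arbitrary separating hyperplane this can fail: improper separation may place all of $\hull{\mathcal{S}}$, and hence all of $\mathcal{S}$, inside $H$, in which case $\mathcal{S}\cap H=\mathcal{S}$ and the induction never advances. You need \emph{proper} separation, which is available here: since $\mathbf{m}\notin\hull{\mathcal{S}}$ you certainly have $\mathbf{m}\notin\relint{\hull{\mathcal{S}}}$, so the proper separation theorem (Rockafellar, Theorem 11.3) supplies $v$ with $\sup_{y\in\hull{\mathcal{S}}}\innp{v}{y}\leq\innp{v}{\mathbf{m}}$ \emph{and} $\inf_{y\in\hull{\mathcal{S}}}\innp{v}{y}<\innp{v}{\mathbf{m}}$; the strict inequality guarantees $\mathcal{S}\not\subseteq H$, hence $\mathrm{aff}(\mathcal{S}\cap H)\subseteq\mathrm{aff}(\mathcal{S})\cap H$ genuinely drops in dimension and the induction terminates. (Equivalently, work inside $\mathrm{aff}(\mathcal{S})$ and take a supporting hyperplane at the relative-boundary point $\mathbf{m}$, which cannot contain the nonempty relative interior of $\hull{\mathcal{S}}$.) With that one sentence added the proof is complete. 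By contrast, the reserve argument via finitely supported approximations does not close the same gap as sketched: when $\mathcal{S}$ is not closed the limiting atoms may escape $\mathcal{S}$, landing you back in $\clo{\hull{\mathcal{S}}}$, so the induction is the route to keep.
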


The same general reasoning used in proving lemma \ref{conv-lemma} can be used to extend it to situations where we are mapping the random vector:

\begin{lemma}

Let $\mu\in\MS$, with $\mathcal{S} \subseteq \mathds{R}^n$ and let $\Psi: \mathds{R}^n\rightarrow \mathds{R}^m$ be a measurable function such that $\expect{\Psi(X)}_{\mu}$ is non-divergent, 
then $\expect{\Psi(X)}_{\mu} \in \mathcal{H}(\Psi(\mathcal{S}))$.
\label{conv-lemma-psi}
\end{lemma}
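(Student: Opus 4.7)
The plan is to reduce the claim to Lemma \ref{conv-lemma} by pushing $\mu$ forward through $\Psi$. Specifically, I would define $\nu$ on $\mathds{R}^m$ by $\nu(B) \equiv \mu(\Psi^{-1}(B))$ for every Borel set $B$; measurability of $\Psi$ guarantees that this is a well-defined probability measure on $\mathds{R}^m$ equipped with the Borel $\sigma$-algebra. The standard change of variables for pushforward measures then gives
\[
\expect{\Psi(X)}_{\mu} = \int_{\mathds{R}^n} \Psi(x)\, d\mu(x) = \int_{\mathds{R}^m} y\, d\nu(y) = \expect{Y}_{\nu},
\]
where $Y$ denotes the identity random vector on $\mathds{R}^m$, and the right-hand side is non-divergent by the hypothesis on $\expect{\Psi(X)}_{\mu}$.

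Next I would verify that $\nu$ has its support contained in $\Psi(\mathcal{S})$ in the same operational sense required by Lemma \ref{conv-lemma}: for any Borel set $B \subseteq \mathds{R}^m$ with $B \cap \Psi(\mathcal{S}) = \varnothing$, the preimage satisfies $\Psi^{-1}(B) \cap \mathcal{S} = \varnothing$ (any $x$ in the intersection would force $\Psi(x)$ to lie in both $B$ and $\Psi(\mathcal{S})$). Since $\mu$ is concentrated on $\mathcal{S}$, this gives $\nu(B) = \mu(\Psi^{-1}(B)) = 0$, so $\nu \in M(\Psi(\mathcal{S}))$. Applying Lemma \ref{conv-lemma} to $\nu$ then yields $\expect{Y}_{\nu} \in \mathcal{H}(\Psi(\mathcal{S}))$, which combined with the change-of-variables identity above is the claim.

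The one subtlety to watch for is that $\Psi(\mathcal{S})$ need not be Borel even when $\mathcal{S}$ is, so the phrase \emph{$\nu$ has support in $\Psi(\mathcal{S})$} must be read in the same operational sense as in the statement of Lemma \ref{conv-lemma} (i.e., $\nu$ assigns zero mass to every Borel set disjoint from $\Psi(\mathcal{S})$). The argument above is arranged to produce exactly this condition without ever asking $\Psi(\mathcal{S})$ itself to be measurable, so the reduction to Lemma \ref{conv-lemma} goes through cleanly and I do not foresee any further obstacle beyond this bookkeeping point.
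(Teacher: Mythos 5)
Your proof is correct. The paper gives no written argument for this lemma---it only remarks that the reasoning behind Lemma \ref{conv-lemma} extends to the mapped vector---so the relevant comparison is with that remark: where the paper suggests re-running the (unstated) proof of Lemma \ref{conv-lemma} with $\Psi(X)$ in place of $X$, you instead invoke Lemma \ref{conv-lemma} as a black box, applied to the pushforward $\nu=\mu\circ\Psi^{-1}$; this is the cleaner formalization of the same idea, and the change-of-variables and concentration verifications you give are exactly what is needed. Your bookkeeping point about $\Psi(\mathcal{S})$ possibly failing to be Borel is handled consistently with how the paper must be read anyway: since $\mathcal{S}$ is an arbitrary, not necessarily closed, subset (e.g.\ $\mathcal{S}=\,]0,\infty[$ in section \ref{sssec:pow-mean}), ``support contained in $\mathcal{S}$'' in the definition of $\MS$ has to be understood in the concentration sense (every Borel set disjoint from $\mathcal{S}$ has measure zero) rather than as containment of the topological support, and under that reading your check that $\nu\in M\left(\Psi(\mathcal{S})\right)$ is precisely the hypothesis of Lemma \ref{conv-lemma}. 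The only residual burden is that Lemma \ref{conv-lemma} must indeed hold in this operational sense, with conclusion in $\hull{\Psi(\mathcal{S})}$ itself and not merely its closure; that statement is true (standard supporting-hyperplane plus induction-on-dimension argument), so no gap results.
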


From here the following corollary follows:

\begin{corollary}

Let $\Psi: \mathds{R}^n\rightarrow \mathds{R}^m$ be a measurable function, $\mathcal{S} \subseteq \mathds{R}^n$ and $\Xi \in \mathds{R}^m$. Then there exists a measure $\mu \in \MS$ such that

\[
\expect{\Psi(X)}_{\mu} = \Xi
\]
iff $\Xi \in \hull{\Psi(\mathcal{S})}$.

\label{co:existence}
\end{corollary}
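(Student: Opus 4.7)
The corollary is an ``iff'' statement, so the plan is to handle the two directions separately. The forward direction ($\Rightarrow$) is essentially a direct consequence of Lemma \ref{conv-lemma-psi}: if $\mu\in\MS$ satisfies $\expect{\Psi(X)}_{\mu} = \Xi$, then in particular $\expect{\Psi(X)}_{\mu}$ is non-divergent (since it equals $\Xi\in\mathds{R}^m$), so Lemma \ref{conv-lemma-psi} immediately yields $\Xi\in\hull{\Psi(\mathcal{S})}$.

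For the reverse direction ($\Leftarrow$), the plan is to use Carathéodory's theorem on convex hulls to explicitly construct $\mu$ as a finitely supported discrete measure. Since $\Xi\in\hull{\Psi(\mathcal{S})}\subseteq\mathds{R}^m$, Carathéodory's theorem guarantees the existence of points $y_1,\ldots,y_{m+1}\in\Psi(\mathcal{S})$ and nonnegative weights $\lambda_1,\ldots,\lambda_{m+1}$ with $\sum_i \lambda_i = 1$ such that $\Xi = \sum_{i=1}^{m+1}\lambda_i y_i$. Because each $y_i\in\Psi(\mathcal{S})$, I can pick (finitely many choices, so no foundational issue) some $x_i\in\mathcal{S}$ with $\Psi(x_i)=y_i$. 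I would then define
\[
\mu \;=\; \sum_{i=1}^{m+1}\lambda_i\,\delta_{x_i},
\]
which is a probability measure on $\mathds{R}^n$ (with the Borel $\sigma$-algebra) whose support is contained in $\{x_1,\ldots,x_{m+1}\}\subseteq\mathcal{S}$, hence $\mu\in\MS$. A direct computation then gives
\[
\expect{\Psi(X)}_{\mu} \;=\; \sum_{i=1}^{m+1}\lambda_i\,\Psi(x_i) \;=\; \sum_{i=1}^{m+1}\lambda_i y_i \;=\; \Xi,
\]
as required.

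I do not expect any real obstacle here: the forward direction is a one-line invocation of the preceding lemma, while the reverse direction is a textbook application of Carathéodory's theorem combined with the fact that any finite convex combination can be realised as the expectation of a finitely supported probability measure. The only mild subtlety worth flagging explicitly in the write-up is that the measure constructed is automatically in $\MS$ (i.e.\ Borel, with support in $\mathcal{S}$), which is clear for discrete measures on finitely many points of $\mathcal{S}$.
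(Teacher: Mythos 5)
Your proof is correct and follows essentially the same route as the paper's: the forward direction is a direct application of Lemma \ref{conv-lemma-psi}, and the reverse direction uses Carath\'eodory's theorem to write $\Xi$ as a finite convex combination of points of $\Psi(\mathcal{S})$ and then realises it by a finitely supported discrete measure on preimages in $\mathcal{S}$. The only difference is cosmetic: you make the Carath\'eodory bound of $m+1$ points and the Dirac-measure construction explicit, which the paper leaves implicit.
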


\begin{proof}
If $\Xi \in \hull{\Psi(\mathcal{S})}$, then by Caratheodory's theorem, $\Xi$ is a finite convex combination of elements in $\Psi(\mathcal{S})$:

\[
\Xi = \sum_{k} \alpha_k P_k
\]
The measure $\mu\in\MS$ we are after can be obtained by taking $Q_k\in\mathcal{S}$ such that $\Psi(Q_k) = P_k$ and attributing probability $\alpha_k$ to each $Q_k$.

On the other hand, if $\expect{\Psi(X)}_{\mu} = \Xi$ for some $\mu\in\MS$, then all we need to do is apply lemma \ref{conv-lemma-psi}. \qed
\end{proof}
which also leads to the following corollary that will be useful later

\begin{corollary}
$\mathcal{D}(\mathcal{S}) = \hull{f(\mathcal{S})}$
\label{co:D-Hf}
\end{corollary}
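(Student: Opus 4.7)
The plan is to establish the identity by proving both set inclusions, leveraging Corollary \ref{co:existence} applied to the map $\Psi = f$, while paying careful attention to the finiteness requirement on $\expect{g(X)}_{\mu}$ hidden in the definition of $\MSF$.

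For the inclusion $\mathcal{D}(\mathcal{S}) \subseteq \hull{f(\mathcal{S})}$, I would take $\phi \in \mathcal{D}(\mathcal{S})$ and by definition pick some $\mu \in \MSF$. Such a $\mu$ belongs to $\MS$ and satisfies $\expect{f(X)}_{\mu} = \phi$, so by Lemma \ref{conv-lemma-psi} (or equivalently the ``only if'' direction of Corollary \ref{co:existence}) applied with $\Psi = f$, the expectation $\phi$ must lie in $\hull{f(\mathcal{S})}$. This direction is essentially a direct quotation of the previous lemma.

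For the reverse inclusion $\hull{f(\mathcal{S})} \subseteq \mathcal{D}(\mathcal{S})$, I would take $\phi \in \hull{f(\mathcal{S})}$ and invoke Caratheodory's theorem to write $\phi = \sum_{k} \alpha_k f(Q_k)$ as a finite convex combination with $Q_k \in \mathcal{S}$, exactly as in the proof of Corollary \ref{co:existence}. The measure $\mu$ that assigns probability $\alpha_k$ to $Q_k$ lies in $\MS$ and satisfies $\expect{f(X)}_{\mu} = \phi$. The key observation is that membership in $\MSF$ (not merely in the preimage under $\expect{f(\cdot)}$) also requires $\expect{g(X)}_{\mu}$ to be finite; this is automatic here because $\mu$ is finitely supported, making $\expect{g(X)}_{\mu} = \sum_k \alpha_k g(Q_k)$ a finite sum of real numbers. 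Hence $\mu \in \MSF$ and $\phi \in \mathcal{D}(\mathcal{S})$.

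The only subtle point, which I would flag as the main obstacle, is precisely this finiteness issue: $\mathcal{D}(\mathcal{S})$ is defined through $\MSF$ rather than through $\MS$, so a priori it could be strictly smaller than the set $\{\phi \mid \exists\,\mu\in\MS,\ \expect{f(X)}_{\mu} = \phi\}$ given by Corollary \ref{co:existence}. Caratheodory's theorem resolves this cleanly by yielding a finitely supported witness measure, for which every expectation of a measurable function is automatically finite; this is why the corollary goes through without needing any growth or integrability assumption on $g$.
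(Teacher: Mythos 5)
Your proof is correct and follows essentially the same route as the paper: one inclusion is immediate from Corollary \ref{co:existence} (equivalently Lemma \ref{conv-lemma-psi} with $\Psi=f$), and the other uses Caratheodory's theorem to produce a finitely supported measure, whose finite support automatically makes $\expect{g(X)}_{\mu}$ finite and hence places it in $\MSF$. The finiteness subtlety you flag is exactly the point the paper's own proof addresses in the same way.
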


\begin{proof}
Corollary \ref{co:existence} immediatly implies that $\phi\in\mathcal{D}(\mathcal{S}) \Rightarrow \phi\in\hull{f(\mathcal{S})}$. On the other hand, if $\phi\in\hull{f(\mathcal{S})}$, then we can use Caratheodory's theorem and a reasoning similar to the one in the proof of corollary \ref{co:existence} to find a measure $\mu$ with finite support and such that $\expect{f(X)}_{\mu} = \phi$. Since the support of $\mu$ is finite, then $\expect{g(X)}_{\mu}$ is finite, implying $\mu\in\MSF$ and hence $\phi\in\mathcal{D}(\mathcal{S})$. \qed
\end{proof}

The connection of corollary \ref{co:existence} with the problem we are interested in is given by the following theorem

\begin{theorem}[Hull Inequality]
Let $\mathcal{S} \subseteq \mathds{R}^n$, then

\[
\inf_{\GSF} z = \inf_{\mu\in\MSF} \expect{g(X)}_{\mu} = \inf_{\mu\in\MSM{m+2}} \expect{g(X)}_{\mu}
\]\[
\sup_{\GSF} z = \sup_{\mu\in\MSF} \expect{g(X)}_{\mu} = \sup_{\mu\in\MSM{m+2}} \expect{g(X)}_{\mu}
\]
\noindent
(where we recall that $z$ denotes the $(m+1)$-th coordinate of a point)
\label{hull-ineq}
\end{theorem}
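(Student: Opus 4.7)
The plan is to show the three quantities coincide by establishing a chain of three inequalities that loops back on itself. Only two tools are needed: Lemma \ref{conv-lemma-psi} applied with $\Psi = \Gamma$, which places expected values of $\Gamma(X)$ inside $\hull{\Gamma(\mathcal{S})}$, and Carathéodory's theorem in $\mathds{R}^{m+1}$, which lets me represent any point of that hull as a convex combination of at most $m+2$ of its generators. I will describe the infimum chain; the supremum one is symmetric.

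First, for any $\mu \in \MSF$ I would apply Lemma \ref{conv-lemma-psi} to conclude that $\expect{\Gamma(X)}_\mu = (\phi, \expect{g(X)}_\mu)$ lies in $\hull{\Gamma(\mathcal{S})}$, and since its first $m$ coordinates are exactly $\phi$ the point lies in $\GSF$. This shows every achievable value $\expect{g(X)}_\mu$ appears as a $z$-value on $\GSF$, so $\inf_{\GSF} z \leq \inf_{\mu\in\MSF}\expect{g(X)}_\mu$.

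Next, for each $(\phi, z_0) \in \GSF$ I would invoke Carathéodory to write $(\phi, z_0) = \sum_{k=1}^{m+2} \alpha_k \Gamma(x_k)$ with $x_k \in \mathcal{S}$, $\alpha_k \geq 0$, $\sum_k \alpha_k = 1$, and assemble the discrete measure $\mu = \sum_k \alpha_k \delta_{x_k}$. Reading off coordinates gives $\expect{f(X)}_\mu = \phi$ and $\expect{g(X)}_\mu = z_0$ (finite), and the support has at most $m+2$ points, so $\mu \in \MSM{m+2}$. Hence every $z_0$ in the $z$-projection of $\GSF$ is realized by some measure in $\MSM{m+2}$, giving $\inf_{\mu\in\MSM{m+2}}\expect{g(X)}_\mu \leq \inf_{\GSF} z$. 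Together with the trivial inclusion $\MSM{m+2} \subseteq \MSF$ which forces $\inf_{\mu\in\MSF}\expect{g(X)}_\mu \leq \inf_{\mu\in\MSM{m+2}}\expect{g(X)}_\mu$, the three inequalities close into equalities.

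There is no genuinely hard step here, as everything follows mechanically from the two tools; the main thing to be careful about is that the Carathéodory preimages must be chosen inside $\mathcal{S}$ (which is possible because the hull generators come from $\Gamma(\mathcal{S})$) and that the resulting discrete measure automatically has finite $\expect{g(X)}_\mu$. The only edge case I would flag is when $\GSF$ is empty: the first step then forces $\MSF = \varnothing$ as well, so all three quantities equal $+\infty$ (respectively $-\infty$ for the supremum) under the usual convention, and equality is preserved.
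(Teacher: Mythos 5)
Your proof is correct, and it uses the same two ingredients as the paper's own argument — lemma \ref{conv-lemma-psi} applied to $\Gamma$, and Carath\'eodory's theorem in $\mathds{R}^{m+1}$ (the content of corollary \ref{co:existence}) — but it assembles them along a genuinely different route. The paper first establishes $\sup_{\GSF} z = \sup_{\mu\in\MSF}\expect{g(X)}_{\mu}$ by two separate arguments (a sequence of points of $\GSF$ approaching the supremum combined with corollary \ref{co:existence} in one direction, and a proof by contradiction via lemma \ref{conv-lemma-psi} in the other, with the infinite case handled on its own), and only afterwards appends the passage to $\MSM{m+2}$ as an additional Carath\'eodory step. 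You instead close the single cyclic chain $\inf_{\GSF} z \leq \inf_{\mu\in\MSF}\expect{g(X)}_{\mu} \leq \inf_{\mu\in\MSM{m+2}}\expect{g(X)}_{\mu} \leq \inf_{\GSF} z$, which delivers all three equalities at once, needs no approximating sequence and no contradiction, and treats the unbounded and empty cases uniformly under the extended-real conventions. What the paper's longer route buys is mostly expository: its sequence argument makes explicit how the optimal value is approached by admissible measures, which is the saturation issue raised in question \ref{question2}, whereas your chain gets the equalities more economically. The two points of care you flag — choosing the Carath\'eodory preimages inside $\mathcal{S}$, and noting that a finitely supported measure automatically has finite $\expect{g(X)}_{\mu}$ so that it indeed lands in $\MSM{m+2}$ — are exactly the right ones, and your treatment of the case $\GSF=\varnothing$ (which forces $\MSF=\varnothing$ by your first step) matches the paper's trivial disposal of that case.
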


\begin{proof}
Recalling that $\GSF = \hull{\Gamma(\mathcal{S})} \cap \left(\{\phi\}\times\mathds{R}\right)$, the case $\GSF=\varnothing$ is trivially true. Moving on to $\GSF\neq\varnothing$, we'll give the proof for the supremum only, as the result for the infimum would follow from considering the supremum for $-g(X)$ instead of $g(X)$. For conciseness, let us denote
\[
\sigma = \sup_{\GSF} z\quad\mbox{and}\quad s = \sup_{\mu\in\MSF} \expect{g(X)}_{\mu}
\]
We first prove that $s \geq \sigma$. There exists a sequence $\left(P_k\right)_{k=1}^{\infty}$ in $\GSF$ such that $P_k = (\phi, z_k)$ and
\[
\lim_{k\rightarrow\infty} z_k = \sigma
\]
Using corollary \ref{co:existence} it follows that for every $k$ there exists $\mu_k\in\MSF$ such that $\expect{\Gamma(X)}_{\mu_k} = P_k$, hence
\[
\lim_{k\rightarrow\infty} \expect{g(X)}_{\mu_k} = \sigma
\]
implying $s\geq \sigma$ (note that this reasoning works even if $\sigma$ is $\infty$).

Next we show that $s \leq \sigma$. If $\sigma$ is $\infty$, there is nothing to prove, otherwise suppose by absurd that $s>\sigma$. It follows that there exists $\mu\in\MSF$ such that $g \equiv \expect{g(X)}_{\mu} > \sigma$. Let $P = \expect{\Gamma(X)}_{\mu}$. By definition of $\MSF$ we have $P = (\phi, g)$. At the same time, lemma \ref{conv-lemma-psi} implies $P\in\hull{\Gamma(\mathcal{S})}$ and hence $P\in\GSF$, but then the supremum $\sigma$ should be at least $g$ (since it is the $z$ coordinate of a point in $\GSF$). Contradiction!

Finally, using corollary \ref{co:existence}, for every $\mu\in\MSF$, we have $\expect{\Gamma(X)}_{\mu} \in \hull{\Gamma(\mathcal{S})}$. So we can use Caratheodory's theorem to build a measure $\widetilde{\mu}\in \MSM{m+2}$ such that $\expect{\Gamma(X)}_{\mu} = \expect{\Gamma(X)}_{\widetilde{\mu}}$ and hence such that $\expect{g(X)}_{\mu} = \expect{g(X)}_{\widetilde{\mu}}$, which completes the proof. \qed
\end{proof}

Note that if we supplement this theorem with corollary \ref{co:existence} we get that the possible values for $\expect{g(X)}_{\mu}$ are exactly the $z$ coordinates in $\GSF$, which provides us, at least in principle, with the answer of the problem we set to study. This can be put in the form
\begin{equation}
\inf_{\GSF} z \leq \expect{g(X)}_{\mu} \leq \sup_{\GSF} z
\label{eq:hull-ineq}
\end{equation}
which can be seen as a generalization of Jensen's inequality, as we will see from examples in the following sections. Furthermore, the equality in theorem \ref{hull-ineq} means that it can always be saturated when the bounds are finite (in the sense that we can find measures such that $\expect{g(X)}$ is arbitrarily close to the bounds and in some cases achieve equality).

Also, the fact that $\MSF$ and $\MSM{m+2}$ have the same extrema allows us to think of the problem of obtaining these bounds as an optimization over $\MSM{m+2}$, which we will explore later.

However, using convex hulls still obfuscates the results due to their complexity, so in the following sections (\ref{sec:geom-approach} and \ref{sec:numerics}) we develop tools to obtain the extrema in $\GSF$ more easily.

\subsection{Some examples}
\label{sec:examples}

Theorem \ref{hull-ineq}, that we just proved, tells us that studying $\hull{\Gamma(\mathcal{S})}$ gives us information about $\expect{g(X)}$. Before moving on we consider some examples where the convex hulls are easily accessible, to make things more explicit.

\subsubsection{Possible variances for a variable in an interval}

Consider the situation where we have a random variable with support contained in $\mathcal{S} = [a,b]$ and we are interested on the question of what are the possible values for $\vari{X}$ given $\expect{X}$. This is equivalent to bounding $\expect{X^2}$ given $\expect{X} = \lambda$, so we'd have $m=n=1$, $f(x) = x$, $g(x) = x^2$ (so $\Gamma(x) = (x, x^2)$) and $\phi = \lambda$. The convex hull of $\Gamma([a,b])$ can be obtained analytically (a qualitative graph of it is in figure \ref{fig:quad-example}). A fairly easy calculation leads us to

\[
(x,y)\in \hull{\Gamma([a,b])}\quad \Leftrightarrow\quad x^2 \leq y \leq (a+b) x - ab
\]

\begin{figure}[H]
\centering
\includegraphics[width=0.3\textwidth]{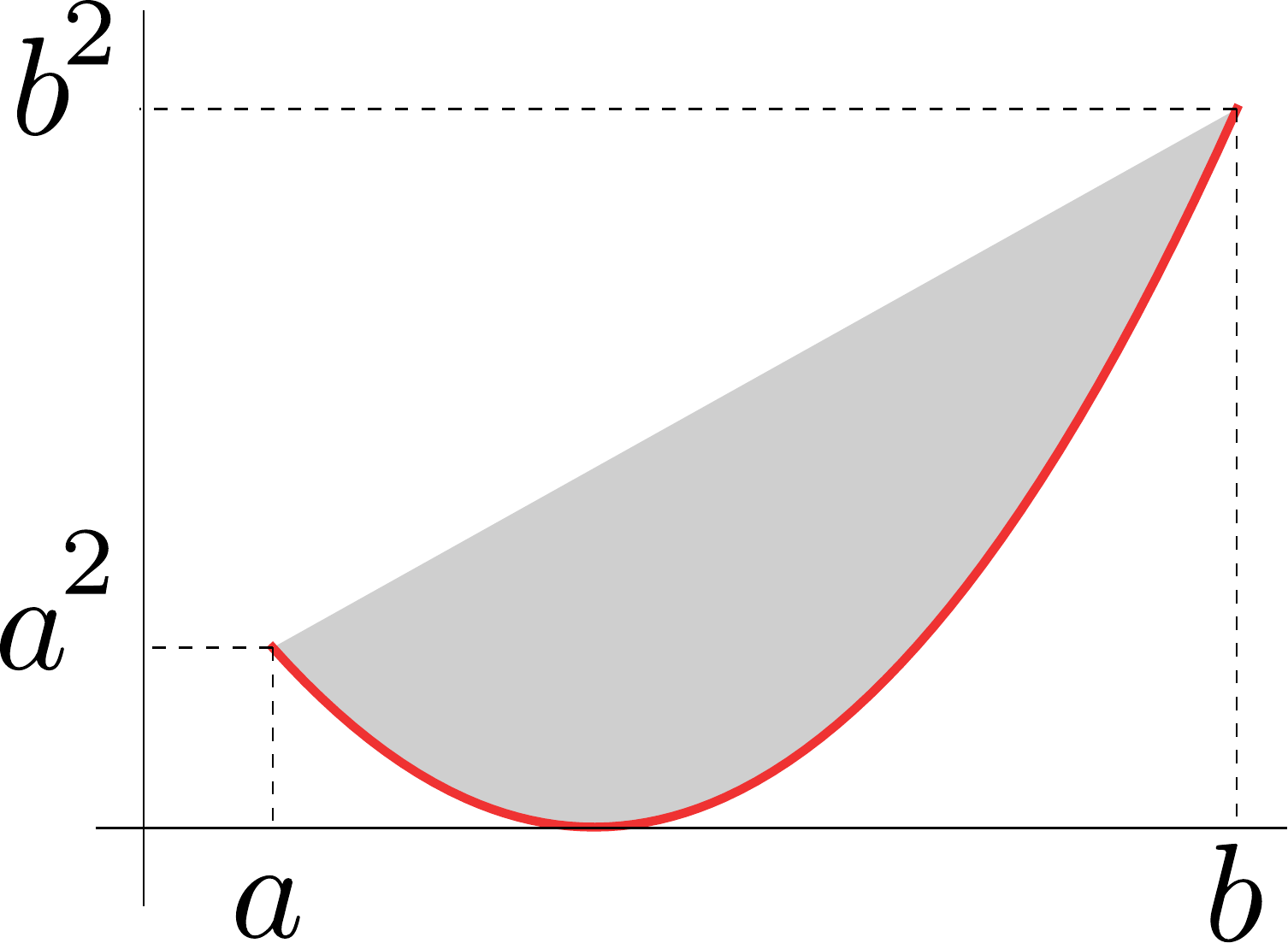}
\caption{$\Gamma(\mathcal{S})$ (in red) and its convex hull (in grey). All values of $(\expect{X}_{\mu}, \expect{X^2}_{\mu})$ for $\mu\in\MS$ lie in the hull.}
\label{fig:quad-example}
\end{figure}

Using what we just saw about the connection between convex hulls and expectations, this leads us to 
\[
\expect{X}^2 \leq \expect{X^2} \leq (a+b)\expect{X} - ab \quad \Leftrightarrow \quad \lambda^2 \leq \expect{X^2} \leq (a+b)\lambda - ab
\]
which is the bound one derives from theorem \ref{hull-ineq} in this case. Making the connection with the variance we get
\begin{equation}
0 \leq \vari{X} \leq (a+b)\lambda - \lambda^2 - ab = (b - \lambda)(\lambda - a)
\label{eq:vari-example}
\end{equation}

\subsubsection{A ``trivial'' generalization of Jensen's inequality}
\label{sec:jensen-example}

Consider the situation where we are given the expectation of the random vector $\expect{X}\in\mathds{R}^n$ and we want to study $\expect{g(X)}$. This is the case where $m=n$, $f_i(x) = x_i$, $\mathcal{S} = \mathds{R}^n$ and $\phi = \expect{X}$. In this case, $\Gamma(\mathcal{S})$ is the graph of $g(x)$:

\[
\mathcal{G} = \{(x,g(x))\,|\,x\in\mathds{R}^n\}
\]
whose convex hull obeys

\[
\hull{\Gamma(\mathcal{S})} = \hull{\mathcal{G}} \subseteq \clo{\hull{\mathcal{G}}} = \{(x,z)\,\vert\,x\in\mathds{R}^n\mbox{ and }\breve{g}(x) \leq z \leq \invbreve{g}(x)\}
\]
where $\breve{g},\invbreve{g}:\mathds{R}^n  \rightarrow\overline{\mathds{R}}$ are respectively the convex and concave envelopes of $g$.

Applying theorem \ref{hull-ineq} we get that

\begin{equation}
\breve{g}\left(\expect{X}_{\mu}\right) \leq \expect{g(X)}_{\mu} \leq \invbreve{g}\left(\expect{X}_{\mu}\right)
\label{eq:jensen-envelopes}
\end{equation}
which can be seen as a generalization of Jensen's inequality, beyond the convex/concave case. Note however that we could have proven this inequality directly from Jensen's inequality together with

\[
\breve{g}(x) \leq g\left(x\right) \leq \invbreve{g}(x)
\]
since $\breve{g}$ is convex and $\invbreve{g}$ is concave.

\section{Supporting Hyperplane Approach}
\label{sec:geom-approach}

Consider now the situation where one of the extrema
\[
\sigma_{-} = \inf_{\GSF} z\quad\mbox{or}\quad \sigma_{+} = \sup_{\GSF} z
\]
is finite. It follows that there'd exist a point $P = (\phi, \sigma)$ (either $(\phi, \sigma_-)$ or $(\phi, \sigma_+)$) in $\clo{\GSF}$. This point would be in the boundary of $\hull{\Gamma(\mathcal{S})}$ and hence there exists a supporting hyperplane passing through $P$ that divides the space in two parts, one of which has no intersection with $\Gamma(\mathcal{S})$. It turns out that the existence of these hyperplanes can help us finding $\sigma_{\pm}$ (and hence the bound in the expectation we are interested in)

\begin{theorem}
If $\mathcal{S} \subseteq \mathds{R}^n$, then there exist vectors $\alpha_{\pm} = (\alpha_{\pm}^1,$ $\ldots,$ $\alpha_{\pm}^m)$ and values $\beta_{\pm}, c_{\pm}$ such that $\beta_{\pm} \geq 0$, $(\alpha_{\pm}^1,$ $\ldots,$ $\alpha_{\pm}^m,$ $\beta_{\pm})$$\neq\vec{0}$ and

\begin{itemize}
\item If $\sigma_+$ is finite, then
\[
\innp{\alpha_+}{f(x)} + \beta_+g(x) + c_+ \leq 0\,\forall\, x\in\mathcal{S}\quad\quad\mbox{and}\quad
\sup_{\mu\in\MSF}\expect{\innp{\alpha_+}{f(X)} + \beta_+g(X) + c_+}_{\mu} = 0
\]
\item If $\sigma_-$ is finite, then
\[
\innp{\alpha_-}{f(x)} + \beta_-g(x) + c_- \geq 0\,\forall\, x\in\mathcal{S}\quad\quad\mbox{and}\quad
\inf_{\mu\in\MSF}\expect{\innp{\alpha_-}{f(X)} + \beta_-g(X) + c_-}_{\mu} = 0
\]
\item Moreover, for each case where $\sigma_{\pm}$ is finite, if $(\phi,\sigma_{\pm})\in\GSF$, then there exists a measure $\mu_{\pm}\in\MSM{m+1}$ with support $s_{\pm}$, such that
\[
\expect{g(X)}_{\mu_{\pm}} = \sigma_{\pm}\quad\mbox{and}\quad \innp{\alpha_{\pm}}{f(x)} + \beta_{\pm}g(x) + c_{\pm} = 0\,\forall\, x\in s_{\pm}
\]
\end{itemize}
\label{th:analytical}
\end{theorem}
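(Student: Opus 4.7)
The plan is to apply the supporting-hyperplane theorem to $\clo{\hull{\Gamma(\mathcal{S})}}$ at the boundary point $(\phi, \sigma_+)$, taking care to obtain a normal with nonnegative $z$-component. I focus on the $\sigma_+$ case since the $\sigma_-$ case follows by replacing $g$ with $-g$. By theorem \ref{hull-ineq}, $\sigma_+$ is a limit of $z$-coordinates of points in $\GSF \subseteq \hull{\Gamma(\mathcal{S})}$, so $(\phi,\sigma_+) \in \clo{\hull{\Gamma(\mathcal{S})}}$, while for every $\epsilon > 0$ the point $(\phi, \sigma_+ + \epsilon)$ is excluded from this closure (otherwise $\sigma_+$ would not be the supremum). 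Applying Hahn--Banach separation of $(\phi,\sigma_++\epsilon)$ from $\clo{\hull{\Gamma(\mathcal{S})}}$ gives a normalized hyperplane $\innp{\alpha_{\epsilon}}{y} + \beta_{\epsilon}z = c_{\epsilon}$ with $\|(\alpha_{\epsilon},\beta_{\epsilon})\|=1$ and the closed hull on the side $\leq c_{\epsilon}$. Evaluating at the interior point $(\phi,\sigma_+)$ and at the exterior point $(\phi, \sigma_++\epsilon)$ forces $\beta_{\epsilon}\epsilon > 0$, hence $\beta_{\epsilon} > 0$. Passing to a subsequential limit as $\epsilon\to 0$ by compactness of the unit sphere yields $(\alpha_+, \beta_+)\neq \vec{0}$ with $\beta_+\geq 0$ and a limiting $c_+$ such that $\innp{\alpha_+}{y}+\beta_+z+c_+\leq 0$ on $\clo{\hull{\Gamma(\mathcal{S})}}$ with equality at $(\phi,\sigma_+)$.

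The first claimed inequality is immediate since $\Gamma(x) \in \hull{\Gamma(\mathcal{S})}$. Integrating against $\mu\in\MSF$ and using $\innp{\alpha_+}{\phi} + \beta_+\sigma_+ + c_+ = 0$ gives $\expect{\innp{\alpha_+}{f(X)}+\beta_+g(X)+c_+}_{\mu} = \beta_+(\expect{g(X)}_{\mu} - \sigma_+)$. When $\beta_+ > 0$, this is nonpositive by theorem \ref{hull-ineq}, and its supremum over $\MSF$ equals $0$ because $\sup_{\mu}\expect{g(X)}_{\mu} = \sigma_+$. When $\beta_+ = 0$, the expression collapses to the constant $\innp{\alpha_+}{\phi} + c_+ = 0$ on all of $\MSF$, so the supremum is again $0$.

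For the last bullet, suppose $(\phi,\sigma_+) \in \GSF$, so it is genuinely in $\hull{\Gamma(\mathcal{S})}$. Take any convex-combination representation $(\phi,\sigma_+) = \sum_i \lambda_i \Gamma(x_i)$ with $\lambda_i > 0$ and $x_i \in \mathcal{S}$; since each summand satisfies $\innp{\alpha_+}{f(x_i)} + \beta_+ g(x_i) + c_+ \leq 0$ and the weighted sum vanishes, each summand must individually vanish, placing every $\Gamma(x_i)$ on the supporting hyperplane $\Pi$. Thus $(\phi, \sigma_+) \in \hull{\Gamma(\mathcal{S}) \cap \Pi}$, and since $\Pi$ is affine of dimension $m$, Caratheodory's theorem applied inside $\Pi$ furnishes a representation using at most $m+1$ points, yielding the probability measure $\mu_+ \in \MSM{m+1}$ with the required properties.

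The main obstacle is securing $\beta_{\pm}\geq 0$: an arbitrary supporting hyperplane at $(\phi,\sigma_+)$ may have any normal direction, and only those with nonnegative $z$-component encode a genuine upper bound for $\expect{g(X)}$. The separation-from-$(\phi,\sigma_++\epsilon)$ device together with the sphere-compactness limit is precisely what selects the correct sign. A subsidiary technicality is that $(\phi,\sigma_+)$ need not lie in $\hull{\Gamma(\mathcal{S})}$ itself (only in its closure), which is why the hyperplane must be produced through $\clo{\hull{\Gamma(\mathcal{S})}}$ and why the measure statement requires the explicit hypothesis $(\phi,\sigma_+)\in\GSF$.
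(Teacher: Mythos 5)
Your overall architecture is sound, and the second and third bullets are handled correctly: the computation $\expect{\cdot}_{\mu}=\beta_+(\expect{g(X)}_{\mu}-\sigma_+)$ together with $\sup_{\mu}\expect{g(X)}_{\mu}=\sigma_+$ is exactly right, and your Caratheodory-inside-$\Pi$ argument for the extremal measure is, if anything, cleaner than the paper's almost-sure argument. The gap is in the very first step: the claim that $(\phi,\sigma_++\epsilon)\notin\clo{\hull{\Gamma(\mathcal{S})}}$ for every $\epsilon>0$ ``otherwise $\sigma_+$ would not be the supremum.'' That inference is invalid: $\sigma_+$ is the supremum of $z$ over $\GSF=\hull{\Gamma(\mathcal{S})}\cap\left(\{\phi\}\times\mathds{R}\right)$, not over $\clo{\hull{\Gamma(\mathcal{S})}}\cap\left(\{\phi\}\times\mathds{R}\right)$, and intersecting a convex set with an affine line does not commute with taking closures unless the line meets the relative interior. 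Concretely, take $m=1$, $n=2$, $\mathcal{S}=\bigl(\,]0,1]\times[0,1]\bigr)\cup\{(0,0)\}$, $f(x_1,x_2)=x_1$, $g(x_1,x_2)=x_2$, $\phi=0$. Then $\hull{\Gamma(\mathcal{S})}=\mathcal{S}$ is already convex, $\GSF=\{(0,0)\}$, $\sigma_+=0$, and yet $(0,\epsilon)\in\clo{\hull{\Gamma(\mathcal{S})}}=[0,1]^2$ for all $\epsilon\leq 1$. Strict separation of $(\phi,\sigma_++\epsilon)$ from the closed hull is therefore impossible, so the inequality $\beta_{\epsilon}\epsilon>0$ has no hyperplane to apply to; worse, in this example every admissible hyperplane is forced to have $\beta_+=0$ (one checks $\alpha_+=-1$, $\beta_+=0$, $c_+=0$ works and nothing with $\beta_+>0$ does), so no device that manufactures $\beta_{\epsilon}>0$ can succeed.

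To close the gap you must treat separately the degenerate case in which the line $\{\phi\}\times\mathds{R}$ fails to meet the interior of $\hull{\Gamma(\mathcal{S})}$. This is precisely the branch the paper isolates: there one separates the line itself from the interior of the hull, obtains a supporting hyperplane containing the whole line (hence with $\beta_+=0$), and then uses the remaining freedom in the overall sign of the normal vector to orient the inequality toward $\leq 0$. When the line does meet the interior, your $\epsilon$-separation-plus-sphere-compactness device is a legitimate and rather elegant alternative to the paper's case analysis for securing $\beta_+\geq 0$, and the rest of your argument then goes through as written.
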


\begin{proof}
We will focus on the proof for $\sigma_+$ (the proof for $\sigma_-$ is analogous).

Let us prove the first item, starting with the inequality
\begin{equation}
\innp{\alpha_+}{f(x)} + \beta_+g(x) + c_+ \leq 0\,\forall\, x\in\mathcal{S}
\label{eq:ineq-proof}
\end{equation}

Since $\sigma_+$ is finite, then using theorem \ref{hull-ineq}, there exists a point $P_+ = (\phi, \sigma_+) \in \clo{\GSF}$. This point must be in the boundary of $\hull{\Gamma(\mathcal{S})}$, so there must exist a supporting hyperplane $\Pi$ containing it. This hyperplane divides the space into 2 regions, one of which has no intersection with $\hull{\Gamma(\mathcal{S})}$ (and hence with $\Gamma(\mathcal{S})$).

Algebraically, the hyperplane and the regions it defines can be specified (for every $y\in\mathds{R}^{m+1}$) as $\innp{a_+}{y} + c_+ = 0$ ($\Pi$ itself) and $\innp{a_+}{y} + c_+ > 0$, $\innp{a_+}{y} + c_+ < 0$ (the 2 regions), where $a_+\equiv (\alpha_+^1, \ldots, \alpha_+^m, \beta_+)\neq \vec{0}$. Note that the sign of $a_+$ is arbitrary (as we can change it and the sign of $c_+$ to get the same geometric locus) so without loss of generality we may assume $\beta_+ \geq 0$.

We will now consider some possibilities. If $\Gamma(\mathcal{S})$ is such that $\inter{\hull{\Gamma(\mathcal{S})}} = \varnothing$, then $\Pi$ can also be made such that $\hull{\Gamma(\mathcal{S})} \subseteq \Pi$, meaning that equality holds in (\ref{eq:ineq-proof}) for all points $x\in\mathcal{S}$ (and we are done). We will divide the case $\inter{\hull{\Gamma(\mathcal{S})}} \neq \varnothing$ in two other cases. Firstly, if $\beta_+=0$, we need to prove that we can choose $a_+$ and $c_+$ such that

\[
\innp{a_+}{\Gamma(x)} + c_+ = \innp{\alpha_+}{f(x)} + c_+ \leq 0\,\forall\, x\in\mathcal{S}
\]
But since $\beta_+=0$, this implies that the sign of $a_+$ has still not been determined. So the observation that $\innp{a_+}{y} + c_+$ has the same sign for all $y\in \Gamma(\mathcal{S})$ trivially implies that we can choose this sign to be negative.

Finally, for the case $\beta_+ > 0$, consider the line $\lambda = \{(\phi, t) \,|\, t\in\mathds{R}\}$. If $\lambda \cap \inter{\hull{\Gamma(\mathcal{S})}} = \varnothing$ then the separating hyperplane theorem tells us that there exists a hyperplane $\Xi$ separating $\lambda$ and $\inter{\hull{\Gamma(\mathcal{S})}}$. Since $P_+\in\lambda$, the distance between these two sets is 0, implying that $\Xi$ is also a supporting hyperplane containing $P_+$. Furthermore, it implies that $\lambda \subseteq \Xi$, so if we had used $\Xi$ instead of $\Pi$ to define $\alpha$ and $\beta$, we'd be back to the case $\beta_+ = 0$ that we proved already. On the other hand, if $\lambda$ and $\inter{\hull{\Gamma(\mathcal{S})}}$ are not disjoint, then $\lambda \cap \hull{\Gamma(\mathcal{S})}$ is not a singleton, implying that there are points of the form $(\phi, t)$ in $\hull{\Gamma(\mathcal{S})}$ with $t \neq \sigma_+$. These points must have $t < \sigma_+$ and hence the region defined by $\Pi$ where $\hull{\Gamma(\mathcal{S})}$ (and hence $\Gamma(\mathcal{S})$) resides is $\innp{a_+}{y} + c_+ \leq 0$. Therefore, $\innp{a_+}{\Gamma(x)} + c_+ \leq 0\,\forall\,x\in\mathcal{S}$. Expanding this we get the desired inequality:

\[
\innp{\alpha_+}{f(x)} + \beta_+g(x) + c_+ \leq 0\,\forall\, x\in \mathcal{S}
\]
(in the proof for $\sigma_-$ the only differences are that when $\beta=0$, we choose the sign to be positive and on the last step we have $t > \sigma_-$, which reverses the sign of the final inequality).

Next, we must prove that

\[
\sup_{\mu\in\MSF}\expect{\innp{\alpha_+}{f(x)} + \beta_+g(x) + c_+}_{\mu} = 0
\]
We first note that if $\mu\in\MSF$, then

\[
\expect{\innp{\alpha_+}{f(x)} + \beta_+g(x) + c_+}_{\mu} = \innp{\alpha_+}{\phi} + c_+ + \beta_+\expect{g(x)}_{\mu}
\]
Since $\beta_+ \geq 0$, this implies that

\[
\sup_{\mu\in\MSF}\expect{\innp{\alpha_+}{f(x)} + \beta_+g(x) + c_+}_{\mu} = \innp{\alpha_+}{\phi} + c_+ + \beta_+\sigma_+ = \innp{a_+}{P_+} + c_+
\]
which equals 0 because $P_+ \in \Pi$ (the reasoning is identical for $\sigma_-$).

Finally, let us prove the last item. Since $P_+ \in \GSF$, then $P_+ \in \hull{\Gamma(\mathcal{S})}$ and we can use Caratheodory's theorem to write $P_+$ as the convex combination of $m+1$ points in $\Gamma(\mathcal{S})$ ($m+2$ are not needed because we are in the boundary) and using an argument similar to the one in the proof of corollary \ref{co:existence}, there exists $\mu_+\in\MSM{m+1}$ with $\expect{\Gamma(X)}_{\mu_+} = P_+$. Hence $\expect{g(X)}_{\mu_+} = \sigma_+$.

Also,
\[
\expect{\innp{a_+}{\Gamma(X)} + c_+}_{\mu_+} = \innp{a_+}{\expect{\Gamma(X)}_{\mu_+}} + c_+ = \innp{a_+}{P_+} + c_+ = 0.
\]
Since $\innp{a_+}{\Gamma(x)} + c_+$ must have the same sign for all $x\in\mathcal{S}$ and its expectation is null, then $\mu_+$ must be such that $\innp{a_+}{\Gamma(X)} + c_+ = 0$ almost surely. Since the support $s_+$ of $\mu_+$ is finite, then $s_+ \subseteq \Pi$. Expanding $\innp{a_+}{\Gamma(x)}$ we get

\[
\innp{\alpha_+}{f(x)} + \beta_+g(x) + c_+ = 0\,\forall\, x\in s_+
\]
(once again the reasoning is identical for $\sigma_-$ and $\mu_-$) \qed
\end{proof}

An important situation where this theorem can be applied is when $\mathcal{S}$ is a compact in $\mathds{R}^n$ and the restriction of $\Gamma$ to $\mathcal{S}$ is continuous. In this case both $\sigma_{\pm}$ are finite and such that $(\phi, \sigma_{\pm})\in\GSF$ and as we will see in section \ref{ssec:examples}, studying the possibilities for $\mu_{\pm}$ will be very useful for finding the $\sigma_{\pm}$.
The next theorem will allow us to extend this use case, by studying progressive compact covers of $\mathcal{S}$, instead of $\mathcal{S}$ itself.

\begin{theorem}
Let $\left(\mathcal{S}_i\right)_{i=1}^{\infty}$ be a progressive cover of $\mathcal{S} \subseteq \mathds{R}^n$ and let $L_i$, $U_i$, $L_i^{(k)}$ and $U_i^{(k)}$ be defined as follows

\[
L_i\equiv\inf_{\mu\in M(\mathcal{S}_i,\phi)} \expect{g(X)}_{\mu}\quad\quad\quad U_i\equiv\sup_{\mu\in M(\mathcal{S}_i,\phi)} \expect{g(X)}_{\mu}
\]\[
L_i^{(k)}\equiv\inf_{\mu\in M_k(\mathcal{S}_i,\phi)} \expect{g(X)}_{\mu}\quad\mbox{and}\quad U_i^{(k)}\equiv\sup_{\mu\in M_k(\mathcal{S}_i,\phi)} \expect{g(X)}_{\mu}
\]
then

\[
\inf_{\mu\in \MSF} \expect{g(X)}_{\mu} = \lim_{i\rightarrow\infty} L_i \quad\quad\quad
\sup_{\mu\in \MSF} \expect{g(X)}_{\mu} = \lim_{i\rightarrow\infty} U_i
\]\[
\inf_{\mu\in \MSM{k}} \expect{g(X)}_{\mu} = \lim_{i\rightarrow\infty} L_i^{(k)} \quad\mbox{and}\quad
\sup_{\mu\in \MSM{k}} \expect{g(X)}_{\mu} = \lim_{i\rightarrow\infty} U_i^{(k)}
\]
\label{th:lim}
\end{theorem}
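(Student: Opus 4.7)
The plan is to establish each of the four limit identities by a sandwich argument: monotonicity plus the inclusion $M(\mathcal{S}_i,\phi) \subseteq \MSF$ gives one direction for free, and an approximation step based on finite-support measures supplies the other. First I would observe that the nesting $\mathcal{S}_i \subseteq \mathcal{S}_j$ for $i \leq j$ forces $M(\mathcal{S}_i,\phi) \subseteq M(\mathcal{S}_j,\phi) \subseteq \MSF$, and likewise with $M_k$ in place of $M$. Consequently $(L_i)$ and $(L_i^{(k)})$ are non-increasing while $(U_i)$ and $(U_i^{(k)})$ are non-decreasing in $i$, so the four limits exist in $\overline{\mathds{R}}$. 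Taking the sup/inf over $M(\mathcal{S}_i,\phi)$ inside the inclusion immediately yields, for every $i$,
\[
L_i \geq \inf_{\mu\in\MSF}\expect{g(X)}_{\mu} \quad\text{and}\quad U_i \leq \sup_{\mu\in\MSF}\expect{g(X)}_{\mu},
\]
and analogous bounds for $L_i^{(k)}$, $U_i^{(k)}$, which pass to the limit.

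For the reverse inequalities I would treat the supremum case; the infimum case is symmetric. The idea is that theorem \ref{hull-ineq} lets me replace $\MSF$ by $\MSM{m+2}$ without changing the supremum, so it suffices to approximate measures with support of cardinality at most $m+2$. Pick any $\mu \in \MSM{m+2}$ with finite support $s = \{x_1,\ldots,x_r\} \subseteq \mathcal{S}$, $r \leq m+2$. The defining property of a progressive cover supplies, for each $j$, an index $i_j$ with $x_j \in \mathcal{S}_{i_j}$; letting $I = \max_j i_j$, the nesting property gives $s \subseteq \mathcal{S}_I$. Therefore $\mu \in M_{m+2}(\mathcal{S}_I,\phi) \subseteq M(\mathcal{S}_I,\phi)$, so $U_I \geq \expect{g(X)}_{\mu}$ and hence $\lim_i U_i \geq \expect{g(X)}_{\mu}$ by monotonicity. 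Taking the supremum over $\mu \in \MSM{m+2}$ and invoking theorem \ref{hull-ineq} one last time gives $\lim_i U_i \geq \sup_{\mu \in \MSF}\expect{g(X)}_{\mu}$, completing the identity for $U_i$.

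The argument for $U_i^{(k)}$ is the same but simpler, since measures in $M_k(\mathcal{S},\phi)$ already have support of size at most $k$; no appeal to theorem \ref{hull-ineq} is needed to reduce to the finite-support case. The two infimum identities follow by applying the same reasoning to $-g$, or equivalently by noting that the proof of theorem \ref{hull-ineq} works symmetrically.

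The only potential obstacle is that an arbitrary $\mu \in \MSF$ need not have finite support, so one cannot directly hope to find an $i$ with $\supp{\mu} \subseteq \mathcal{S}_i$; however, theorem \ref{hull-ineq} sidesteps exactly this difficulty by guaranteeing that the extrema over $\MSF$ are already attained as limits over $\MSM{m+2}$, and a finite support is always swallowed by a single $\mathcal{S}_I$ because of the nesting built into the definition of a progressive cover.
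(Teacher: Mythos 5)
Your proof is correct and takes essentially the same approach as the paper: the nesting of the cover gives monotonicity and one inequality, and the other follows because any finite-support measure is absorbed into a single $\mathcal{S}_I$, with the reduction to finite supports handling the general case. The only cosmetic difference is that you invoke theorem \ref{hull-ineq} to pass to $\MSM{m+2}$, while the paper repeats that Caratheodory/lemma \ref{conv-lemma-psi} construction inline within an $\varepsilon$- (resp.\ $\delta$-) approximation argument.
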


\begin{proof}
Let us show that 
\[
\sup_{\mu\in \MSF} \expect{g(X)}_{\mu} = \lim_{i\rightarrow\infty} U_i
\]

Since $\left(\mathcal{S}_i\right)_{i=1}^{\infty}$ is a progressive cover, it follows that $M(\mathcal{S}_i,\phi) \subseteq M(\mathcal{S}_j,\phi) \subseteq \MSF$ whenever $i<j$. Firstly, this implies that the case $\MSF = \varnothing$ is such that $M(\mathcal{S}_i,\phi) = \varnothing\,\forall\, i$, hence the supremum and all the $U_i$ are $-\infty$ (proving this case). Secondly, if $\MSF \neq \varnothing$, then $\left(U_i\right)_{i=1}^{\infty}$ is non-decreasing.

Consider first the case where the supremum is finite:
\[
\sup_{\mu\in \MSF} \expect{g(X)}_{\mu} = \sigma
\]
It follows that for every $\varepsilon > 0$, there exists $\mu_{\varepsilon}\in\MSF$ such that

\[
\overline{g} = \expect{g(X)}_{\mu_{\varepsilon}} \geq \sigma - \varepsilon
\]
We must have then $\expect{\Gamma(X)}_{\mu_{\varepsilon}} = (\phi,\overline{g})$. Using lemma \ref{conv-lemma-psi} we have $(\phi,\overline{g}) \in \hull{\Gamma(\mathcal{S})}$. Using Caratheodory's theorem we can build then $\nu_{\varepsilon}\in\MSM{m+2}$ with $\expect{\Gamma(X)}_{\nu_{\varepsilon}} = (\phi,\overline{g})$. Let $s_{\varepsilon}$ be the support of $\nu_{\varepsilon}$. Since $s_{\varepsilon}\subseteq \mathcal{S}$ is finite and $\left(\mathcal{S}_i\right)_{i=1}^{\infty}$ is a progressive cover of $\mathcal{S}$, then there exists $N_{\varepsilon}$ such that for all $n > N_{\varepsilon}$ we have $s_{\varepsilon}\subseteq \mathcal{S}_n$, implying $\nu_{\varepsilon}\in M(\mathcal{S}_n,\phi)$ and hence

\[
U_n = \sup_{\mu\in M(\mathcal{S}_n,\phi)} \expect{g(X)}_{\mu} \geq \expect{g(X)}_{\nu_{\varepsilon}} = \overline{g}
\]
However, since $M(\mathcal{S}_n,\phi) \subseteq \MSF$, then $U_n\leq \sigma$, so $|U_n - \sigma| \leq \varepsilon$, implying the limit.

The case when the supremum is $\infty$ is similar. For all $\delta$, there exists $\mu_{\delta}\in\MSF$ such that 

\[
\expect{g(X)}_{\mu_{\delta}} \geq \delta
\]
Invoking lemma \ref{conv-lemma-psi} and Caratheodory's theorem we can once again find $\nu_{\delta}\in\MSM{m+2}$ with the same expectation $\expect{\Gamma(X)}$ than $\mu_{\delta}$. Once again, since the support of $\nu_{\delta}$ is finite, then there exists $N_{\delta}$ such that for all $n > N_{\delta}$, the support of $\nu_{\delta}$ is in $\mathcal{S}_n$, implying $U_n \geq \delta$ and hence that $U_n\rightarrow\infty$.

This concludes the proof, as the limit for the infimum follows from considering the limit of the supremum for $-g$ instead of $g$ and the reasoning for the case with finite support is nearly identical (substitute $M$ for $M_k$, $U_i$ for $U_i^{(k)}$ and instead of using lemma 2 together with Caratheodory's theorem to build $\nu_{\varepsilon}$ and $\nu_{\delta}$, we can just use $\mu_{\varepsilon}$ and $\mu_{\delta}$ in their places, as the supports are already finite). \qed
\end{proof}

\subsection{Some simple examples}
\label{ssec:examples}

To illustrate how to use theorems \ref{th:analytical} and \ref{th:lim}, we will first work out some examples where the bounds can also be derived by simpler methods.

\subsubsection{A case where $\Gamma$ is continuous}

Let us first find the lower bound for $\expect{X^4}$ given $\expect{X}=\lambda$ and $\vari{X}=\sigma^2>0$. This problem fits the framework we are developping. Namely we have $m=2$, $f_1(x) = x$, $f_2(x) = x^2$, $g(x) = x^4$, $\phi = (\lambda, \lambda^2 + \sigma^2)$ and $\mathcal{S} = \mathds{R}$. Theorem \ref{th:analytical} gives us the most information when $\Gamma(\mathcal{S})$ is compact, which is not the case. However, we can use theorem \ref{th:lim} and study a progressive compact cover of $\mathds{R}$ instead. We will consider a cover where all elements are intervals of the form $[-a,a]$ (what the particular cover is turns out to be unimportant). Since $\Gamma([-a,a])$ is compact, then the corresponding $\gamma([-a,a],\phi)$ will be compact and theorem 2 implies that there exists a measure in $M_3([-a,a], \phi)$ that attains the infimum of $\expect{X^4}_{\mu}$ for $\mu\in M([-a,a], \phi)$. The support of this measure consists of roots of $h(x)$, where

\[
h(x) \equiv \alpha_0 + \alpha_1 x + \alpha_2 x^2 + \beta x^4 \geq 0\,\,\forall\,\,x\in [-a,a]
\]
for some choice of constants $\beta, \alpha_k$, with $\beta \geq 0$. We note that the roots in $]-a,a[$ must be double roots, while $\pm a$ may be simple roots. As $\sigma > 0$ then the support must have more than one point and hence $h(x)$ must have more than one root. These constraints leave us with the possibilities found in figure \ref{fig:x4}, for the qualitative graph of $h(x)$.

\begin{figure}[hbtp]
\centering
\subfigure[]{\includegraphics[width=0.2\textwidth]{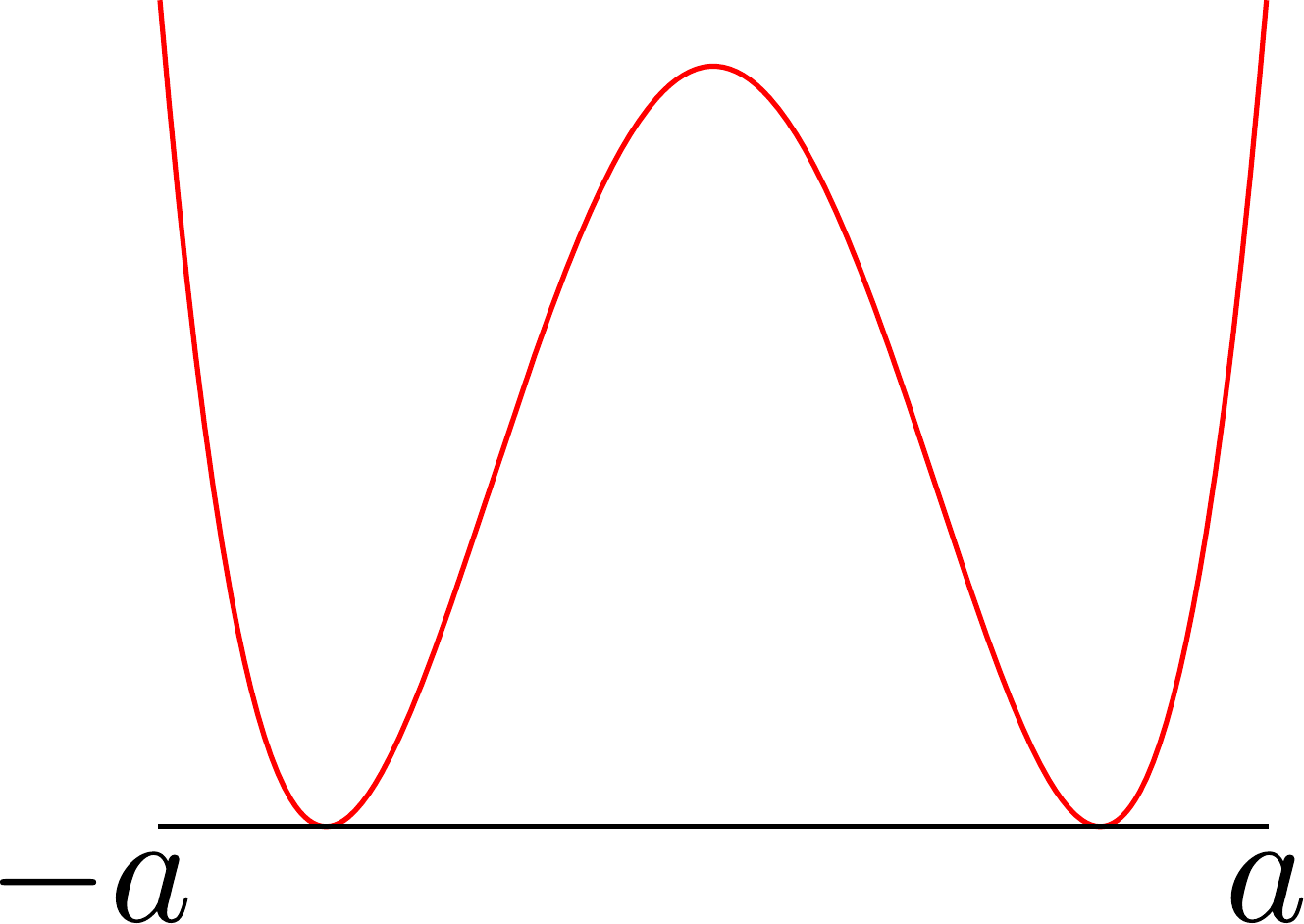}}\quad\quad
\subfigure[]{\includegraphics[width=0.2\textwidth]{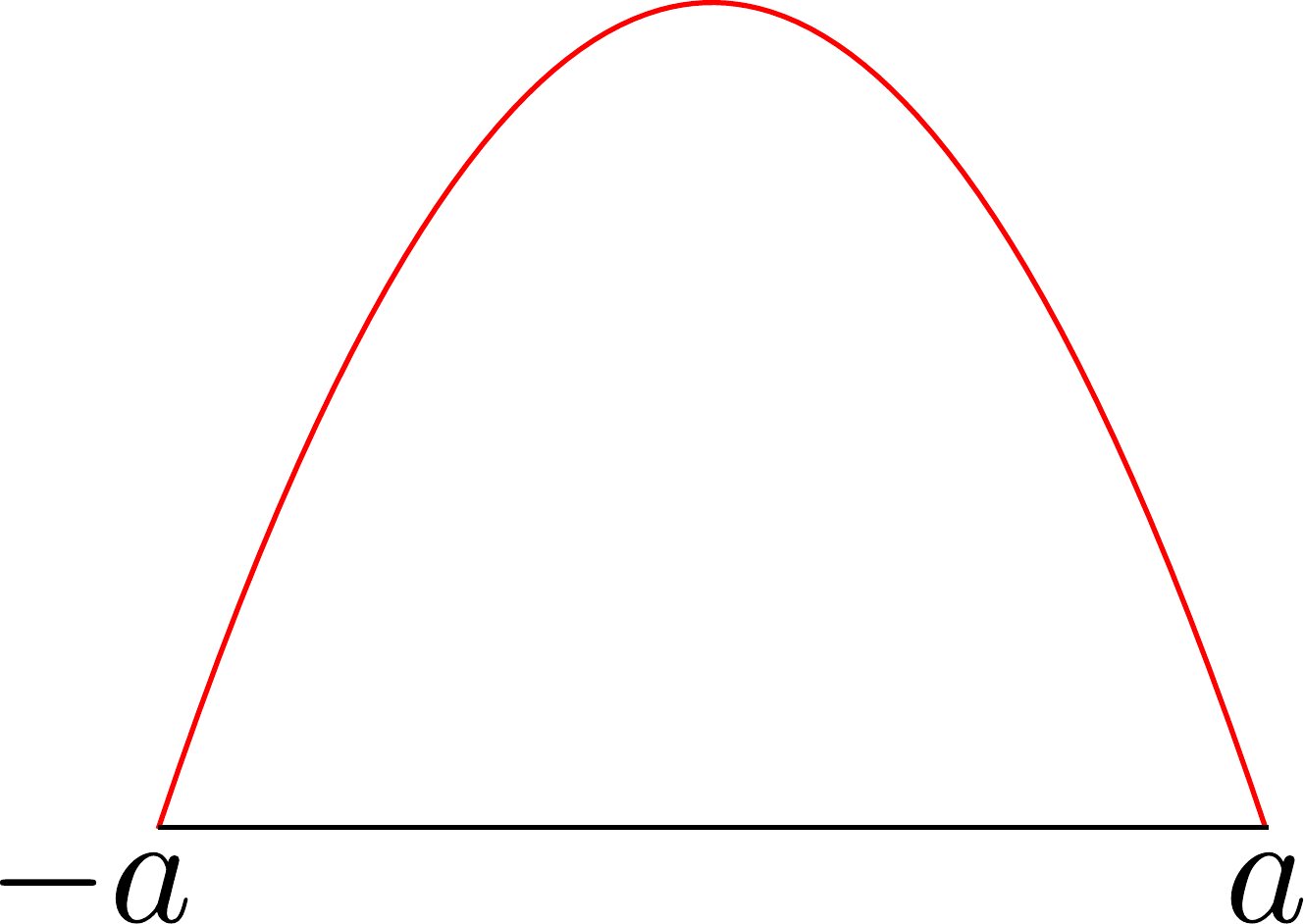}}\quad\quad
\subfigure[]{\includegraphics[width=0.2\textwidth]{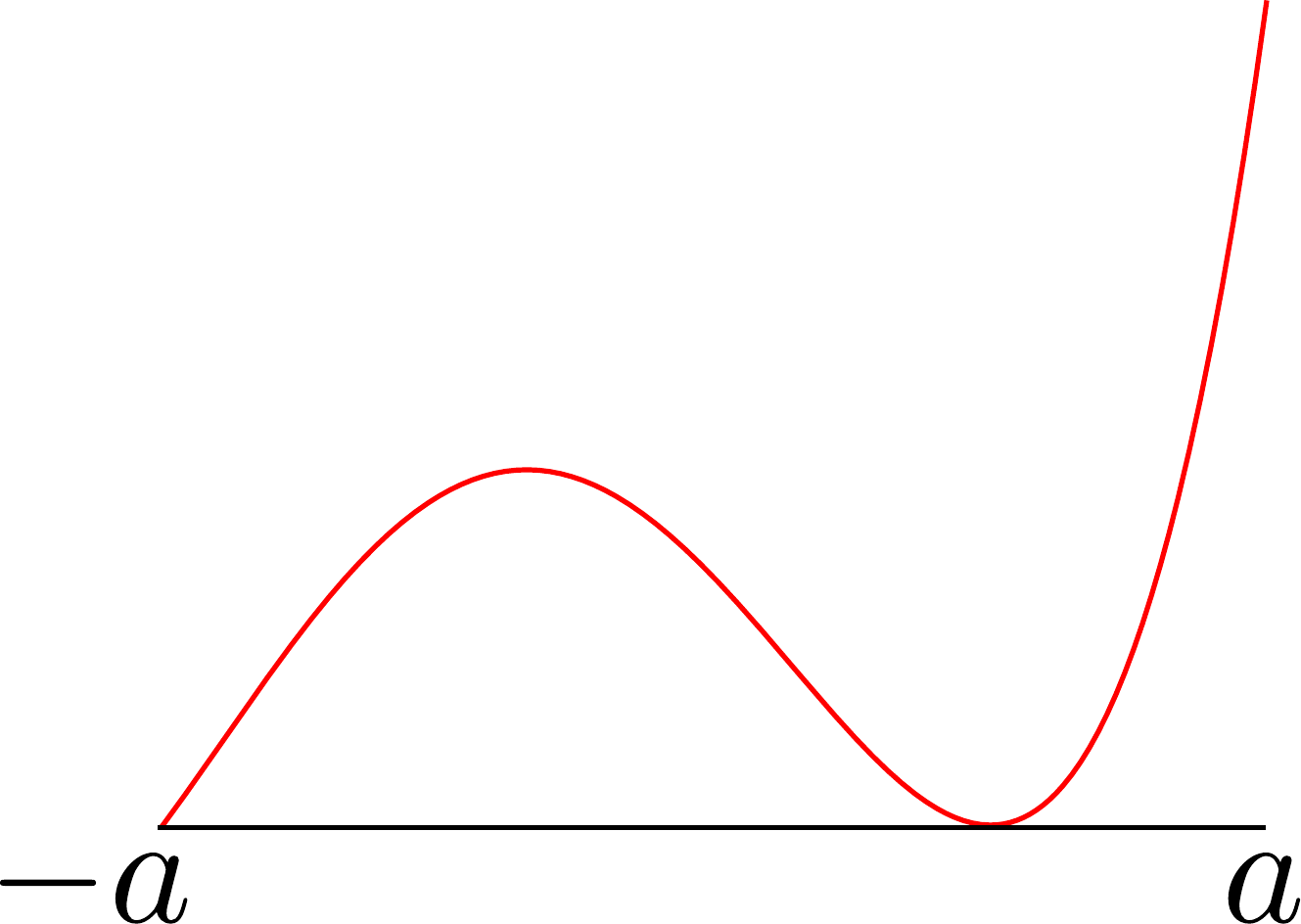}}\quad\quad
\subfigure[]{\includegraphics[width=0.2\textwidth]{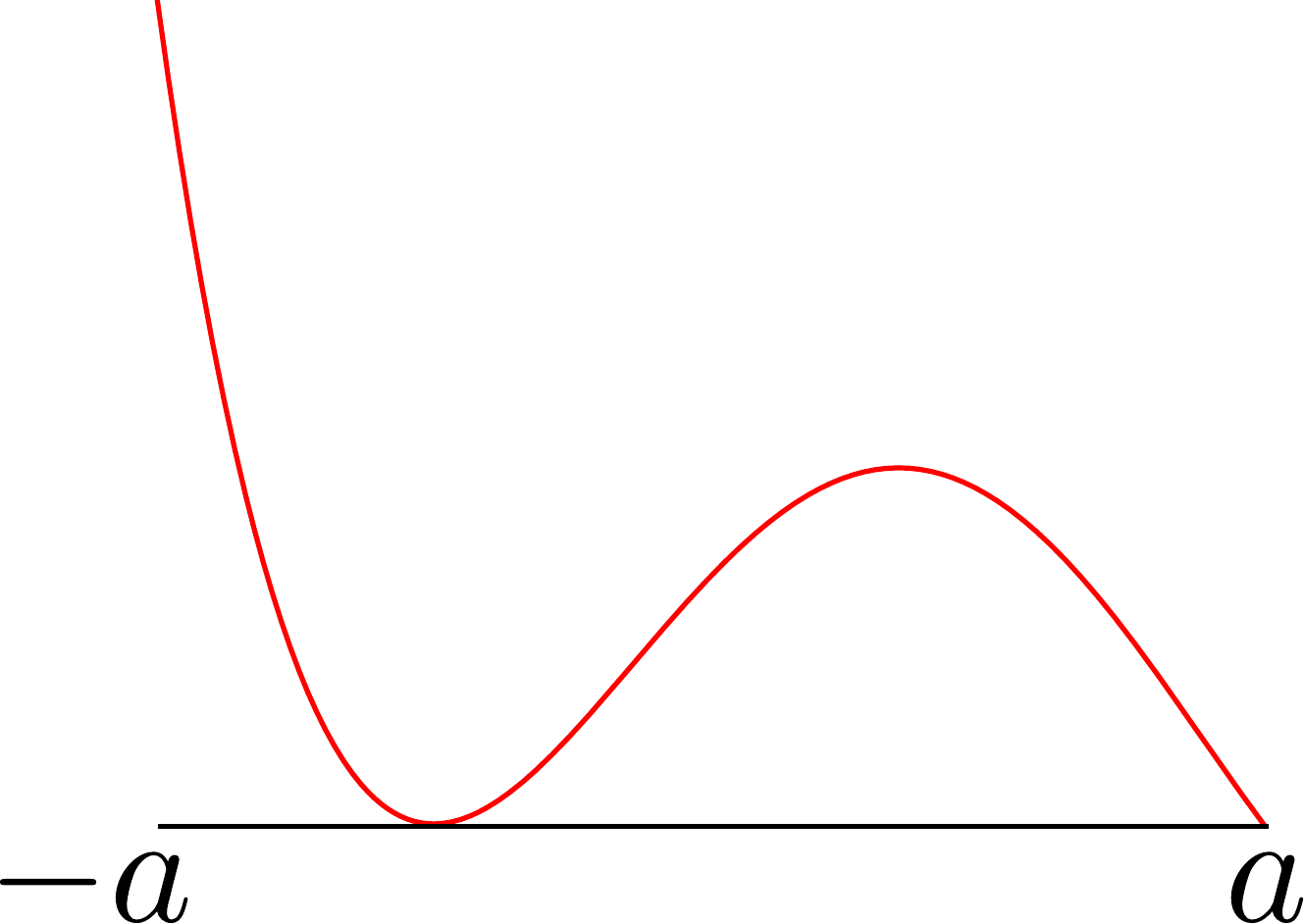}}
\caption{Possibilities for the qualitative graph of $h(x)$ respecting $h(x)\geq 0$ in $[-a,a]$ and with at least 2 roots in the interval.}
\label{fig:x4}
\end{figure}

Adding the constraint that the cubic term in $h(x)$ must be 0 allows us to discard the possibilities in \ref{fig:x4}(c) and (d), as the sum of the roots is necessarily different from zero in these cases (in \ref{fig:x4}(c), for example, the roots are $b < a$ with multiplicity 2, $-a$ with multiplicity 1 and $c \leq -a$ with multiplicity 1, so their sum taking the multiplicities into account must be negative). The sum of the roots being zero also implies that in the cases \ref{fig:x4}(a) and (b) the roots in the interval must be $\pm b$ for some $b \in [-a,a]$.

Imposing the known expectations, we get that $b = \sqrt{\lambda^2 + \sigma^2}$ and hence there exists only one measure obeying all the properties prescribed by theorem \ref{th:analytical} for every $a\geq b$ (the probabilities for $-b$ and $b$ are uniquely determined by $\lambda$), so it must be the one with the smallest possible expectation for $X^4$. Since this measure is the same for every $a\geq b$, the limit prescribed by theorem \ref{th:lim} is trivial and we have the bound

\[
\expect{X^4} \geq (\lambda^2 + \sigma^2)^2
\]
which can also be derived directly from Jensen's inequality

\[
\expect{(X^2)^2} \geq \expect{X^2}^2
\]

\subsubsection{Dealing with discontinuities (Markov's inequality)}

For the second example, lets rederive Markov's inequality. That is, if $X$ is a nonnegative random variable and $a>0$, then

\[
\prob{X > a} \leq \min\left\{\frac{\expect{X}}{a}, 1\right\}
\]
We can translate this into our framework, using $\mathcal{S} = \mathds{R}_+$,  $m=1$, $f_1(x) = x$, $\phi = \lambda$, $g(x) = \Theta(a-x)$, where $\Theta$ is the Heaviside step function (as defined in (\ref{eq:heaviside})) and we want to show that the lower bound for $\expect{g(X)} = \expect{\Theta(a-X)} = \prob{X \leq a}$ is $\max\left\{1 - \nicefrac{\lambda}{a}, 0\right\}$.

Once again, $\Gamma(\mathcal{S})$ is not compact, but we can use a progressive compact cover for $\mathcal{S}$. This cover must be built more carefully than in the previous example, to deal with the discontinuity of $g(x)$ at $x=a$. A possible choice is to use $\mathcal{S}_{n} = [0,a]\,\,\cup\,\, \left[a+\frac{1}{n}, n\right]$ for $n>a+1$ as the elements of the cover. Since the restriction of $\Gamma$ to any $\mathcal{S}_n$ is continuous, then $\Gamma\left(\mathcal{S}_{n}\right)$ is compact and we can once again use theorem \ref{th:analytical} to find a measure that minimizes $\expect{g(X)}$.

The support of the measure that minimizes $\expect{g(X)}$ is composed of roots of $h(x)$, where

\[
h(x) = \beta \Theta(a-x) + \alpha_1 x + \alpha_0 \geq 0
\]
for some choice of $\alpha_0, \alpha_1$ and $\beta$, such that $\beta \geq 0$. What the possible roots are will depend only on the sign of $\alpha_1$. Some representative graphs can be found in Fig \ref{fig:markov}. Taking $n > \lambda, a+1$; we have the following cases: 

\begin{description}
\item[(A)] If $\alpha_1 < 0$ then the only possible root is $n$ (Fig \ref{fig:markov}(a))
\item[(B)] If $\alpha_1 > 0$ then the only possible roots are $0$ and $a+\frac{1}{n}$ (Figs \ref{fig:markov}(b-d))
\item[(C)] If $\alpha_1 = 0$ then all values in the interval $\left[a+\frac{1}{n}, n\right]$ can be roots, as long as we also have $\alpha_0 = 0$ (Fig \ref{fig:markov}(e))
\end{description}

\begin{figure}[htb]
\centering
\subfigure[]{\includegraphics[width=0.18\textwidth]{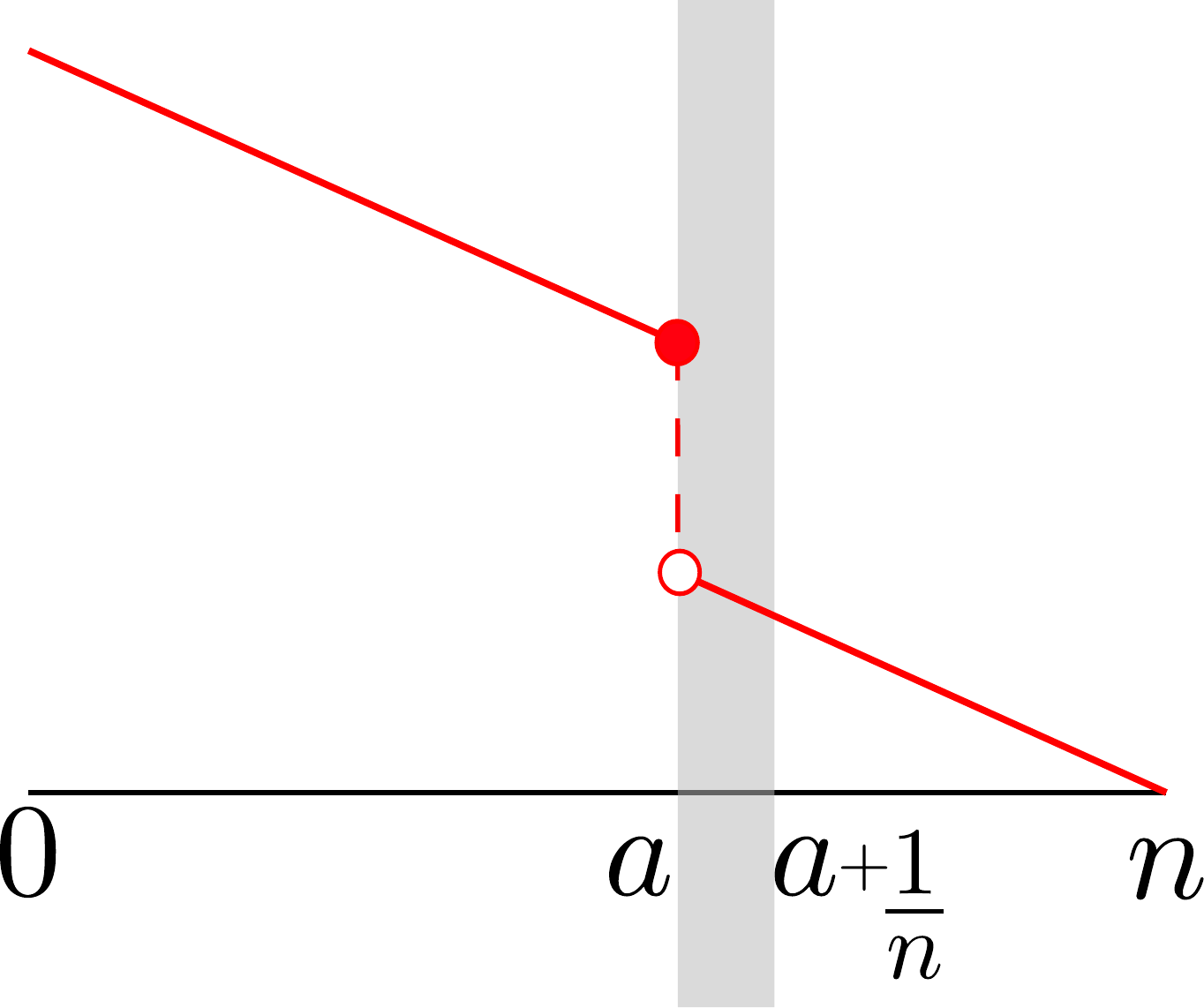}}\quad
\subfigure[]{\includegraphics[width=0.18\textwidth]{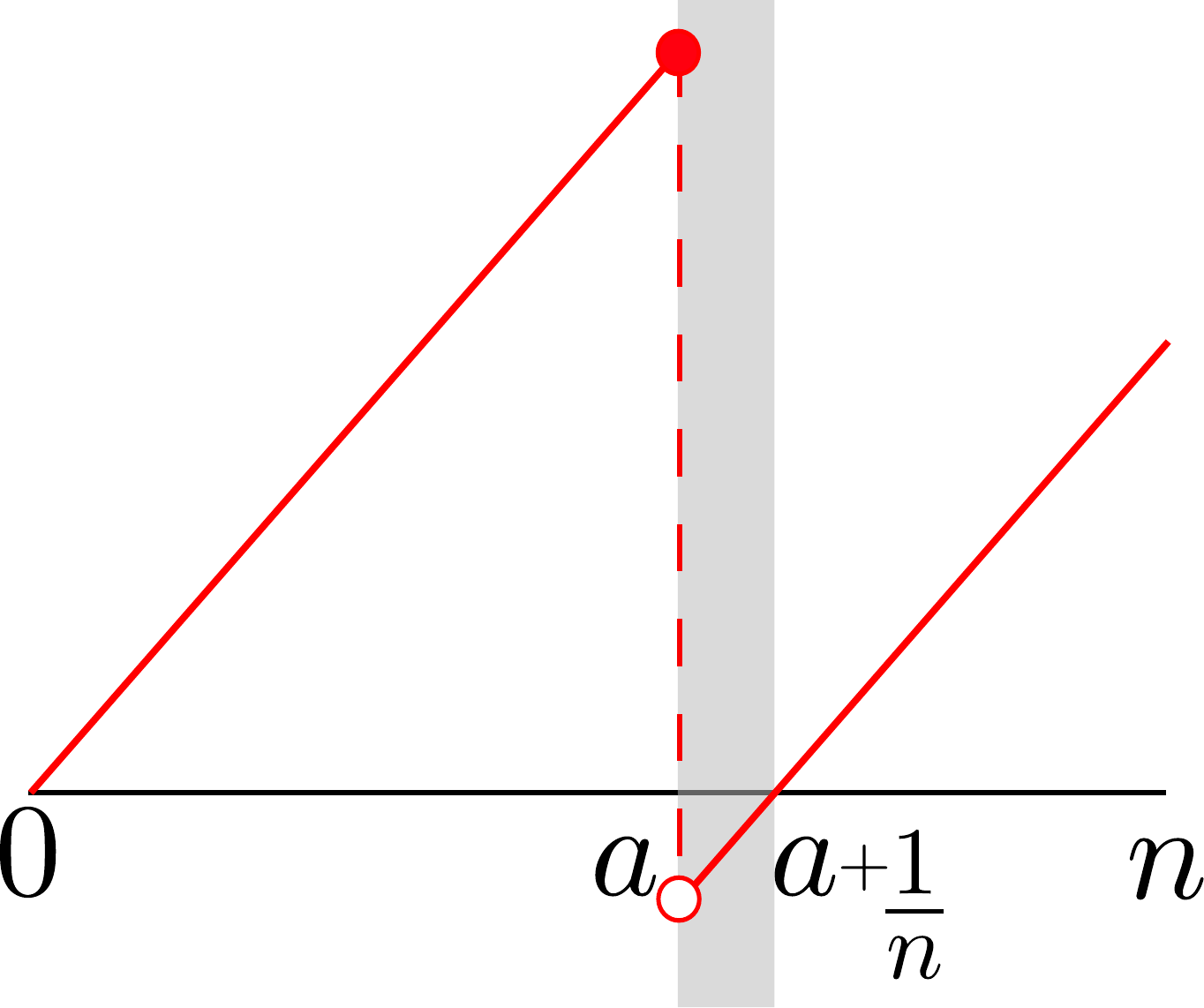}}\quad
\subfigure[]{\includegraphics[width=0.18\textwidth]{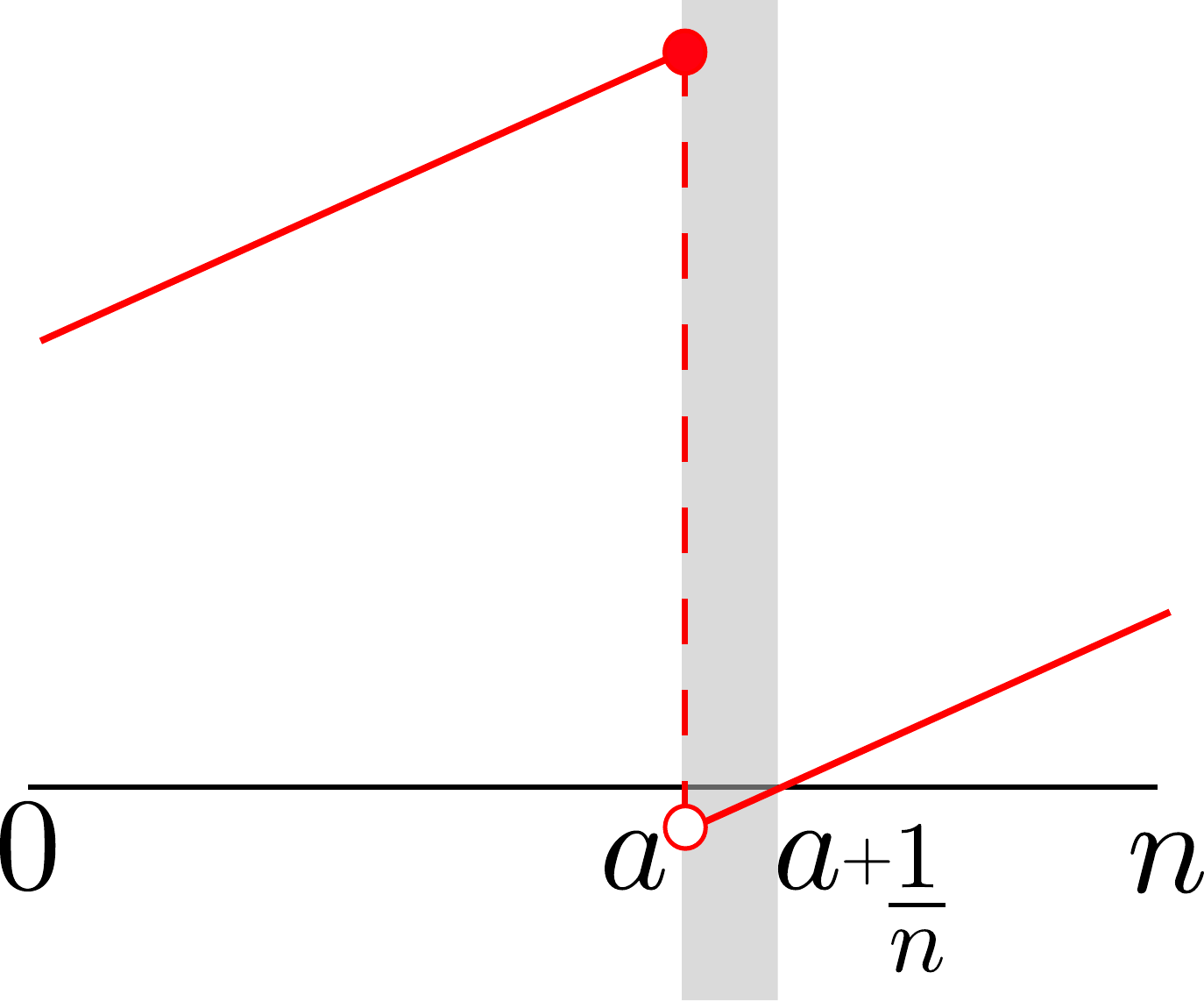}}\quad
\subfigure[]{\includegraphics[width=0.18\textwidth]{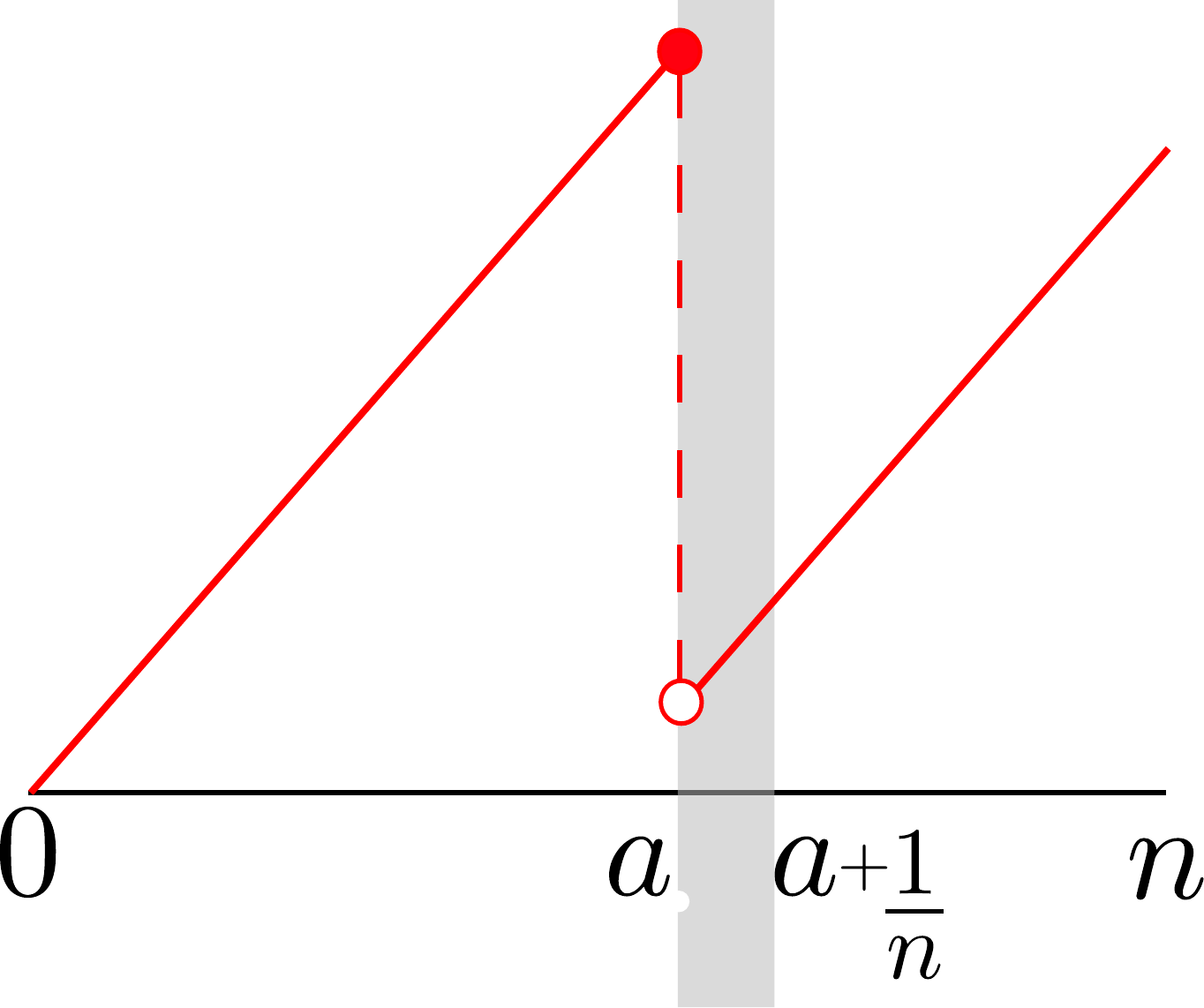}}\quad
\subfigure[]{\includegraphics[width=0.18\textwidth]{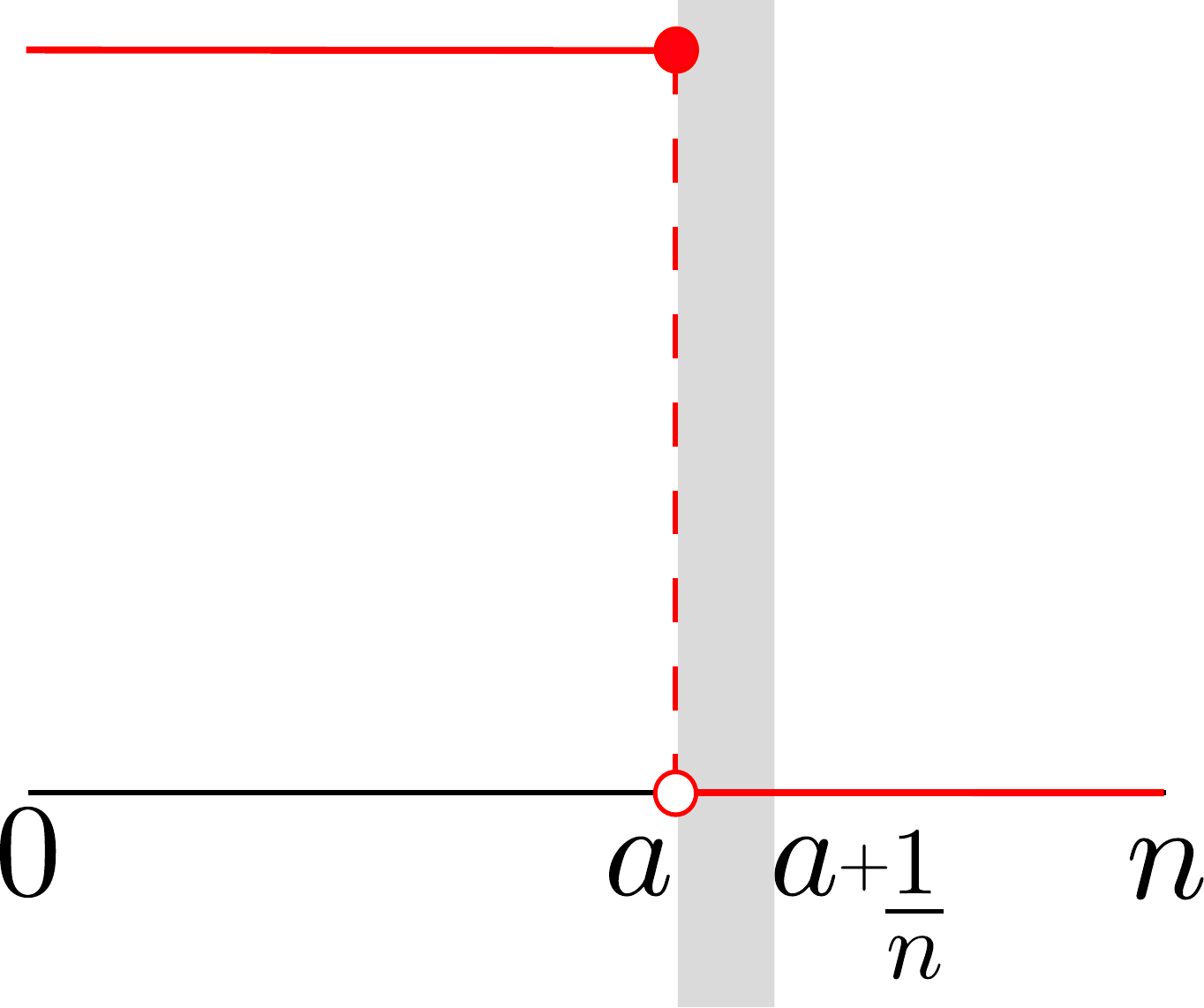}}
\caption{Possibilities for the graph of $h(x)$ respecting $h(x)\geq 0$ in $\mathcal{S}_n=[0,a]\cup [a+\nicefrac{1}{n},n]$ (the interval except the grayed out region).}
\label{fig:markov}
\end{figure}
Since $n > \lambda$, then case {\bf (A)} will be irrelevant, as it never obeys the constraints. If $\lambda \leq a$, then case {\bf (B)} is the only one that can obey the constraints and doing the algebra leads us to $\prob{X \leq a} = \expect{\Theta(a-X)} = 1 - \frac{n\lambda}{n a + 1}$. Finally, if $\lambda > a$, then case {\bf (C)} is the only one that can obey the constraints (which it does for $n>\nicefrac{1}{(\lambda - a})$) and we have trivially $\prob{X \leq a} = \expect{\Theta(a-X)} = 0$.

To obtain Markov's inequality we must use theorem \ref{th:lim} and take the limit $n\rightarrow\infty$, which is clearly

\[
\left.\lim_{n\rightarrow\infty} \expect{\Theta(a-X)}\right|_{\mathcal{S}_n} = \max\left\{1 - \frac{\lambda}{a}, 0\right\}
\]
completing the derivation.

\subsection{A note about the compact case}

The examples in section \ref{ssec:examples} highlight the importance of the case when $\mathcal{S} \subseteq \mathds{R}^n$ has a progressive compact cover. This raises the question of how to identify these situations, which fortunately is an easy one:

\begin{lemma}
$\mathcal{S} \subseteq \mathds{R}^n$ has a progressive compact cover iff it is an $F_{\sigma}$ set.
\label{lemma:Fs}
\end{lemma}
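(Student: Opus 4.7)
The plan is to prove both directions directly from the definitions, since both are elementary in $\mathds{R}^n$ (where compactness is equivalent to being closed and bounded).

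For the forward direction, suppose $\mathcal{S}$ has a progressive compact cover $(\mathcal{S}_i)_{i=1}^\infty$. Each $\mathcal{S}_i$ is compact in $\mathds{R}^n$, hence closed. The three defining properties of a progressive cover force $\mathcal{S} = \bigcup_{i=1}^{\infty} \mathcal{S}_i$: the second property gives the inclusion $\bigcup_i \mathcal{S}_i \subseteq \mathcal{S}$, and the third property gives the reverse inclusion. Therefore $\mathcal{S}$ is a countable union of closed sets, i.e.\ an $F_\sigma$ set.

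For the reverse direction, suppose $\mathcal{S} = \bigcup_{j=1}^{\infty} F_j$ with each $F_j$ closed in $\mathds{R}^n$. Define
\[
\mathcal{S}_i \;\equiv\; \bigcup_{j=1}^{i} \bigl(F_j \cap \overline{B(0,i)}\bigr),
\]
where $\overline{B(0,i)}$ is the closed ball of radius $i$ centered at the origin. I would then verify the four required properties: (i) each $\mathcal{S}_i$ is a finite union of closed and bounded sets, hence compact; (ii) $\mathcal{S}_i \subseteq \mathcal{S}_{i+1}$ by construction, since enlarging $i$ adds more sets to the union and enlarges the ball; (iii) $\mathcal{S}_i \subseteq \mathcal{S}$ because each $F_j \subseteq \mathcal{S}$; (iv) for any $x \in \mathcal{S}$, pick $j_0$ with $x \in F_{j_0}$ and $i_0$ with $\|x\| \leq i_0$, then $x \in \mathcal{S}_i$ for $i = \max(j_0, i_0)$.

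There is no real obstacle here; the only subtlety is choosing a construction that simultaneously handles two kinds of "growth" needed for compactness — the $F_\sigma$ decomposition handles being closed, while intersecting with growing closed balls handles boundedness — and then verifying that this dual truncation still covers each point of $\mathcal{S}$ eventually. Both steps are short, so I would present them as two short paragraphs in the final proof. \qed
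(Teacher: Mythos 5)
Your proposal is correct and follows essentially the same route as the paper: the forward direction is the same observation that the cover's union is $\mathcal{S}$, and your set $\mathcal{S}_i = \bigcup_{j=1}^{i}\bigl(F_j \cap \overline{B(0,i)}\bigr)$ is the paper's construction $\mathcal{S}_k = R_k \cap \bigcup_{q=1}^{k} F_q$ with closed balls in place of cubes. No gaps; the verification steps you list match the paper's.
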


\begin{proof}
If $\mathcal{S}$ has a progressive compact cover $(\mathcal{S}_k)_{k=1}^{\infty}$, then clearly $\mathcal{S}$ is $F_{\sigma}$, as

\[
\mathcal{S} = \bigcup_{k=1}^{\infty} \mathcal{S}_k
\]
If on the other hand $\mathcal{S}$ is an $F_{\sigma}$ set, then $\mathcal{S}$ can be written as

\[
\mathcal{S} = \bigcup_{k=1}^{\infty} F_k
\]
where the $F_k$ are all closed. Let $(R_k)_{k=1}^{\infty}$ be a progressive compact cover of $\mathds{R}^n$ (like $R_k = [-k, k]^n$, for example), then if we define

\[
\mathcal{S}_k = R_k \cap \bigcup_{q=1}^{k} F_q
\]
then $(\mathcal{S}_k)_{k=1}^{\infty}$ is a progressive compact cover of $\mathcal{S}$. \qed
\end{proof}
Which leads us to the following strengthening of theorem \ref{hull-ineq}

\begin{corollary}
If $\mathcal{S}$ is an $F_{\sigma}$ set in $\mathds{R}^n$ and $\Gamma$ is continuous, then
\[
\inf_{\mu\in \MSF} \expect{g(X)}_{\mu} = \inf_{\mu\in \MSM{m+1}} \expect{g(X)}_{\mu}
\]\[
\sup_{\mu\in \MSF} \expect{g(X)}_{\mu} = \sup_{\mu\in \MSM{m+1}} \expect{g(X)}_{\mu}
\]
\label{cor:Fs}
\end{corollary}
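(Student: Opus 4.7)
The plan is to combine Lemma \ref{lemma:Fs}, Theorem \ref{th:analytical} and Theorem \ref{th:lim}: the $F_\sigma$ hypothesis gives access to a progressive compact cover, on each piece of which Theorem \ref{th:analytical} provides extremal measures with at most $m+1$ support points, and Theorem \ref{th:lim} lets us pass to the limit.

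Concretely, by Lemma \ref{lemma:Fs} there exists a progressive compact cover $(\mathcal{S}_i)_{i=1}^\infty$ of $\mathcal{S}$. Fix $i$ and assume first $M(\mathcal{S}_i,\phi)\neq\varnothing$. Since $\Gamma$ is continuous and $\mathcal{S}_i$ is compact, $\Gamma(\mathcal{S}_i)$ is compact, hence $\hull{\Gamma(\mathcal{S}_i)}$ is compact in $\mathds{R}^{m+1}$ (the convex hull of a compact set in finite dimensions is compact, by Caratheodory). Intersecting with the closed line $\{\phi\}\times\mathds{R}$ shows $\gamma(\mathcal{S}_i,\phi)$ is compact and nonempty (nonemptyness coming from $M(\mathcal{S}_i,\phi)\neq\varnothing$ via Theorem \ref{hull-ineq}). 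Therefore both
\[
\sigma_\pm^{(i)} \equiv \sup/\inf_{\gamma(\mathcal{S}_i,\phi)} z
\]
are finite and attained, so $(\phi,\sigma_\pm^{(i)})\in\gamma(\mathcal{S}_i,\phi)$.

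The third bullet of Theorem \ref{th:analytical}, applied with $\mathcal{S}$ replaced by $\mathcal{S}_i$, then yields measures $\mu_\pm^{(i)}\in M_{m+1}(\mathcal{S}_i,\phi)$ with $\expect{g(X)}_{\mu_\pm^{(i)}}=\sigma_\pm^{(i)}$. Combined with Theorem \ref{hull-ineq} (which guarantees $\sigma_\pm^{(i)}=\sup/\inf_{\mu\in M(\mathcal{S}_i,\phi)} \expect{g(X)}_\mu$), this gives $L_i = L_i^{(m+1)}$ and $U_i = U_i^{(m+1)}$ in the notation of Theorem \ref{th:lim}. The case $M(\mathcal{S}_i,\phi)=\varnothing$ is immediate, since then $M_{m+1}(\mathcal{S}_i,\phi)=\varnothing$ as well and both quantities take the same extended value.

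Finally, by Theorem \ref{th:lim},
\[
\inf_{\mu\in\MSF}\expect{g(X)}_\mu = \lim_{i\to\infty} L_i = \lim_{i\to\infty} L_i^{(m+1)} = \inf_{\mu\in\MSM{m+1}}\expect{g(X)}_\mu,
\]
and the same reasoning with suprema gives the second identity. The step I expect to require the most care is verifying compactness of $\gamma(\mathcal{S}_i,\phi)$ with $(\phi,\sigma_\pm^{(i)})$ genuinely in $\gamma$, not merely its closure, since only under that condition does the third bullet of Theorem \ref{th:analytical} deliver an actual realizing measure on at most $m+1$ atoms; continuity of $\Gamma$ on the compact $\mathcal{S}_i$ is precisely what is needed to secure this.
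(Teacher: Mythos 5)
Your proposal is correct and follows essentially the same route as the paper: Lemma \ref{lemma:Fs} supplies the progressive compact cover, continuity of $\Gamma$ on each compact piece makes $\gamma(\mathcal{S}_i,\phi)$ compact so the third bullet of Theorem \ref{th:analytical} (together with Theorem \ref{hull-ineq}) yields $L_i=L_i^{(m+1)}$ and $U_i=U_i^{(m+1)}$, and Theorem \ref{th:lim} passes to the limit. Your treatment is only slightly more explicit than the paper's (attainment of $\sigma_\pm^{(i)}$, the empty case, and handling the supremum directly rather than via $-g$), but the argument is the same.
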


\begin{proof}
This is trivially true if $\MSF=\varnothing$. Otherwise, lemma \ref{lemma:Fs} implies that $\mathcal{S}$ has a progressive compact cover $(\mathcal{S}_k)_{k=1}^{\infty}$. Since $\Gamma$ is continuous, then $\gamma(\mathcal{S}_k, \phi)$ is always compact and using theorems \ref{th:analytical} and \ref{th:lim} we get
\[
\inf_{\mu\in \MSF} \expect{g(X)}_{\mu} = \lim_{k \rightarrow \infty}  \left(\inf_{\mu\in M(\mathcal{S}_k,\phi)} \expect{g(X)}_{\mu}\right) = 
\]\[
= \lim_{k \rightarrow \infty} \left(\inf_{\mu\in M_{m+1}(\mathcal{S}_k,\phi)} \expect{g(X)}_{\mu}\right) = \inf_{\mu\in \MSM{m+1}} \expect{g(X)}_{\mu}
\]
The proof for the supremum follows from considering the infimum for $-g$ instead of $g$. \qed
\end{proof}

\section{Application to the Jensen gap problem}
\label{sec:jensen-gap}

The examples in the previous section were meant to familiarize the reader with this method of obtaining bounds (study how the roots can be distributed, then apply the constraints to find candidates for the measure extremizing the expectation we are interested in), by presenting situations where the results could also be obtained by more familiar methods. In the next sections we use theorems \ref{th:analytical} and \ref{th:lim} to obtain novel contributions to the problem of finding bounds for the Jensen gap. In particular we will be investigating bounds for $\expect{g(X)}$ in the case where $\expect{X}$ and $\vari{X}$ are given.

\subsection{If $g'(x)$ is strictly convex}
\label{sec:jensen-gap-conv}

\begin{theorem}
Let $X$ be a random variable with support contained in $[a,b]$ and let $g:[a,b] \rightarrow \mathds{R}$ be bounded, differentiable and such that $g'(x)$ is strictly convex. Then for every $\lambda$ and $\sigma^2 > 0$ that are possible values for the average and variance of a variable in $[a,b]$ (which amounts to $\lambda\in\,]a,b[$ and $\sigma^2 \leq (b-\lambda)(\lambda-a)$), there exist measures $\mu_{\pm}$ with
\[
\expect{X}_{\mu_{\pm}} = \lambda\quad\quad\mbox{and}\quad\quad \vari{X}_{\mu_{\pm}} = \sigma^2
\]
such that
\[
\expect{g(X)}_{\mu_-} = \frac{\sigma^2 g(a) + (\lambda - a)^2 g\left(\lambda + \frac{\sigma^2}{\lambda - a}\right)}{\sigma^2 + (\lambda - a)^2}
\]\[
\expect{g(X)}_{\mu_+} = \frac{\sigma^2 g(b) + (\lambda - b)^2 g\left(\lambda + \frac{\sigma^2}{\lambda - b}\right)}{\sigma^2 + (\lambda - b)^2}
\]
and for every measure $\mu$ in $M([a,b])$, with the same average and variance, we have
\[
\expect{g(X)}_{\mu_-} \leq \expect{g(X)}_{\mu} \leq \expect{g(X)}_{\mu_+}
\]
\label{th:dg-strict-convex}
\end{theorem}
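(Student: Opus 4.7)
The plan is to apply Theorem \ref{th:analytical} with $m=2$, $f_1(x)=x$, $f_2(x)=x^2$, $\phi=(\lambda,\lambda^2+\sigma^2)$ and $\mathcal{S}=[a,b]$. Since $g$ is continuous (because differentiable) and $[a,b]$ is compact, $\Gamma$ is continuous on $\mathcal{S}$ and $\hull{\Gamma([a,b])}$ is compact; in particular $\sigma_\pm$ are finite and $(\phi,\sigma_\pm)\in\GSF$. The third bullet of Theorem \ref{th:analytical} then yields measures $\mu_\pm\in M_3([a,b],\phi)$ whose supports lie in the zero set of a function of the form
\[
h(x)=\alpha_2 x^2+\alpha_1 x+c+\beta g(x),\qquad \beta\geq 0,
\]
with $h\leq 0$ on $[a,b]$ for $\mu_+$ and $h\geq 0$ on $[a,b]$ for $\mu_-$.

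The central structural observation is that $h'(x)=2\alpha_2 x+\alpha_1+\beta g'(x)$ inherits strict convexity from $g'$ as soon as $\beta>0$, and hence has at most two zeros $\xi_1<\xi_2$, being positive on $\mathds{R}\setminus[\xi_1,\xi_2]$ and negative on $(\xi_1,\xi_2)$. Consequently $h$ is strictly monotone on each of $(-\infty,\xi_1]$, $[\xi_1,\xi_2]$ and $[\xi_2,\infty)$, with a local maximum at $\xi_1$ and a local minimum at $\xi_2$. An interior zero of $h$ is necessarily a local minimum when $h\geq 0$, and a local maximum when $h\leq 0$, so the only possible interior zero is $\xi_2$ for $\mu_-$ or $\xi_1$ for $\mu_+$; any further support point must lie in $\{a,b\}$. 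When $\beta=0$, $h$ collapses to a quadratic and a direct check shows that its zero set inside $[a,b]$ is either a single point, empty, or the pair $\{a,b\}$, the last being possible only for $\mu_+$ and only in the extreme case $\sigma^2=(b-\lambda)(\lambda-a)$.

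A monotonicity check then eliminates the remaining configurations: for $\mu_-$, a support $\{\xi_2,b\}$ with $\xi_2\in(a,b)$ would require $h(b)=0$, but $h$ is strictly increasing on $[\xi_2,b]$ with $h(\xi_2)=0$, forcing $h(b)>0$, a contradiction; the symmetric computation on $[a,\xi_1]$ rules out $\{a,\xi_1\}$ for $\mu_+$, and three-point configurations like $\{a,\xi_2,b\}$ are excluded for the same reason. Since $\sigma^2>0$ forces each $\mu_\pm$ to have at least two support points, we conclude that $\mu_-$ is supported on $\{a,y_-\}$ and $\mu_+$ on $\{y_+,b\}$, possibly degenerating to $\{a,b\}$ at the extreme variance. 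Imposing $\expect{X}=\lambda$ and $\vari{X}=\sigma^2$ determines $y_\pm$ and the two probabilities uniquely, and a direct algebraic computation gives $y_-=\lambda+\sigma^2/(\lambda-a)$, $y_+=\lambda+\sigma^2/(\lambda-b)$ together with the stated formulas for $\expect{g(X)}_{\mu_\pm}$. The two-sided inequality for every $\mu\in M([a,b],\phi)$ is then immediate from Theorem \ref{hull-ineq}, which identifies $\sigma_\pm$ with the extrema of $\expect{g(X)}_\mu$ over $M([a,b],\phi)$. The main obstacle is the combinatorial classification of the possible zero patterns of $h$ under the sign constraint; once these are narrowed to the two advertised two-point supports, the rest is elementary algebra.
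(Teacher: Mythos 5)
Your proof is correct and follows essentially the same route as the paper: invoke Theorem \ref{th:analytical} on the compact interval to reduce to an extremal measure in $M_3([a,b],\phi)$ supported on the zero set of $h=\beta g$ plus a quadratic, use the strict convexity of $g'$ to pin down the qualitative shape of $h$ and hence force the support to be $\{a,c\}$ (resp.\ $\{d,b\}$), and then solve the moment system. Where the paper argues from qualitative graphs, you make the shape analysis explicit via the strict convexity of $h'$; your only slip is the claim that the $\{a,b\}$ zero set in the $\beta=0$ case occurs only for $\mu_+$ (it can occur for $\mu_-$ as well), which is harmless since that configuration forces the extremal variance and then coincides with the stated supports.
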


\begin{proof}
We will focus on the lower bound, as the proof for the upper bound is analogous. We have $\mathcal{S} = [a,b]$, $f_1(x) = x$, $f_2(x) = x^2$ and $\phi=(\lambda, \lambda^2 + \sigma^2)$. Since $\Gamma$ is continuous and $\mathcal{S}$ is compact, then theorem \ref{th:analytical} implies that there exists a measure $\mu \in M_{3}([a,b],\phi)$ that minimizes $\expect{g(X)}_{\mu}$. The support of $\mu$ consists of roots of $h(x) \equiv \beta g(x) + \alpha_0 + \alpha_1 x + \alpha_2 x^2$ for some choice of $\alpha_0, \alpha_1, \alpha_2$ and $\beta$, such that $\beta \geq 0$ and $h(x) \geq 0$ in $[a,b]$. Furthermore, $\sigma > 0$, so there must be more than one point in the support. Using that $g'$ is strictly convex and $\beta \geq 0$, we can obtain all possibilities for the qualitative graph of $h(x)$ (Figs \ref{fig:proof-dsc}(a, b))

\begin{figure}[H]
\centering
\subfigure[]{\includegraphics[width=0.2\textwidth]{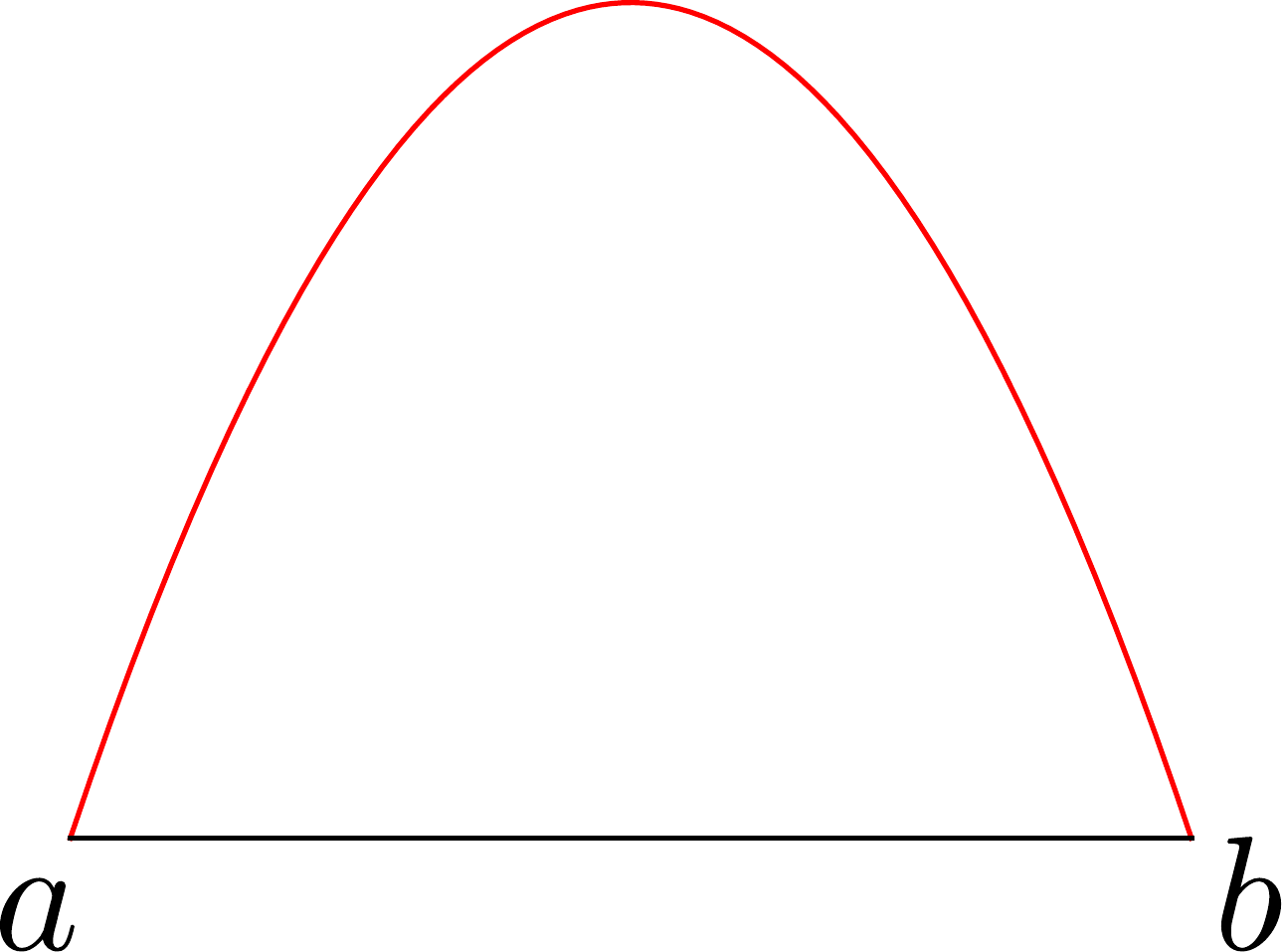}}\quad\quad
\subfigure[]{\includegraphics[width=0.2\textwidth]{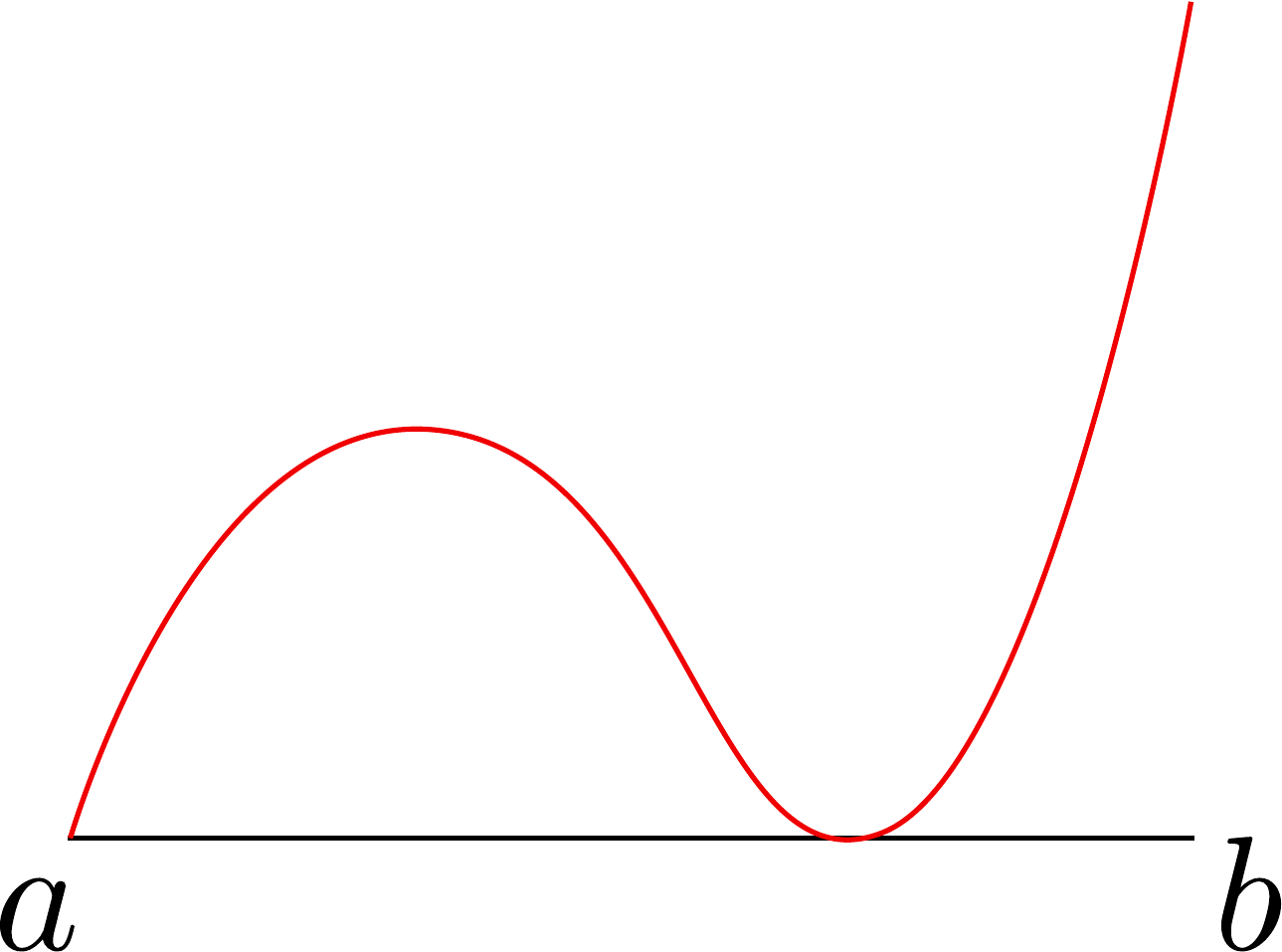}}\quad\quad
\subfigure[]{\includegraphics[width=0.2\textwidth]{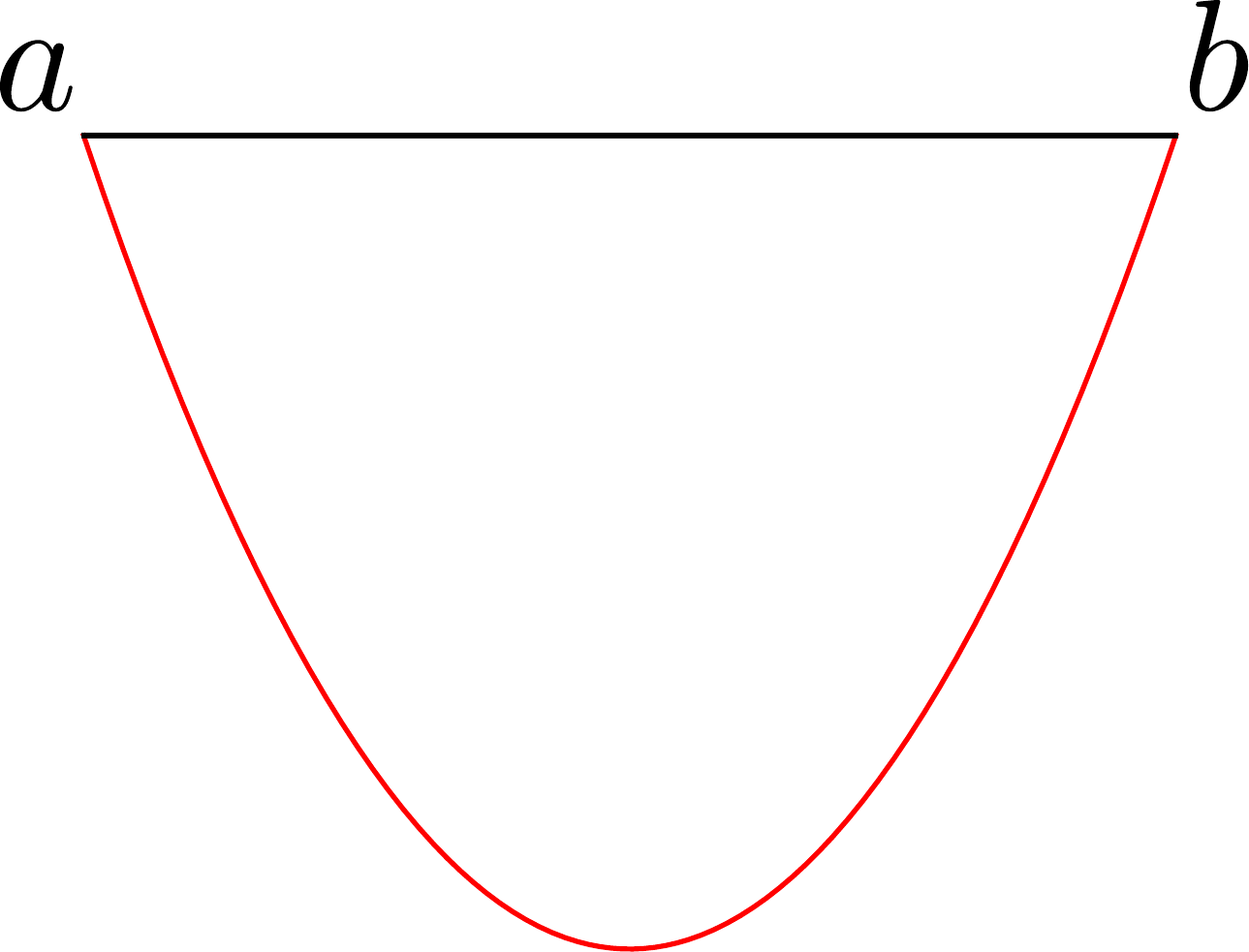}}\quad\quad
\subfigure[]{\includegraphics[width=0.2\textwidth]{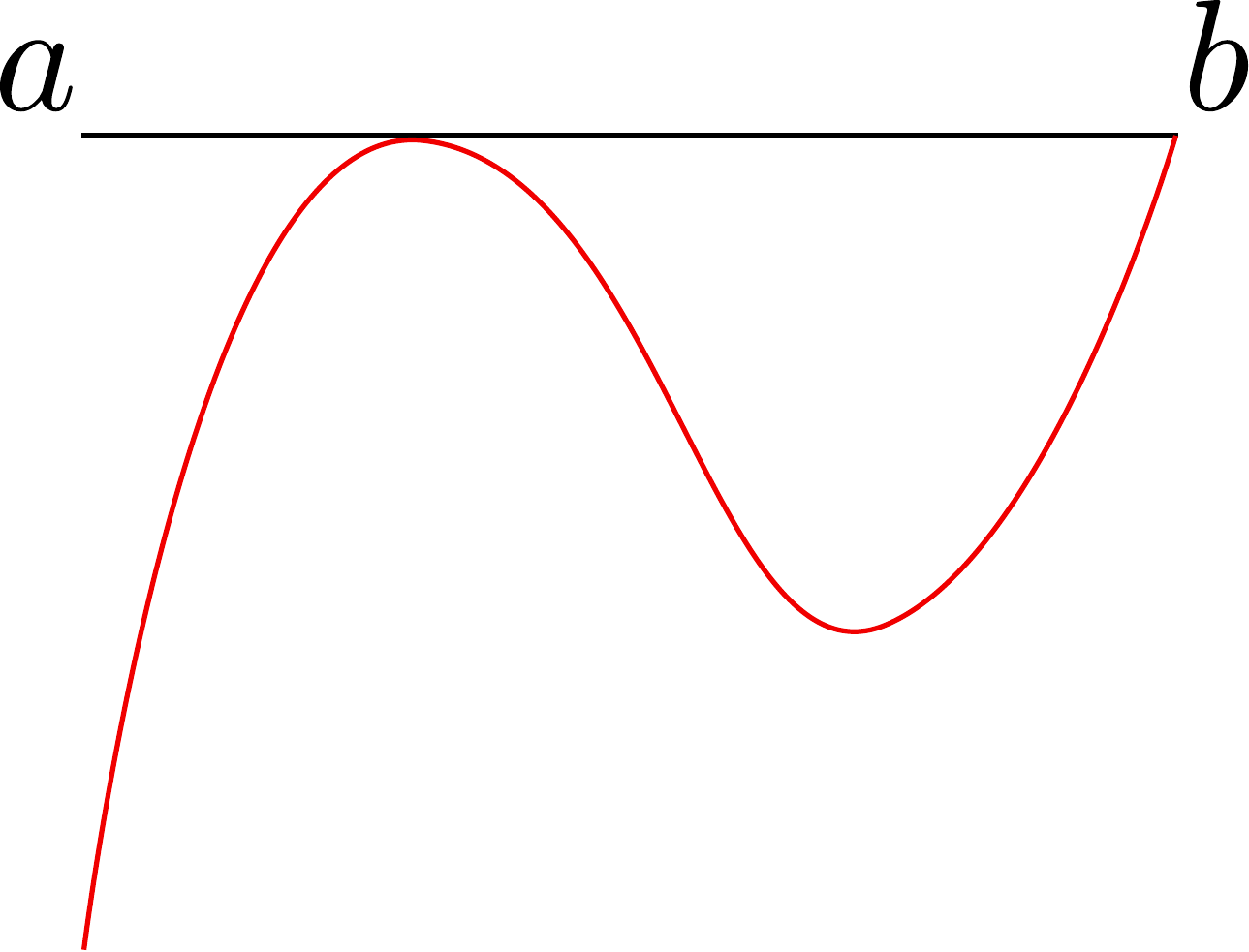}}
\caption{The possible qualitative graphs of $h(x)$, given the constraints provided by theorem \ref{th:analytical}. (a) and (b) are the possibilities for the lower bound case, while (c) and (d) are the possibilities for the upper bound case.}
\label{fig:proof-dsc}
\end{figure}

It follows that the support must be of the form $\{a, c\}$ with $c \in [a,b]$. To actually find the measure we need to impose all constraints, which leads us to the system

\[
\left\{
\begin{array}{l}
p_a + p_c = 1 \\
ap_a + cp_c = \lambda \\
a^2p_a + c^2p_c = \lambda^2 + \sigma^2
\end{array}
\right.
\]
For $\sigma > 0$ there is only one solution:

\[
p_a = \frac{\sigma^2}{\sigma^2 + (\lambda-a)^2}, \quad p_c = \frac{(\lambda-a)^2}{\sigma^2 + (\lambda-a)^2}, \quad c = \lambda +  \frac{\sigma^2}{\lambda-a}.
\]
and evaluating $\expect{g(X)}$ for this measure gives us the lower bound. If we wanted the upper bound instead, the only difference is that now we must have $h(x) \leq 0$, so the possibilities for the qualitative graph of $h(x)$ are the ones in Figs \ref{fig:proof-dsc}(c, d), so the support must be of the form $\{d, b\}$ where $d \in [a,b]$. The rest follows by swapping $a$ for $b$ and $c$ for $d$. \qed
\end{proof}

Let us examine some cases where we can apply theorem \ref{th:dg-strict-convex}

\subsubsection{Moment Generating Functions}
\label{sssec:mom-gen}

Suppose we want to find bounds for the moment generating function $\expect{e^{Xs}}$ of a non-negative random variable $X$. We have then $\mathcal{S} = \mathds{R}_+$ and in order to be able to use theorem \ref{th:dg-strict-convex} we will need to firstly study the case $\mathcal{S} = [0,a]$ and then use theorem \ref{th:lim} to obtain the correct bounds.

If $s > 0$, then $g(x) = e^{xs}$ is such that $g'(x)$ is strictly convex, so $X\in [0,a]$ leads us to the bounds

\[
\frac{\sigma^2 + \lambda^2 e^{\lambda s + \nicefrac{\sigma^2 s}{\lambda}}}{\sigma^2 + \lambda^2} \leq \expect{e^{Xs}} \leq \frac{\sigma^2 e^{as} + (\lambda - a)^2 e^{\lambda s + \nicefrac{\sigma^2 s}{(\lambda - a)}}}{\sigma^2 + (\lambda - a)^2}
\]
whereas if $s < 0$, then $g(x) = -e^{xs}$ is such that $g'(x)$ is strictly convex, so $X\in [0,a]$ implies

\[
\frac{\sigma^2 e^{as} + (\lambda - a)^2 e^{\lambda s + \nicefrac{\sigma^2 s}{(\lambda - a)}}}{\sigma^2 + (\lambda - a)^2} \leq \expect{e^{Xs}} \leq \frac{\sigma^2 + \lambda^2 e^{\lambda s + \nicefrac{\sigma^2 s}{\lambda}}}{\sigma^2 + \lambda^2}
\]
Finally, taking the limit $a\rightarrow \infty$, one arrives at

\[
\expect{e^{Xs}} \geq \frac{\sigma^2 + \lambda^2e^{\nicefrac{(\lambda^2 +\sigma^2)s}{\lambda}}}{\sigma^2 + \lambda^2}
\]
for $s>0$, with no upper bound available and

\[
e^{\lambda s} \leq \expect{e^{Xs}} \leq \frac{\sigma^2 + \lambda^2e^{\nicefrac{(\lambda^2 +\sigma^2)s}{\lambda}}}{\sigma^2 + \lambda^2}
\]
for $s<0$ (note that the lower bound does not improve over Jensen's inequality). These bounds can be visualized more easily graphing them for the cumulant generating function (see figure \ref{fig:gen-func})

\begin{figure}[htb]
\centering
\includegraphics[width=0.45\textwidth]{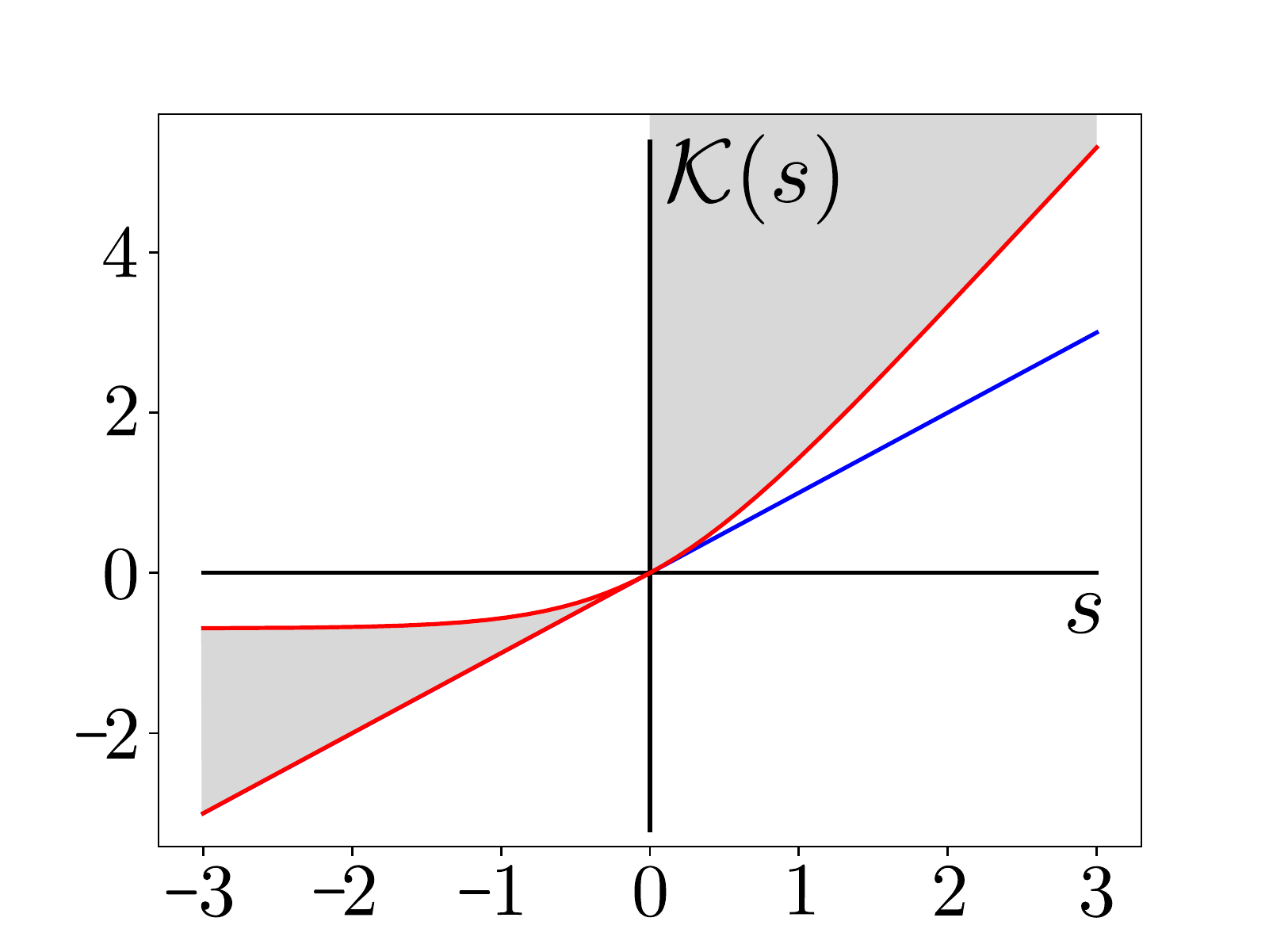}
\caption{Our bounds for the cumulant generating function of a non negative random variable with average and variance equal to 1 (grey region between the red curves) together with the usual Jensen lower bound (blue).}
\label{fig:gen-func}
\end{figure}

\subsubsection{Power Means}
\label{sssec:pow-mean}

Suppose that $X$ is a positive random variable, with average $\lambda$ and variance $\sigma^2$ and that we are interested in finding bounds to the power mean $\expect{X^s}^{\nicefrac{1}{s}}$ for $s\neq 0$. We must study $g(x) = x^s$ with $\mathcal{S} = ]0, \infty[$. Once again, we need to consider a progressive compact cover to use theorem \ref{th:dg-strict-convex} and then apply theorem \ref{th:lim} to obtain the final bound. Since any progressive compact cover of $]0,\infty[$ will do, we can use intervals of the form $[\nicefrac{1}{a}, a]$ and then take the limit $a\rightarrow \infty$.

Without worrying in a first moment which is the lower and which is the upper bound (which will depend on the convexity of the derivatives), the two bounds prescribed in theorem \ref{th:dg-strict-convex} (and their limits for $a\rightarrow \infty$) are as follows.

\noindent When $\nicefrac{1}{a}$ is in the support:
\[
\frac{\frac{\sigma^2}{a^s} + \left(\lambda -\frac{1}{a}\right)^2\left(\lambda + \frac{\sigma^2}{\lambda - \nicefrac{1}{a}}\right)^s}{\sigma^2 + \left(\lambda - \frac{1}{a}\right)^2} \rightarrow 
\left\{
\begin{array}{lr}
\frac{(\sigma^2 + \lambda^2)^{s-1}}{\lambda^{s-2}} & \mbox{if }s>0 \\
\infty & \mbox{if }s<0
\end{array}
\right.
\]
and when $a$ is in the support:

\[
\frac{\sigma^2 a^s + (\lambda - a)^2\left(\lambda - \frac{1}{\lambda - a}\right)^s}{\sigma^2 + (\lambda-a)^2} \rightarrow
\left\{
\begin{array}{lr}
\infty & \mbox{if }s>2 \\
\lambda^s & \mbox{if }s<2
\end{array}
\right.
\]

For $s > 2$ and $0 < s < 1$, $g'(x)$ is strictly convex for $x>0$, whereas for $s<0$ and for $1<s<2$, $-g'(x)$ is strictly convex. If we define $M_s = \expect{X^s}^{\nicefrac{1}{s}}$, this implies that

\[
0 \leq M_s \leq \lambda\quad\mbox{for }s < 0
\]\[
\frac{(\sigma^2 + \lambda^2)^{1-\nicefrac{1}{s}}}{\lambda^{1-\nicefrac{2}{s}}} \leq M_s \leq \lambda\quad\mbox{for }0 < s < 1
\]\[
\lambda \leq M_s \leq \frac{(\sigma^2 + \lambda^2)^{1-\nicefrac{1}{s}}}{\lambda^{1-\nicefrac{2}{s}}}\quad\mbox{for }1 < s < 2
\]\[
\frac{(\sigma^2 + \lambda^2)^{1-\nicefrac{1}{s}}}{\lambda^{1-\nicefrac{2}{s}}} \leq M_s\quad\mbox{for }s > 2\mbox{, with no upper bound available}
\]

These bounds and their comparison with Jensen's inequality can be found in figure \ref{fig:power}

\begin{figure}[htb]
\centering
\subfigure[]{\includegraphics[width=0.4\textwidth]{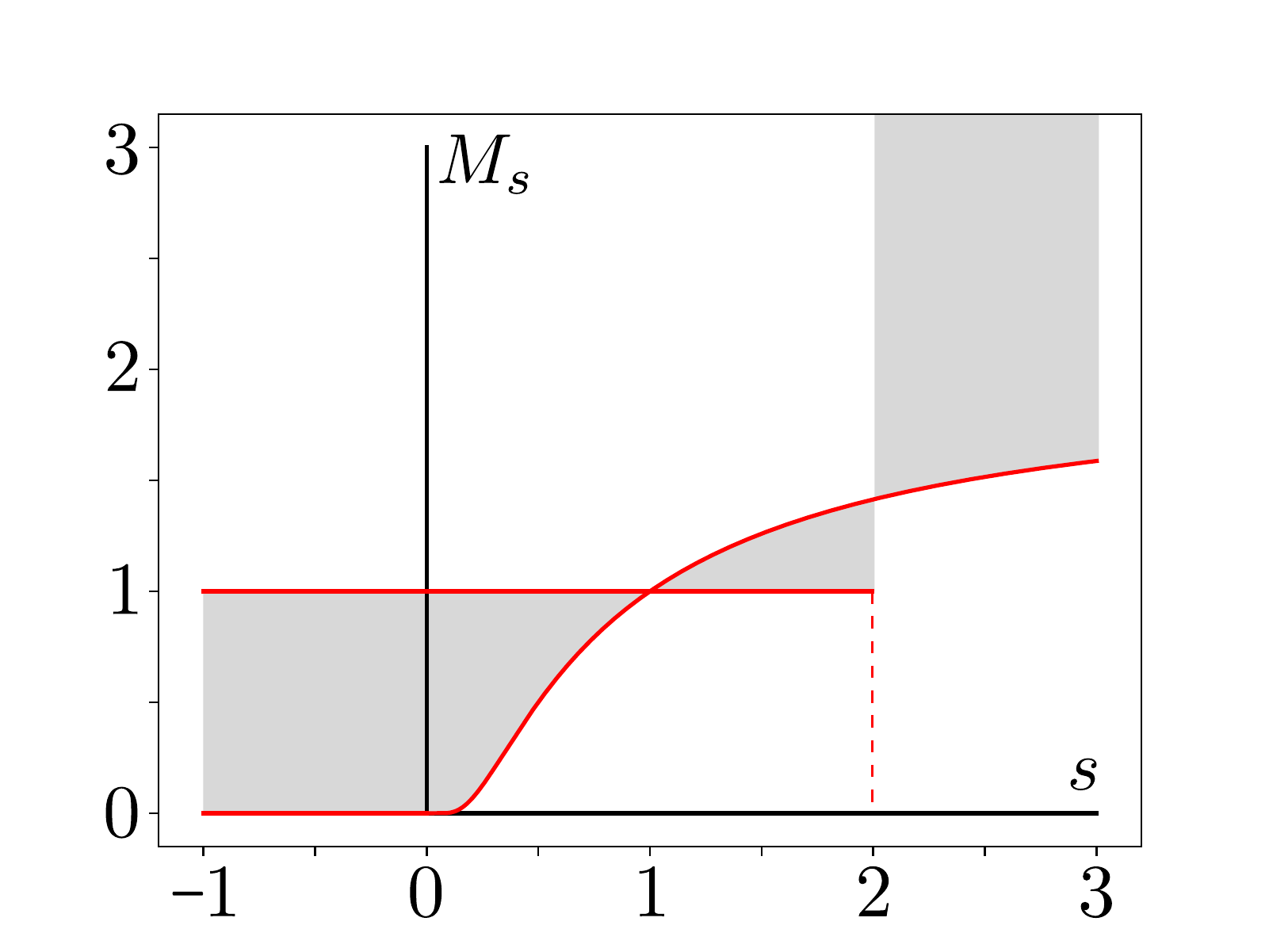}}\quad
\subfigure[]{\includegraphics[width=0.4\textwidth]{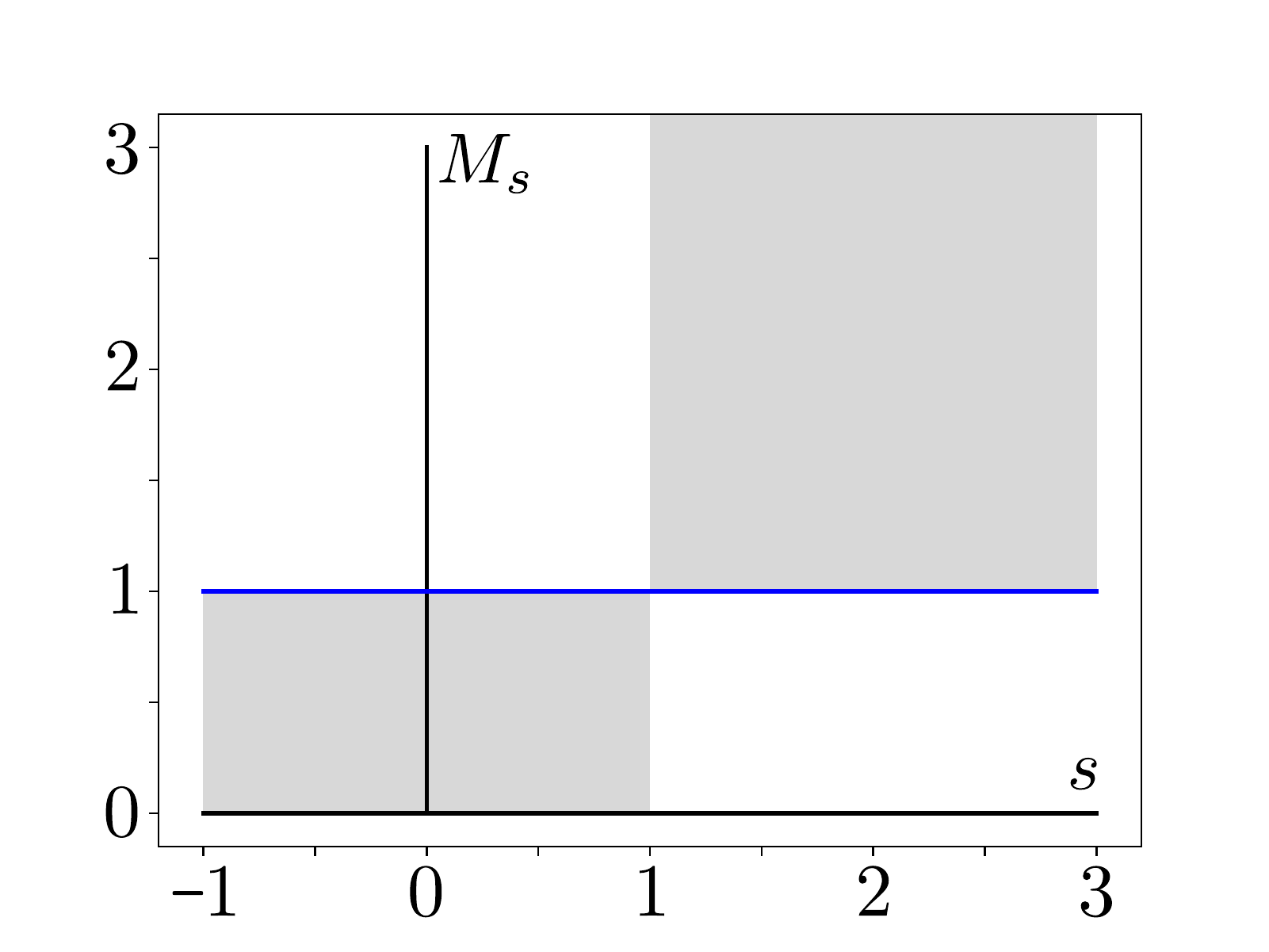}}
\caption{Our bounds for the power mean of a non negative random variable with average and variance equal to 1 (grey region between the red curves) together with the usual Jensen bounds (grey regions and blue curves).}
\label{fig:power}
\end{figure}

\subsection{If $g(x)$ is continuous}
\label{ssec:g-cont}
If we relax the hypothesis and require only that $g$ be continuous, we can still use corollary \ref{cor:Fs} to write the problem of finding the bounds as an extremization over measures with up to 3 points in their support:

\begin{theorem}
Let $g:\mathds{R} \rightarrow \mathds{R}$ continuous, $\lambda \in \mathds{R}$, $\sigma > 0$, $\vec{p} \equiv (p_a, p_b, p_c)$, $S = \{\vec{p} \in \mathds{R}^3\mid p_a, p_b, p_c > 0\,\,\mbox{and}\,\, p_a + p_b + p_c = 1\}$ and define

\[
x_a(\vec{p}, \theta) = \lambda + \sigma\left(\cos(\theta) \sqrt{\frac{p_b}{p_a}} + \sin(\theta)\sqrt{p_c}\right)\frac{1}{\sqrt{p_a + p_b}} \]\[
x_b(\vec{p}, \theta) = \lambda + \sigma\left(-\cos(\theta) \sqrt{\frac{p_a}{p_b}} + \sin(\theta)\sqrt{p_c}\right)\frac{1}{\sqrt{p_a + p_b}}
\]\[
x_c(\vec{p}, \theta) = \lambda - \sigma\sin(\theta)\sqrt{\frac{p_a + p_b}{p_c}}
\]
Then
\[
\inf_{\mu\in M} \expect{g(X)}_{\mu} = \inf_{\vec{p}\in S} \inf_{\theta} \left( p_a g(x_a) + p_b g(x_b) + p_c g(x_c)\right)
\]\[
\sup_{\mu\in M} \expect{g(X)}_{\mu} = \sup_{\vec{p}\in S} \sup_{\theta} \left( p_a g(x_a) + p_b g(x_b) + p_c g(x_c)\right)
\]
where $M$ is the set of measures with support in $\mathds{R}$, $\expect{X}_{\mu} = \lambda$ and $\vari{x}_{\mu} = \sigma^2$.
\label{th:dg-strict-convex2}
\end{theorem}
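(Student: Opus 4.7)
The plan is to reduce the problem to measures with at most three support points via Corollary \ref{cor:Fs}, and then show that the proposed parametrization exhausts (up to limits) all such measures satisfying the mean and variance constraints. In the setting of the theorem we have $\mathcal{S} = \mathds{R}$, $m = 2$, $f_1(x) = x$, $f_2(x) = x^2$, and $\phi = (\lambda, \lambda^2 + \sigma^2)$. Since $\mathds{R}$ is an $F_{\sigma}$ set and $\Gamma(x) = (x, x^2, g(x))$ is continuous by hypothesis on $g$, Corollary \ref{cor:Fs} yields
\[
\inf_{\mu \in M} \expect{g(X)}_{\mu} = \inf_{\mu \in M_3(\mathds{R},\phi)} \expect{g(X)}_{\mu},\quad\quad \sup_{\mu \in M} \expect{g(X)}_{\mu} = \sup_{\mu \in M_3(\mathds{R},\phi)} \expect{g(X)}_{\mu},
\]
so it suffices to show these restricted extrema agree with the ones taken over $S \times \mathds{R}$.

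First I would verify by direct computation that for every $(\vec{p}, \theta) \in S \times \mathds{R}$ the triple $(x_a, x_b, x_c)$ with weights $(p_a, p_b, p_c)$ satisfies the constraints: the $\cos\theta$ contributions to $\sum_i p_i (x_i - \lambda)$ cancel between $x_a$ and $x_b$, the $\sin\theta$ contributions cancel between the first two terms and $x_c$, and $\sum_i p_i (x_i - \lambda)^2$ collapses to $\sigma^2(\cos^2\theta + \sin^2\theta) = \sigma^2$. For surjectivity onto three-point measures with strictly positive weights, I would substitute $u_i = (x_i - \lambda)\sqrt{p_i}$: the constraints become $\sum_i \sqrt{p_i}\,u_i = 0$ and $\sum_i u_i^2 = \sigma^2$, which cut out a circle of radius $\sigma$ in the plane of $\mathds{R}^3$ orthogonal to the unit vector $(\sqrt{p_a}, \sqrt{p_b}, \sqrt{p_c})$. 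Writing $(u_a, u_b, u_c) = \sigma\cos\theta\, e_1 + \sigma\sin\theta\, e_2$ in the orthonormal basis $e_1 = (\sqrt{p_b}, -\sqrt{p_a}, 0)/\sqrt{p_a+p_b}$ and $e_2 = (\sqrt{p_a p_c}, \sqrt{p_b p_c}, -(p_a+p_b))/\sqrt{p_a+p_b}$ recovers exactly the stated formulas for $x_a, x_b, x_c$.

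The main obstacle is that $S$ requires strictly positive probabilities, so the parametrization does not directly reach measures in $M_3(\mathds{R}, \phi)$ supported on strictly fewer than three points. Since $\sigma > 0$ precludes one-point measures, only the two-point case needs to be handled. For a two-point measure $p_a\delta_{x_a} + p_b\delta_{x_b}$ in $M_3(\mathds{R}, \phi)$, I would perturb by letting $p_c \to 0^+$ while choosing $\sin\theta$ of order $\sqrt{p_c}$, so that the combination $\sin\theta\sqrt{(p_a+p_b)/p_c}$ stays bounded and hence $x_c$ stays bounded, while the $\sqrt{p_c}\sin\theta$ terms in $x_a, x_b$ vanish so that they converge to the original two-point values. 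By continuity of $g$, the term $p_c g(x_c)$ vanishes and the expression $p_a g(x_a) + p_b g(x_b) + p_c g(x_c)$ converges to the two-point expectation $p_a g(x_a) + p_b g(x_b)$. Combined with the reduction from Corollary \ref{cor:Fs} and the surjectivity onto three-point measures, this gives the desired equalities for both the infimum and the supremum.
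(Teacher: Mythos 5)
Your proposal is correct and follows essentially the same route as the paper: reduce to measures with at most three support points via Corollary \ref{cor:Fs}, solve/parametrize the constraint system to get exactly the stated $(x_a,x_b,x_c)$, and dispose of the two-point case by letting $p_c\to 0$ with $\theta$ chosen so that the third point contributes nothing in the limit. Your write-up is in fact somewhat more explicit than the paper's (the $u_i=(x_i-\lambda)\sqrt{p_i}$ change of variables making the surjectivity transparent, and the quantitative $\sin\theta\sim\sqrt{p_c}$ handling of the degenerate case), but it is the same argument.
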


\begin{proof}
Since $\mathcal{S} = \mathds{R}$ is an $F_{\sigma}$ set and $\Gamma(x) = (x, x^2, g(x))$ is continuous, then we can apply corollary \ref{cor:Fs}. Since $m=2$:

\[
\inf_{\mu\in \MSF} \expect{g(X)}_{\mu} = \inf_{\mu\in \MSM{3}} \expect{g(X)}_{\mu}
\]\[
\sup_{\mu\in \MSF} \expect{g(X)}_{\mu} = \sup_{\mu\in \MSM{3}} \expect{g(X)}_{\mu}
\]
where $\phi = (\lambda, \lambda^2 + \sigma^2)$ (so $\MSF = M$). To characterize the measures in $\MSM{3}$, we must impose the constraints. Calling the points in the support $a,b,c$ we have

\[
\left\{
\begin{array}{l}
p_a + p_b + p_c = 1 \\
ap_a + bp_b + cp_c = \lambda \\
a^2p_a + b^2p_b + c^2p_c = \lambda^2 + \sigma^2
\end{array}
\right.
\]
whose solution is

\[
\left\{
\begin{array}{l}
a = \lambda + \sigma\left(\cos(\theta) \sqrt{\frac{p_b}{p_a}} + \sin(\theta)\sqrt{p_c}\right)\frac{1}{\sqrt{p_a + p_b}} \\
b = \lambda + \sigma\left(-\cos(\theta) \sqrt{\frac{p_a}{p_b}} + \sin(\theta)\sqrt{p_c}\right)\frac{1}{\sqrt{p_a + p_b}} \\
c = \lambda - \sigma\sin(\theta)\sqrt{\frac{p_a + p_b}{p_c}}
\end{array}
\right.
\]
where the probabilities are constrained by $(p_a, p_b, p_c) \in S$ and $\theta$ is a free variable. From here the theorem follows from extremizing over these measures (note that the case with exactly 2 points in the support can be ignored, as we can always make $p_c \rightarrow 0$ in a way that $c$ does not contribute to $\expect{g(X)}$ by making $\theta \rightarrow 0$ in a convenient way). \qed
\end{proof}

This theorem also illustrates how to use these results when the constraints are not enough to reduce the possibilities to a single measure. We are left with an optimization problem over the measures satisfying the constraints.

\section{An alternative approach more suited for numerics}
\label{sec:numerics}

With the exception of section \ref{ssec:g-cont}, the cases we analysed so far could be tackled analytically. This was mostly because the number $n$ of random variables in the vector $X$ and the number $m$ of constraints was small, together with other properties that allowed us to reduce the size of the support. As an illustration, in the case where we can only apply corollary \ref{cor:Fs}, the measures that extremize $\expect{g(X)}_{\mu}$ can have in their support up to $m+1$ points in $\mathcal{S}\subseteq\mathds{R}^n$, which corresponds to $(m+1)(n+1)$ variables (for each unknown point in the support, each of the $n$ coordinates and the probability of that point are variables to be found), whereas we have only $m+1$ constraints (the $m$ constraints given by $\phi$ plus normalization of the measure). So we are still left with an optimization problem over the $(m+1)n$ remaining variables, which in general will be a nonlinear program (as seen in section \ref{ssec:g-cont}). The complexity of finding the bounds would then scale exponentially with $(m+1)n$ and quickly become numerically unfeasible.

This situation can be somewhat remedied if we look at what we have been doing from a different angle. The functions 
\[
h_{\pm}(x) = \innp{\alpha_{\pm}}{f(x)} + \beta_{\pm} g(x) + c_{\pm}
\]
that appear in theorem \ref{th:analytical} can be thought as establishing inequalities $\expect{h_-(X)}_{\mu}\geq 0$ and $\expect{h_+(X)}_{\mu}\leq 0$. Substituting the constraints, these give bounds to $\expect{g(X)}_{\mu}$ that can be then optimized by changing the $\alpha_{\pm}$, $\beta_{\pm}$ and $c_{\pm}$, until a measure satisfying $\expect{h(X)}_{\mu} = 0$ (or a sequence of measures, whose expectations converge to 0) can be found. The main result is summarized in the following theorem

\begin{theorem}
Let $\mathcal{S}\subseteq\mathds{R}^n$. If $\phi\notin\bound{\mathcal{D}(\mathcal{S})
}$, then
\[
\inf_{\mu\in\MSF} \expect{g(X)}_{\mu} = \sup_{\alpha\in\mathds{R}^m}\left(\inf_{x\in\mathcal{S}} \left(g(x) + \innp{\alpha}{(f(x) - \phi)}\right)\right)
\]\[
\mbox{and}\quad\quad\sup_{\mu\in\MSF} \expect{g(X)}_{\mu} = \inf_{\alpha\in\mathds{R}^m}\left(\sup_{x\in\mathcal{S}} \left(g(x) + \innp{\alpha}{(f(x) - \phi)}\right)\right)
\]
\label{th:numerical}
\end{theorem}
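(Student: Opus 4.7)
The plan is to prove the equality by combining weak duality (a routine exchange-of-order argument) with strong duality extracted from Theorem \ref{th:analytical}. I would focus on the infimum identity, since the supremum identity then follows by applying the infimum result to $-g$ in place of $g$.

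For the weak duality half, I would observe that for any $\alpha\in\mathds{R}^m$ and $\mu\in\MSF$ the constraint $\expect{f(X)}_{\mu}=\phi$ gives
\[
\expect{g(X)}_{\mu} = \expect{g(X) + \innp{\alpha}{f(X)-\phi}}_{\mu} \geq \inf_{x\in\mathcal{S}}\bigl(g(x) + \innp{\alpha}{f(x)-\phi}\bigr).
\]
Taking the infimum over $\mu$ on the left and the supremum over $\alpha$ on the right preserves this inequality, yielding $\inf_{\mu}\expect{g(X)}_{\mu} \geq \sup_{\alpha}\inf_{x}(\cdots)$, the easy direction.

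For the reverse inequality, I would set $\sigma_- = \inf_{\mu\in\MSF}\expect{g(X)}_{\mu}$. If $\sigma_- = -\infty$, weak duality already forces the right-hand side to equal $-\infty$ and we are done. Otherwise $\sigma_-$ is finite, and Theorem \ref{th:analytical} supplies $\alpha_-\in\mathds{R}^m$, $\beta_-\geq 0$, and $c_-$ with $(\alpha_-,\beta_-)\neq\vec{0}$ such that $\innp{\alpha_-}{f(x)}+\beta_- g(x)+c_-\geq 0$ on all of $\mathcal{S}$, and such that taking expectations against measures in $\MSF$ gives the complementary identity $\innp{\alpha_-}{\phi}+\beta_-\sigma_-+c_-=0$.

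The main obstacle, and the exact place where the hypothesis $\phi\notin\bound{\mathcal{D}(\mathcal{S})}$ does its work, is showing that $\beta_- > 0$ so that we may normalize. If instead $\beta_- = 0$ were possible, then $\alpha_-\neq\vec{0}$ and the complementary identity collapses to $c_- = -\innp{\alpha_-}{\phi}$, leaving $\innp{\alpha_-}{f(x)-\phi}\geq 0$ for every $x\in\mathcal{S}$. The hyperplane $\{y\in\mathds{R}^m : \innp{\alpha_-}{y-\phi}=0\}$ would then be a supporting hyperplane of $\hull{f(\mathcal{S})} = \mathcal{D}(\mathcal{S})$ (invoking Corollary \ref{co:D-Hf}) passing through $\phi$, forcing $\phi\in\bound{\mathcal{D}(\mathcal{S})}$ and contradicting the hypothesis. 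Once $\beta_- > 0$ is secured, setting $\alpha^* = \alpha_-/\beta_-$ and dividing the sign inequality by $\beta_-$ (then eliminating $c_-/\beta_-$ via the complementary identity) produces $g(x) + \innp{\alpha^*}{f(x)-\phi} \geq \sigma_-$ for every $x\in\mathcal{S}$. Hence $\sup_{\alpha}\inf_{x}(\cdots) \geq \sigma_-$, which together with weak duality gives the equality. The supremum identity then follows by applying the infimum identity to $-g$, completing the proof.
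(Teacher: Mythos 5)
Your treatment of the finite case and of $\sigma_-=-\infty$ is correct and follows essentially the paper's route, in places more cleanly. The weak-duality inequality, the appeal to Theorem \ref{th:analytical}, and the argument that $\beta_-=0$ would make $\{y\in\mathds{R}^m : \innp{\alpha_-}{(y-\phi)}=0\}$ a supporting hyperplane of $\mathcal{D}(\mathcal{S})=\hull{f(\mathcal{S})}$ through $\phi$ are the same mechanism the paper uses (the paper phrases that last step as projecting the supporting hyperplane of $\hull{\Gamma(\mathcal{S})}$ onto the $z=0$ hyperplane, and then runs the finite case through an epigraph set $\mathcal{C}$ rather than your direct normalization, but the content is identical). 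One point you should make explicit: a supporting hyperplane through $\phi$ forces $\phi\in\bound{\mathcal{D}(\mathcal{S})}$ only if $\phi\in\clo{\mathcal{D}(\mathcal{S})}$; this does hold in your setting because $\sigma_-$ finite forces $\MSF\neq\varnothing$, hence $\phi\in\mathcal{D}(\mathcal{S})$, so the contradiction is legitimate. Your disposal of $\sigma_-=-\infty$ by weak duality alone is also valid (that case entails $\MSF\neq\varnothing$) and is in fact lighter than the paper's argument, which routes it through Lemma \ref{lemma:cover} and Theorem \ref{th:lim}.

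The genuine gap is the third case, which your dichotomy ``either $\sigma_-=-\infty$ or $\sigma_-$ finite'' silently excludes: $\MSF=\varnothing$, i.e.\ $\sigma_-=+\infty$ as an infimum over an empty family. The hypothesis $\phi\notin\bound{\mathcal{D}(\mathcal{S})}$ does not rule this out; it allows $\phi\notin\clo{\mathcal{D}(\mathcal{S})}$, in which case by Corollary \ref{co:D-Hf} no admissible measure exists. There weak duality is vacuous (it reads $+\infty\geq\sup_\alpha\inf_x(\cdots)$) and Theorem \ref{th:analytical} is not applicable, so your argument says nothing about the right-hand side: you must still show that $\sup_{\alpha\in\mathds{R}^m}\inf_{x\in\mathcal{S}}\left(g(x)+\innp{\alpha}{(f(x)-\phi)}\right)=+\infty$. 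This is exactly where the paper does additional work: it separates $\phi$ from $\clo{\mathcal{D}(\mathcal{S})}$ (Lemma \ref{lem:feasible}) to obtain $\alpha_0$ and $\varepsilon>0$ with $\innp{\alpha_0}{(f(x)-\phi)}>\varepsilon$ on $\mathcal{S}$, and then, because $g$ need not be bounded below on all of $\mathcal{S}$, passes to the progressive cover of Lemma \ref{lemma:cover} on which $\Gamma$ is bounded, scales $\alpha_0$, and invokes Theorem \ref{th:lim}. As written, your proof establishes the theorem only under the extra assumption $\MSF\neq\varnothing$ (equivalently $\phi\in\inter{\mathcal{D}(\mathcal{S})}$); to match the stated theorem you need to supply the infeasible case along these lines.
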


Before proceeding with the proof we will need the following lemmas

\begin{lemma}
$\phi\notin\clo{\mathcal{D}(\mathcal{S})}$ if and only if there exists $\alpha\in\mathds{R}^m$ and $\varepsilon > 0$ such that $\innp{\alpha}{(f(x)-\phi)} > \varepsilon\,\,\forall\,\, x\in\mathcal{S}$
\label{lem:feasible}
\end{lemma}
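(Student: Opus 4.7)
The plan is to first invoke Corollary \ref{co:D-Hf}, which identifies $\mathcal{D}(\mathcal{S})$ with $\hull{f(\mathcal{S})}$, so that the statement reduces to an equivalence between $\phi \notin \clo{\hull{f(\mathcal{S})}}$ and the strict separation condition on $\mathcal{S}$. After this reduction the result becomes a purely convex-geometric separation statement about a point and a closed convex set in $\mathds{R}^m$, which strongly suggests using the separating hyperplane theorem in its strong form.

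For the easy direction (existence of $\alpha,\varepsilon$ implies $\phi\notin\clo{\mathcal{D}(\mathcal{S})}$), I would start from $\innp{\alpha}{f(x)-\phi}>\varepsilon$ for every $x\in\mathcal{S}$, and observe that the linear functional $y\mapsto \innp{\alpha}{y-\phi}$ is preserved under finite convex combinations. Hence $\innp{\alpha}{y-\phi}>\varepsilon$ for every $y\in\hull{f(\mathcal{S})}$, and continuity of the linear functional yields $\innp{\alpha}{y-\phi}\geq\varepsilon>0$ for every $y\in\clo{\hull{f(\mathcal{S})}}$. Plugging in $y=\phi$ would force $0\geq\varepsilon$, a contradiction, so $\phi$ cannot lie in this closure.

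For the nontrivial direction, I would assume $\phi\notin\clo{\hull{f(\mathcal{S})}}$ and apply the strong separating hyperplane theorem to the closed convex set $C\equiv\clo{\hull{f(\mathcal{S})}}$ and the compact (one-point) convex set $\{\phi\}$. This yields $\alpha\in\mathds{R}^m$ and real numbers $c_1<c_2$ with $\innp{\alpha}{\phi}\leq c_1$ and $\innp{\alpha}{y}\geq c_2$ for all $y\in C$. Taking $y=f(x)$ for $x\in\mathcal{S}\subseteq f^{-1}(C)$ and setting $\varepsilon=(c_2-c_1)/2>0$, I obtain $\innp{\alpha}{f(x)-\phi}\geq c_2-c_1>\varepsilon$, which is the desired strict inequality uniform in $x$.

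The only real subtlety, and thus the one step I would be careful about, is invoking the correct version of the separation theorem: weak separation would only give $\innp{\alpha}{f(x)-\phi}\geq 0$, insufficient to produce a positive $\varepsilon$. Strong separation of a closed convex set from a disjoint compact convex set is exactly what provides the required strict gap, and compactness here is trivial since we are separating from a single point. Everything else is routine: reduction via Corollary \ref{co:D-Hf}, linearity of $\innp{\alpha}{\cdot}$ on convex combinations, and continuity of the linear functional to pass from $\hull{f(\mathcal{S})}$ to its closure.
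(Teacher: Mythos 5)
Your proof is correct and follows essentially the same route as the paper: strong separation of the closed convex set $\clo{\mathcal{D}(\mathcal{S})}=\clo{\hull{f(\mathcal{S})}}$ from the compact singleton $\{\phi\}$ for the hard direction, and transferring the uniform strict inequality to the closure of the hull and plugging in $y=\phi$ for the easy direction. The only cosmetic difference is that the paper propagates the inequality to $\mathcal{D}(\mathcal{S})$ by taking expectations and invoking Corollary \ref{co:existence}, whereas you use finite convex combinations and continuity directly, which amounts to the same argument.
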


\begin{proof}
If $\phi\notin\clo{\mathcal{D}(\mathcal{S})}$, then since $\clo{\mathcal{D}(\mathcal{S})}$ and $\{\phi\}$ are convex and closed and $\{\phi\}$ is also compact, then by the Separating Hyperplane Theorem, there exists a hyperplane $\Pi$ that separates both sets with a gap, that is, if $\Pi$ is defined by $\innp{\alpha}{x} + \beta = 0$, then there exists $\varepsilon > 0$ such that $\innp{\alpha}{\phi} + \beta < \nicefrac{-\varepsilon}{2}$ and $\innp{\alpha}{y} + \beta > \nicefrac{\varepsilon}{2}\,\,\forall\,\, y\in\clo{\mathcal{D}(\mathcal{S})}$. Combining both inequalities it follows that $\innp{\alpha}{(y-\phi)} > \varepsilon\,\,\forall\,\, y\in\clo{\mathcal{D}(\mathcal{S})}$. Since $\mathcal{D}(\mathcal{S}) = \hull{f(\mathcal{S})}$, then $f(\mathcal{S}) \subseteq \clo{\mathcal{D}(\mathcal{S})}$ and hence $\innp{\alpha}{(f(x)-\phi)} > \varepsilon\,\,\forall\,\, x\in\mathcal{S}$.

On the other hand, if $\innp{\alpha}{(f(x)-\phi)} > \varepsilon\,\,\forall\,\, x\in\mathcal{S}$, then taking an expectation on both sides, we have that $\innp{\alpha}{(\expect{f(x)}_{\mu}-\phi)} > \varepsilon\,\,\forall\,\, \mu\in\MS$. Using corollary \ref{co:existence}, this means that $\innp{\alpha}{(y-\phi)} > \varepsilon\,\,\forall\,\, y\in\mathcal{D}(\mathcal{S})$ and this implies that $\innp{\alpha}{(y-\phi)} \geq \varepsilon\,\,\forall\,\, y\in\clo{\mathcal{D}(\mathcal{S})}$. Since taking $y = \phi$ implies that $\innp{\alpha}{(y-\phi)} = 0$ and $\varepsilon > 0$, this finally implies $\phi\notin\clo{\mathcal{D}(\mathcal{S})}$. \qed
\end{proof}

\begin{lemma}
If $\phi\notin\bound{\mathcal{D(\mathcal{S})}}$, then there exists a progressive cover $(\mathcal{S}_k)_{k=1}^{\infty}$ of $\mathcal{S}$, such that for all $k$, $\Gamma(\mathcal{S}_k)$ is bounded and
\begin{itemize}
\item $\phi\notin\clo{\mathcal{D}(\mathcal{S}_k)}$ if $\phi\notin\clo{\mathcal{D(\mathcal{S})}}$
\item $\phi\in\inter{\mathcal{D}(\mathcal{S}_k)}$ if $\phi\in\inter{\mathcal{D(\mathcal{S})}}$
\end{itemize}
\label{lemma:cover}
\end{lemma}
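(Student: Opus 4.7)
The plan is to build a basic progressive cover of $\mathcal{S}$ with bounded $\Gamma$-images and then tweak it to fit the two cases. A convenient base is
\[
\mathcal{T}_k \;=\; \mathcal{S} \cap \{x \in \mathds{R}^n : \|\Gamma(x)\| \leq k\}.
\]
The inclusions $\mathcal{T}_i \subseteq \mathcal{T}_j$ for $i \leq j$ and $\mathcal{T}_k \subseteq \mathcal{S}$ are immediate, each $x \in \mathcal{S}$ enters $\mathcal{T}_k$ as soon as $k \geq \|\Gamma(x)\|$, and $\Gamma(\mathcal{T}_k)$ is contained in a ball of radius $k$. So $(\mathcal{T}_k)_{k=1}^{\infty}$ is a progressive cover of $\mathcal{S}$ with the required boundedness property.

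For the case $\phi \notin \clo{\mathcal{D}(\mathcal{S})}$, I would simply set $\mathcal{S}_k = \mathcal{T}_k$. Lemma \ref{lem:feasible} yields $\alpha \in \mathds{R}^m$ and $\varepsilon > 0$ with $\innp{\alpha}{(f(x)-\phi)} > \varepsilon$ for all $x \in \mathcal{S}$; this inequality is inherited on any subset, and applying the converse direction of Lemma \ref{lem:feasible} to $\mathcal{S}_k$ gives $\phi \notin \clo{\mathcal{D}(\mathcal{S}_k)}$.

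For the case $\phi \in \inter{\mathcal{D}(\mathcal{S})}$, I would augment the base by a fixed finite set $X_0 \subseteq \mathcal{S}$ chosen so that $\phi \in \inter{\hull{f(X_0)}}$, and set $\mathcal{S}_k = \mathcal{T}_k \cup X_0$. Since $X_0$ is finite and contained in every $\mathcal{S}_k$, the sequence remains a progressive cover, $\Gamma(\mathcal{S}_k)$ stays bounded (finite union of a bounded set and a finite set), and monotonicity of the interior under inclusion, together with Corollary \ref{co:D-Hf}, gives $\phi \in \inter{\hull{f(X_0)}} \subseteq \inter{\hull{f(\mathcal{S}_k)}} = \inter{\mathcal{D}(\mathcal{S}_k)}$.

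The main obstacle is the construction of $X_0$. Using Corollary \ref{co:D-Hf} and the hypothesis, there is an open ball $B(\phi, r) \subseteq \hull{f(\mathcal{S})}$. Inside this ball I would place the $m+1$ vertices $v_0, \ldots, v_m$ of a regular $m$-simplex centered at $\phi$; these are affinely independent, so $\phi$, being their centroid, lies in the $\mathds{R}^m$-interior of $\hull{\{v_0, \ldots, v_m\}}$. By Caratheodory's theorem each $v_i \in \hull{f(\mathcal{S})}$ is a convex combination of finitely many $f(x_{ij})$ with $x_{ij} \in \mathcal{S}$, and taking $X_0$ to be the union of all these $x_{ij}$ over $i$ and $j$ yields $\hull{\{v_0, \ldots, v_m\}} \subseteq \hull{f(X_0)}$. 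Hence $\phi$ remains an interior point of $\hull{f(X_0)}$, completing the construction.
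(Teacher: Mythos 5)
Your proof is correct and follows essentially the same route as the paper: a progressive cover with bounded $\Gamma$-image obtained by intersecting $\mathcal{S}$ with preimages of bounded sets under $\Gamma$, plus a simplex-and-Caratheodory argument producing a finite set $F_\phi\subseteq\mathcal{S}$ witnessing $\phi\in\inter{\hull{f(F_\phi)}}$. The only cosmetic differences are that you adjoin this finite set to every cover element (rather than noting it is eventually contained in $\mathcal{S}_k$ and discarding the first elements) and you invoke Lemma \ref{lem:feasible} where the paper just uses monotonicity $\mathcal{D}(\mathcal{S}_k)\subseteq\mathcal{D}(\mathcal{S})$.
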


\begin{proof}
Let $(R_k)_{k=1}^{\infty}$ be a progressive bounded cover of $\mathds{R}^{m+1}$ (like $[-k,k]^{m+1}$). We will use it to build a progressive bounded cover $(\mathcal{B}_k)_{k=1}^{\infty}$ of $\Gamma(\mathcal{S})$, by taking $\mathcal{B}_k = \Gamma(\mathcal{S})\cap R_k$, which we'll in turn use to find a progressive cover $(\mathcal{S}_k)_{k=1}^{\infty}$ of $\mathcal{S}$, by taking $\mathcal{S}_k = \mathcal{S} \cap \Gamma^{-1}(\mathcal{B}_k)$. Since $\Gamma(\mathcal{S}_k) = \mathcal{B}_k$ by construction, then all that is left is to show that we can find a cover $(R_k)$ such that the relation between $\phi$ and $\mathcal{D}(\mathcal{S}_k)$ is as predicted by the theorem.

It suffices to show that the relation between $\phi$ and $\mathcal{D}(\mathcal{S}_k)$ is as prescribed for all $k>N_{\phi}$ (as we could just remove the first elements of the cover), which must actually happen regardless of what $(R_k)$ is. To see this, note first that $(\mathcal{S}_k)$ being a progressive cover of $\mathcal{S}$ implies $\mathcal{D}(\mathcal{S}_k) \subseteq \mathcal{D}(\mathcal{S})$, which is enough to prove the case where $\phi\notin\clo{\mathcal{D}(\mathcal{S})}$. For the case $\phi\in\inter{\mathcal{D}(\mathcal{S})}$, since this means that $\phi\in\inter{\hull{f(\mathcal{S})}}$, then $\phi\in\inter{S}$, where $S$ is a simplex such that $S \subseteq \inter{\hull{f(\mathcal{S})}}$. Applying Caratheodory's theorem on the vertexes $V_i$ of $S$, it follows that each $V_i$ is a convex combination of points in a finite subset of $f(\mathcal{S})$ and hence if $F_{\phi}$ is the union of these subsets, we have $\phi\in\inter{\hull{F_{\phi}}}$. Since $F_{\phi}\subseteq f(\mathcal{S})$ is finite and $(\mathcal{S}_k)$ is a progressive cover of $\mathcal{S}$, then for a sufficiently large $k$ ($k>N_{\phi}$) we must have $F_{\phi} \subseteq f(\mathcal{S}_k)$ and hence $\phi\in\inter{\hull{f(\mathcal{S}_k)}} = \inter{\mathcal{D}(\mathcal{S}_k)}$, completing the proof. \qed
\end{proof}

\begin{proof}[Theorem \ref{th:numerical}]
We'll do the proof only for $\inf\left(\expect{g(X)}\right)$, as the proof for the supremum follows from considering the infimum for $-g$. Defining

\[
\sigma = \inf_{\mu\in\MSF} \expect{g(X)}_{\mu}
\]
we will start with the case where $\sigma$ is finite. In this case, the hypothesis for theorem \ref{th:analytical} are satisfied, so consider the values $\alpha_-^i$, $\beta_-$ and $c_-$ predicted by it (that will be denoted $\alpha_i$, $\beta$ and $c$ for simplicity). As seen in the proof of theorem \ref{th:analytical}, $a = (\alpha_1,\ldots, \alpha_m, \beta)$ is a vector normal to a supporting hyperplane $\Pi$ of $\hull{\Gamma(\mathcal{S})}$ that passes through $(\phi, \sigma) \in \clo{\hull{\Gamma(\mathcal{S})}}$.

Suppose that we had $\beta = 0$. In this case, if we project all points in $\Pi$ into the hyperplane $z = 0$, we get $\Pi'\times \{0\}$ (instead of $\mathds{R}^m\times \{0\}$), where $\Pi'$ is a hyperplane in $\mathds{R}^m$ with normal vector $(\alpha_1,\ldots, \alpha_m)$. Furthermore, if we project all points in $\hull{\Gamma(\mathcal{S})}$ into the $z=0$ hyperplane, we get $\mathcal{D}(\mathcal{S}) \times \{0\}$ (because of corollary \ref{co:existence}). But since $\Pi$ is a supporting hyperplane of $\hull{\Gamma(\mathcal{S})}$, then $\Pi'$ will be a supporting hyperplane of $\mathcal{D}(\mathcal{S})$ passing through $\phi$. As a consequence this would imply that $\phi \in \bound{\mathcal{D}(\mathcal{S})}$. Our point is that the hypothesis that $\phi\notin\bound{\mathcal{D}(\mathcal{S})}$ implies then that $\beta > 0$ and as such, without loss of generality, we can choose $\beta = 1$ for the values predicted by theorem \ref{th:analytical}, that is
\begin{equation}
\innp{\alpha}{f(x)} + g(x) + c \geq 0\,\,\,\forall\, x\in \mathcal{S}
\label{eq:g-ineq}
\end{equation}

\begin{equation}
\inf_{\mu\in\MSF} \expect{\innp{\alpha}{f(X)} + g(X) + c}_{\mu} = 0 \Rightarrow 
\sigma = - \innp{\alpha}{\phi} - c.
\label{eq:lolol}
\end{equation}
Consider now the set

\[
\mathcal{C}=\left\{(\alpha, z)\in\mathds{R}^{m+1}\,\middle|\,g(x) + \innp{\alpha}{(f(x) - \phi)} + z \geq 0\,\,\,\forall\, x\in\mathcal{S}\right\}
\]
The definition of $\mathcal{C}$ implies that if $\mu\in\MSF$ and $(\alpha, z) \in \mathcal{C}$, then $\expect{g(X)}_{\mu} \geq -z$.
However this implies that

\begin{equation}
\sigma = -\inf_{(\alpha, z) \in \mathcal{C}} z
\label{eq:lol-b}
\end{equation}
To see why this is true, we first note that $(\alpha, -\sigma) \in \mathcal{C}$ (which follows directly from substituting $\sigma = - \innp{\alpha}{\phi} - c$ into the definition of $\mathcal{C}$, while using (\ref{eq:g-ineq})) and that $(\alpha,z)\in \mathcal{C} \Rightarrow (\alpha,z')\in \mathcal{C}\,\,\forall\,\, z' > z$. So if we suppose by absurd that $\inf_{(\alpha, z) \in \mathcal{C}} (z) \neq -\sigma$, then there would need to exist $(\alpha, z')\in \mathcal{C}$ such that $z' < -\sigma$. But then we would have

\[
g(x) + \innp{\alpha}{(f(x) - \phi)} \geq -z' \,\,\,\forall\, x\in\mathcal{S} \Rightarrow
\expect{g(X)}_{\mu} \geq - z' >  \sigma \,\,\forall\,\, \mu\in\MSF
\]
which contradicts the definition of $\sigma$.

It also follows from its definition that $\mathcal{C}$ is convex and the epigraph of some convex function $F(\alpha)$. The definition of $\mathcal{C}$ leads us easily to
\[
F(\alpha) = -\inf_{x\in\mathcal{S}}\left(g(x) + \innp{\alpha}{(f(x) - \phi)}\right)
\]
Hence, if we calculate the infimum in equation (\ref{eq:lol-b}) by first taking the infimum over $z$ and then over $\alpha$ we get

\[
\sigma = - \inf_{(\alpha,z)\in \mathcal{C}} z = -\inf_{\alpha\in\mathds{R}^m}(F(\alpha)) = \sup_{\alpha\in\mathds{R}^m}(-F(\alpha)) \Rightarrow
\]\[
\inf_{\mu\in\MSF} \expect{g(X)}_{\mu} = \sup_{\alpha\in\mathds{R}^m}\left(\inf_{x\in\mathcal{S}} \left(g(x) + \innp{\alpha}{(f(x) - \phi)}\right)\right)
\]

This concludes the proof of the case where $\sigma$ is finite. For the case where $\sigma$ is infinite, we start with the progressive cover $(\mathcal{S}_k)_{k=1}^{\infty}$ predicted by lemma \ref{lemma:cover}. For the case $\sigma = \infty$, since this implies $\phi\notin\clo{\mathcal{D}(\mathcal{S})}$, then $\phi\notin\clo{\mathcal{D}(\mathcal{S}_k)}$. Using lemma \ref{lem:feasible}, there exists $\alpha\in\mathds{R}^m$ and $\varepsilon > 0$ such that $\innp{\alpha}{(f(x)-\phi)} > \varepsilon\,\,\forall\,\, x\in\mathcal{S}_k$. It follows that for all $\lambda > 0$ we have
\[
\inf_{x\in\mathcal{S}_k} \left(g(x) + \lambda \innp{\alpha}{(f(x) - \phi)}\right) \geq
\inf_{x\in\mathcal{S}_k} g(x) + \inf_{x\in\mathcal{S}_k} \left(\lambda \innp{\alpha}{(f(x) - \phi)}\right) \geq
\lambda \varepsilon + \inf_{x\in\mathcal{S}_k} g(x)
\]
and since $g(\mathcal{S}_k)$ is bounded (because of lemma \ref{lemma:cover}), then taking $\lambda\rightarrow\infty$ it follows that
\[
\inf_{\mu\in M(\mathcal{S}_k, \phi)}\expect{g(X)}_{\mu} = \sup_{\alpha\in\mathds{R}^m}\left(\inf_{x\in\mathcal{S}_k} \left(g(x) + \innp{\alpha}{(f(x) - \phi)}\right)\right) = \infty
\]
and applying theorem \ref{th:lim} completes the proof of this case.

Finally, for the case $\sigma = -\infty$ it follows that $\phi\in\inter{\mathcal{D}(\mathcal{S}_k)}$ and since $g(\mathcal{S}_k)$ is bounded, then $\inf \left(\expect{g(X)}_{\mu}\right)$ for $\mu\in M(\mathcal{S}_k,\phi)$ is finite. Using what we already proved for the case where $\sigma$ is finite, it follows that

\[
\inf_{\mu\in M(\mathcal{S}_k,\phi)} \expect{g(X)}_{\mu} = \sup_{\alpha\in\mathds{R}^m}\left(\inf_{x\in\mathcal{S}_k} \left(g(x) + \innp{\alpha}{(f(x) - \phi)}\right)\right)
\]
If we combine this result with theorem \ref{th:lim} we get

\[
-\infty = \inf_{\mu\in M(\mathcal{S},\phi)} \expect{g(X)}_{\mu} = \lim_{k\rightarrow\infty} \left( \inf_{\mu\in M(\mathcal{S}_k,\phi)} \expect{g(X)}_{\mu} \right)=
\lim_{k\rightarrow\infty} \left( \sup_{\alpha\in\mathds{R}^m}\left(\inf_{x\in\mathcal{S}_k} \left(g(x) + \innp{\alpha}{(f(x) - \phi)}\right)\right)\right)
\]
So for every $\delta \in \mathds{R}$ there exists $N_{\delta}$ such that if $k > N_{\delta}$ then

\[
\sup_{\alpha\in\mathds{R}^m}\left(\inf_{x\in\mathcal{S}_k} \left(g(x) + \innp{\alpha}{(f(x) - \phi)}\right)\right) < \delta \Rightarrow
\]\[
\inf_{x\in\mathcal{S}_k} \left(g(x) + \innp{\alpha}{(f(x) - \phi)}\right) < \delta \,\,\forall\,\,\alpha\in\mathds{R}^m
\]\[
\mbox{But}\quad\inf_{x\in\mathcal{S}} \left(g(x) + \innp{\alpha}{(f(x) - \phi)}\right) \leq \inf_{x\in\mathcal{S}_k} \left(g(x) + \innp{\alpha}{(f(x) - \phi)}\right) < \delta \,\,\forall\,\,\alpha\in\mathds{R}^m \Rightarrow
\]\[
\inf_{x\in\mathcal{S}} \left(g(x) + \innp{\alpha}{(f(x) - \phi)}\right) < \delta \,\,\forall\,\,\alpha\in\mathds{R}^m, \delta\in\mathds{R} \Rightarrow
\]\[
\inf_{x\in\mathcal{S}} \left(g(x) + \innp{\alpha}{(f(x) - \phi)}\right) = -\infty \,\,\forall\,\,\alpha\in\mathds{R}^m \Rightarrow
\]\[
\sup_{\alpha\in\mathds{R}^m}\left(\inf_{x\in\mathcal{S}} \left(g(x) + \innp{\alpha}{(f(x) - \phi)}\right)\right) = -\infty = \inf_{\mu\in\MSF} \expect{g(X)}_{\mu}
\]
completing the proof of this case. \qed
\end{proof}

The main advantage of this formulation is that finding the bounds becomes a convex optimization problem in $\mathds{R}^m$. In particular, we minimize some convex function $F(\alpha)$, where evaluating $F$ is akin to solving a global optimization in $\mathcal{S}\subseteq\mathds{R}^n$. More precisely, if we define

\[
G(x;\alpha) = g(x) + \innp{\alpha}{(f(x) - \phi)}
\]
then the convex functions we must use for finding the lower and upper bounds of $\expect{g(X)}_{\mu}$ are

\[
F_{\pm}: \mathds{R}^m\rightarrow \overline{\mathds{R}}\quad\mbox{such that}\quad F_{\pm}(\alpha) = \sup_{x\in\mathcal{S}} \left(\pm G(x;\alpha)\right),
\]
as we have

\begin{equation}
\inf_{\mu\in\MSF} \expect{g(X)}_{\mu} = - \inf_{\alpha\in\mathds{R}^m} F_-(\alpha)\quad\quad\mbox{and}\quad\quad \sup_{\mu\in\MSF} \expect{g(X)}_{\mu} = \inf_{\alpha\in\mathds{R}^m} F_+(\alpha)
\label{eq:bounds-F1}
\end{equation}

If we consider the situation where $\phi\in\inter{\mathcal{D}(\mathcal{S})}$ (that is the constraints on the expectations are feasible and the hypothesis of theorem \ref{th:numerical} is obeyed), then the complexity of solving the problem numerically with this approach grows polynomially in $m$ and exponentially in $n$, which is a huge improvement over the more naive approach of the previous sections. Nevertheless, this approach is not as useful for obtaining analytical results and makes it harder to use special properties of the $f_i$ and $g$, so there is actually a tradeoff between the two approaches. Finally, the intermediate steps of the minimization of both $F_{\pm}$ can be used to create bounds that are looser but numerically cheaper to obtain (only a rough idea of where the extrema are might already lead to an useful bound):

\begin{corollary}
If $\phi\in\inter{\mathcal{D}(\mathcal{S})}$
\[
- F_-(\alpha) \leq \expect{g(X)}_{\mu} \leq F_+(\alpha') \quad\quad\forall\,\, \alpha, \alpha' \in\mathds{R}^m\,\,\mbox{and}\,\,\mu\in\MSF
\]
\label{cor:loose-bounds}
\end{corollary}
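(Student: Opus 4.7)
The plan is to observe that this corollary follows directly from the definitions of $F_{\pm}$ combined with the single fact that $\expect{f(X)}_{\mu} = \phi$ for every $\mu \in \MSF$; no appeal to the duality in theorem \ref{th:numerical} is actually needed for the weak inequalities themselves. Theorem \ref{th:numerical} enters only implicitly, through the hypothesis $\phi \in \inter{\mathcal{D}(\mathcal{S})}$, to guarantee that $\MSF \neq \varnothing$ and that the bounds are nontrivial (both $F_+(\alpha)$ and $-F_-(\alpha)$ are finite on a nonempty set).

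First I would fix arbitrary $\mu \in \MSF$ and $\alpha, \alpha' \in \mathds{R}^m$. Directly from the definition $F_+(\alpha') = \sup_{x \in \mathcal{S}} G(x; \alpha')$, the pointwise inequality
\[
g(x) + \innp{\alpha'}{(f(x) - \phi)} \leq F_+(\alpha')
\]
holds for every $x \in \mathcal{S}$, and therefore $\mu$-almost surely on the support of $\mu$. Integrating against $\mu$, which is legitimate because $\expect{g(X)}_{\mu}$ is finite by the definition of $\MSF$ and each $\expect{f_i(X)}_{\mu} = \phi_i$ exists, gives
\[
\expect{g(X)}_{\mu} + \innp{\alpha'}{\expect{f(X)}_{\mu} - \phi} \leq F_+(\alpha').
\]
The constraint $\expect{f(X)}_{\mu} = \phi$ then annihilates the inner product, leaving $\expect{g(X)}_{\mu} \leq F_+(\alpha')$. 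The lower bound is obtained analogously by starting from $-G(x;\alpha) \leq F_-(\alpha)$ on $\mathcal{S}$, integrating, and using the same cancellation, which yields $-F_-(\alpha) \leq \expect{g(X)}_{\mu}$.

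There is no real obstacle here; the argument is a one-line consequence of linearity of expectation and the affine form of $G(x;\alpha)$ in the constraint variables $f(x) - \phi$. The only thing worth mentioning is the handling of the edge case $F_+(\alpha') = +\infty$ (or $F_-(\alpha) = +\infty$), where the inequality is vacuous and nothing needs to be proved. The value of the corollary is conceptual rather than technical: since the above derivation is valid for every $\alpha$ and $\alpha'$ independently, any partial progress made during a numerical minimization of $F_{\pm}$ already furnishes a rigorous (if suboptimal) enclosure of $\expect{g(X)}_{\mu}$, so early termination of the optimization still yields usable bounds.
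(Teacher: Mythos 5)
Your proof is correct, but it takes a genuinely different route from the paper. The paper's proof is a one-line appeal to equation (\ref{eq:bounds-F1}), i.e.\ to the full strong duality of theorem \ref{th:numerical}: since $\sup_{\mu\in\MSF}\expect{g(X)}_{\mu}=\inf_{\alpha'}F_+(\alpha')\leq F_+(\alpha')$ and $\inf_{\mu\in\MSF}\expect{g(X)}_{\mu}=-\inf_{\alpha}F_-(\alpha)\geq -F_-(\alpha)$, the corollary follows immediately, at the cost of invoking the hypothesis $\phi\in\inter{\mathcal{D}(\mathcal{S})}$ (which guarantees $\phi\notin\bound{\mathcal{D}(\mathcal{S})}$) so that the theorem applies. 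You instead prove only the weak-duality direction directly: the pointwise bound $G(x;\alpha')\leq F_+(\alpha')$ on $\mathcal{S}$ holds $\mu$-almost surely, integration is justified because $g$ and the $f_i$ are $\mu$-integrable by the definition of $\MSF$, and the constraint $\expect{f(X)}_{\mu}=\phi$ kills the inner product; the case $F_{\pm}(\alpha)=+\infty$ is vacuous. This is essentially the same observation the paper makes inside the proof of theorem \ref{th:numerical} (that $(\alpha,z)\in\mathcal{C}$ forces $\expect{g(X)}_{\mu}\geq -z$), so your argument is a self-contained extraction of that step. What your route buys is independence from the heavy machinery: the inequalities hold for every $\mu\in\MSF$ and all $\alpha,\alpha'$ with no assumption on $\phi$ at all, the interiority hypothesis being needed only to ensure (via theorem \ref{th:numerical}) that minimizing $F_{\pm}$ actually recovers tight bounds; what the paper's route buys is brevity, since the hard work has already been done in the theorem. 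Your remark about early termination of the numerical minimization still yielding rigorous bounds matches the paper's stated motivation for the corollary.
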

\begin{proof}
This follows directly from equation (\ref{eq:bounds-F1}) \qed
\end{proof}

\subsection{A subgradient for $F_{\pm}$}

In order to find the value of $F_{\pm}(\alpha)$ for a given $\alpha$, we will need to solve an optimization problem in $\mathcal{S}$. Doing this numerically will typically lead us to sequences $(x^{\pm}_n)_{n=1}^{\infty}$ in $\mathcal{S}$, such that

\[
\lim_{n\rightarrow\infty} G(x^{\pm}_n;\alpha) = \pm F_{\pm}(\alpha).
\]
Interestingly, if the sequences $(f(x^{\pm}_n))_{n=1}^{\infty}$ are convergent, they can be used to find a subgradient for $F_{\pm}(\alpha)$, without the need for evaluating $F_{\pm}$ for different values of $\alpha$. More precisely

\begin{lemma}
If $\alpha$ is such that $F_{\pm}(\alpha)$ is finite, $(x^{\pm}_n)_{n=1}^{\infty}$ is such that

\[
\lim_{n\rightarrow\infty} G(x^{\pm}_n;\alpha) = \pm F_{\pm}(\alpha). \quad\mbox{ and the limit }\quad
S_{\pm}(\alpha) = \pm \lim_{n\rightarrow\infty} (f(x_n^{\pm}) - \phi)
\]
is convergent, then $S_{\pm}(\alpha)$ is a subgradient of $F_{\pm}$ at $\alpha$.
\end{lemma}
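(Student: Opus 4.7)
The plan is to exploit the fact that for any fixed $x\in\mathcal{S}$, the function $\alpha\mapsto G(x;\alpha)=g(x)+\innp{\alpha}{(f(x)-\phi)}$ is affine in $\alpha$, with gradient $f(x)-\phi$. Since $F_+$ is the supremum of this family of affine functions and $F_-$ is the supremum of $-G(x;\cdot)$ over $x$, each of them is convex, and the subgradient inequality should follow by taking a limit of the obvious affine minorants coming from the sequence $(x_n^{\pm})$.

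More concretely, I would proceed as follows. Fix $\alpha'\in\mathds{R}^m$. For the $F_+$ case, write the affine decomposition
\[
G(x_n^+;\alpha') = G(x_n^+;\alpha) + \innp{\alpha'-\alpha}{f(x_n^+)-\phi}.
\]
Since $G(x_n^+;\alpha')\leq F_+(\alpha')$ by definition of the supremum, we get
\[
G(x_n^+;\alpha) + \innp{\alpha'-\alpha}{f(x_n^+)-\phi} \leq F_+(\alpha').
\]
Now let $n\to\infty$: by hypothesis the left side converges to $F_+(\alpha) + \innp{\alpha'-\alpha}{S_+(\alpha)}$, giving
\[
F_+(\alpha) + \innp{S_+(\alpha)}{\alpha'-\alpha} \leq F_+(\alpha'),
\]
which is exactly the statement that $S_+(\alpha)$ is a subgradient of $F_+$ at $\alpha$.

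The $F_-$ case is entirely analogous. Using $-G(x_n^-;\alpha')=-G(x_n^-;\alpha)+\innp{\alpha'-\alpha}{-(f(x_n^-)-\phi)}$ and the bound $-G(x_n^-;\alpha')\leq F_-(\alpha')$, passing to the limit yields
\[
F_-(\alpha) + \innp{S_-(\alpha)}{\alpha'-\alpha} \leq F_-(\alpha'),
\]
since by definition $S_-(\alpha)=-\lim_n (f(x_n^-)-\phi)$. There is essentially no obstacle here: the main thing to be careful about is that the hypothesis $F_\pm(\alpha)$ is finite together with the convergence of $S_\pm(\alpha)$ lets us pass the limit inside the left-hand side without ambiguity, so that the affine minorant at $\alpha$ extends to an affine minorant at every $\alpha'$. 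This produces an honest subgradient, independent of whether $F_\pm$ is differentiable at $\alpha$.
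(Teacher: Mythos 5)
Your proof is correct and follows essentially the same route as the paper's: both verify the subgradient inequality directly by exploiting that $\alpha\mapsto G(x;\alpha)$ is affine with slope $f(x)-\phi$, evaluating along the extremizing sequence $(x_n^{\pm})$ and using $\pm G(x_n^{\pm};\alpha')\leq F_{\pm}(\alpha')$ before passing to the limit. The only cosmetic difference is that the paper writes the difference $F_{\pm}(\alpha')-F_{\pm}(\alpha)-\innp{S_{\pm}(\alpha)}{(\alpha'-\alpha)}$ and cancels terms, while you rearrange the same identity into an affine minorant; the content is identical.
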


\begin{proof}
The proof is by direct verification. We need to show that for all $\alpha '\in\mathds{R}^m$ we have

\[
F_{\pm}(\alpha ') - F_{\pm}(\alpha) - \innp{S_{\pm}(\alpha)}{(\alpha ' - \alpha)} \geq 0
\]
If $F_{\pm}(\alpha ')$ is not finite, then the left hand side is $\infty$ (its definition implies that $F_{\pm}(\alpha ')$ cannot be $-\infty$) and the inequality follows trivially, so we only need to consider the cases where $F_{\pm}(\alpha ')$ is finite.
For $F_+$:

\[
F_{+}(\alpha ') - F_{+}(\alpha) - \innp{S_{+}(\alpha)}{(\alpha ' - \alpha)} = 
\]\[
\sup_{x\in\mathcal{S}} G(x;\alpha ') - \lim_{n\rightarrow\infty} \left(g(x_n^+) + \cancel{\innp{\alpha}{(f(x_n^+) - \phi})}\right) - \lim_{n\rightarrow\infty} \innp{(f(x_n^+) - \phi)}{(\alpha ' - \cancel{\alpha})} = 
\]\[
\sup_{x\in\mathcal{S}} G(x;\alpha ') - \lim_{n\rightarrow\infty} \left(g(x_n^+) + \innp{\alpha '}{(f(x_n^+) - \phi})\right) = \sup_{x\in\mathcal{S}} G(x;\alpha ') - \lim_{n\rightarrow\infty} G(x_n^+;\alpha ') \geq 0
\]

For $F_-$:

\[
F_{-}(\alpha ') - F_{-}(\alpha) - \innp{S_{-}(\alpha)}{(\alpha ' - \alpha)} = 
\]\[
\sup_{x\in\mathcal{S}} (-G(x;\alpha ')) + \lim_{n\rightarrow\infty} \left(g(x_n^+) + \cancel{\innp{\alpha}{(f(x_n^+) - \phi})}\right) + \lim_{n\rightarrow\infty} \innp{(f(x_n^+) - \phi)}{(\alpha ' - \cancel{\alpha})} = 
\]\[
- \inf_{x\in\mathcal{S}} G(x;\alpha ') + \lim_{n\rightarrow\infty} \left(g(x_n^+) + \innp{\alpha '}{(f(x_n^+) - \phi})\right) = \lim_{n\rightarrow\infty} G(x_n^+;\alpha ') - \inf_{x\in\mathcal{S}} G(x;\alpha ') \geq 0
\] \qed
\end{proof}

This result implies that a subgradient method can be used to obtain the bounds numerically, under no extra assumptions about $f$ and $g$. Also, note that if $\mathcal{S}$ is compact and $f,g$ are continuous in $\mathcal{S}$, then no limits need to be taken and we can just use the estimates for $\argmax_{x\in\mathcal{S}}(\pm G(x;\alpha))$ obtained when calculating $F_{\pm}(\alpha)$.

\subsection{An analytical example}
\label{ssec:cavina}

Let $X$ be a random variable such that $X\geq a$, $a<0$ and $\expect{e^X} = 1$. Given some value $\lambda > a$ we are interested in the largest value possible for $\prob{X \geq \lambda}$. This problem was studied in \cite{Cavina-2016} (in the context of finding the optimal work extraction of a process obeying Jarzynski's equality \cite{Jarzynski-1997}) where it was found that

\begin{equation}
\prob{X \geq \lambda} \leq \min\left\{1, \frac{1-e^a}{e^{\lambda} - e^a}\right\}
\label{eq:cavina}
\end{equation}
holds and is sharp.

We can obtain the same result with theorem \ref{th:numerical}. We have in this case $\mathcal{S} = [a,\infty[$, $g(x) = \Theta(x-\lambda)$, $f(x)=e^x$, $\phi=1$ and $\mathcal{D}(\mathcal{S}) = [e^a, \infty[$. Since $\phi\in\inter{\mathcal{D}(\mathcal{S})}$, then theorem \ref{th:numerical} tells us that the answer is

\[
\sigma_+ \equiv \inf_{\alpha}\left(\sup_{x \geq a} \left(\Theta(x-\lambda) + \alpha (e^x - 1)\right)\right)
\]
One can easily determine that

\[
F(\alpha)\equiv \sup_{x \geq a} \left(\Theta(x-\lambda) + \alpha (e^x - 1)\right) = 
\left\{
\begin{array}{ll}
\infty, & \mbox{if }\alpha > 0 \\
\alpha(e^{\lambda} - 1) + 1, & \mbox{if } \frac{1}{e^a - e^{\lambda}}\leq \alpha\leq 0\\
\alpha(e^a - 1), & \mbox{if }\alpha \leq \frac{1}{e^a - e^{\lambda}}
\end{array}
\right.
\]

\begin{figure}[hbtp!]
\centering
\subfigure[]{\includegraphics[width=0.3\textwidth]{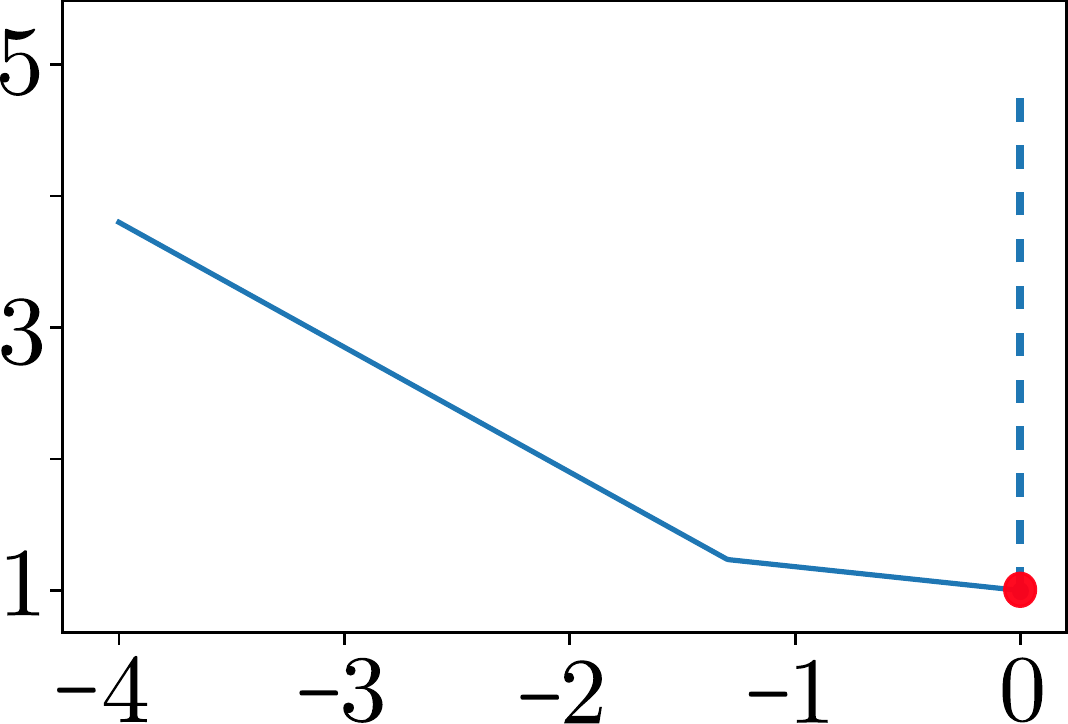}}\quad\quad
\subfigure[]{\includegraphics[width=0.3\textwidth]{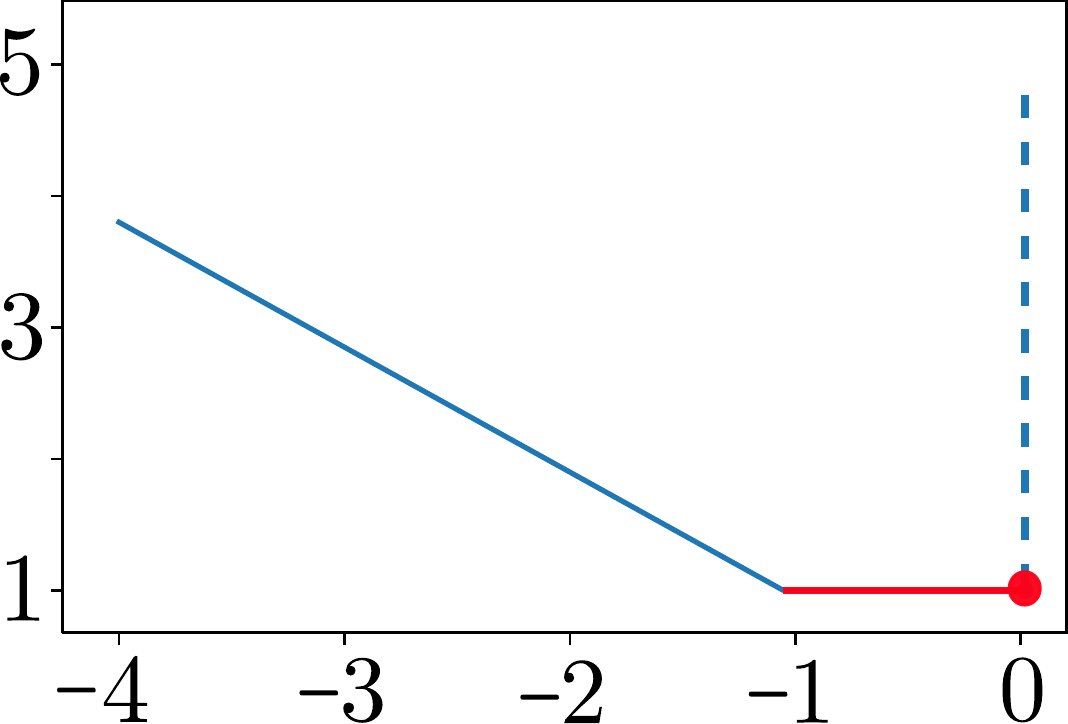}}\quad\quad
\subfigure[]{\includegraphics[width=0.3\textwidth]{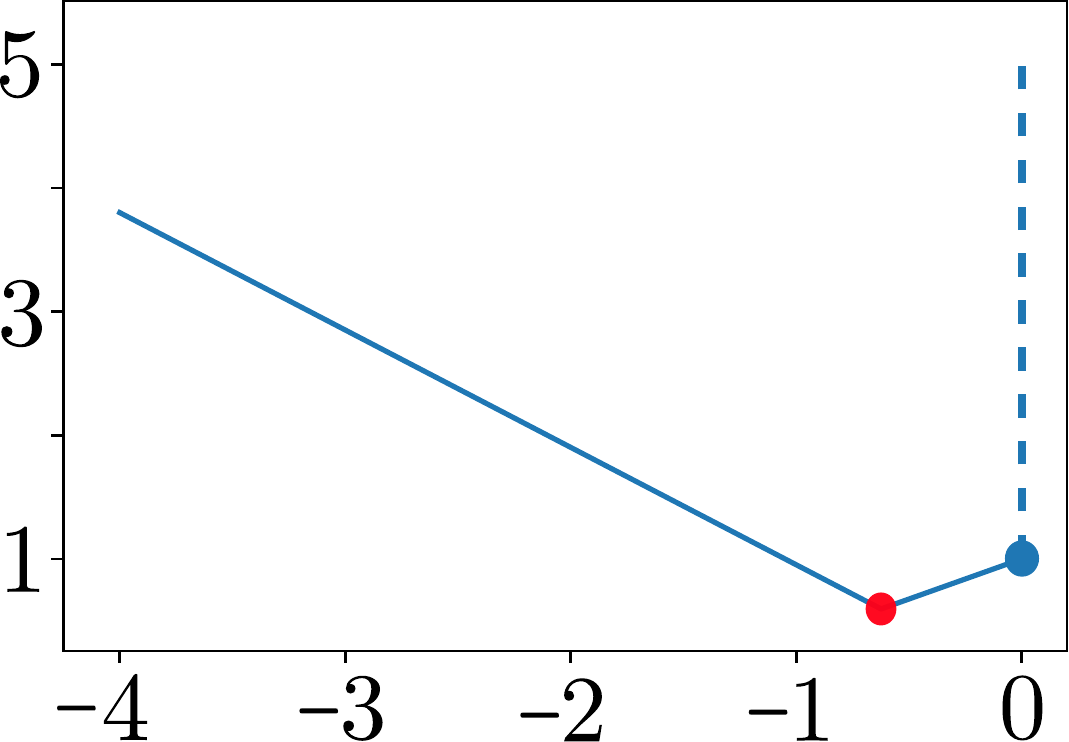}}
\caption{$F(\alpha)$ for $a=-3$ and different values of $\lambda$: (a) $\lambda = -0.2$ (b) $\lambda = 0$ (c) $\lambda = 0.5$}
\label{fig:dual}
\end{figure}

The graph of $F(\alpha)$ is slightly different depending on the sign of $\lambda$ (Fig \ref{fig:dual}), with the minimum value attained at $\alpha = 0$ if $\lambda \leq 0$ and at $\alpha = \frac{1}{e^a - e^{\lambda}}$ if $\lambda > 0$. Substituting we get the result in eq (\ref{eq:cavina}).

\[
\sigma_+ = \left\{
\begin{array}{ll}
1, & \mbox{if }\lambda \leq 0 \\
\frac{1-e^a}{e^{\lambda} - e^a}, &\mbox{if }\lambda > 0
\end{array}
\right.
\quad\quad\mbox{or}\quad
\sigma_+ = \min\left\{1, \frac{1-e^a}{e^{\lambda} - e^a}\right\}
\]

\subsection{An example with two variables}

As a final example, let us consider the problem of finding an upper bound for $\expect{e^X + e^Y}$ subjected to $\expect{X} = 0$, $\expect{XY} = \nicefrac{1}{2}$ and $X,Y\in [-1,1]$. Translating the problem into our framework we have $m=n=2$, $f(x, y) = (x, xy)$, $\phi = (0, \nicefrac{1}{2})$, $\mathcal{S} = [-1,1]^2$ and $\mathcal{D}(\mathcal{S}) = \hull{f(\mathcal{S})} = [-1,1]^2$. Applying theorem \ref{th:numerical}, this lower bound is

\[
\inf_{\alpha,\beta\in\mathds{R}} F_+(\alpha, \beta)\quad\quad\mbox{where}\quad\quad F_+(\alpha, \beta) = \sup_{x,y\in [-1,1]} G(x,y;\alpha, \beta) = \sup_{x,y\in [-1,1]} \left(e^x + e^y + \alpha x + \beta\left(xy-\frac{1}{2}\right)\right)
\]
A graph of $F_+(\alpha, \beta)$ can be found in figure \ref{fig:F-2vars}

\begin{figure}[H]
\centering
\subfigure[]{\includegraphics[height=0.22\textheight]{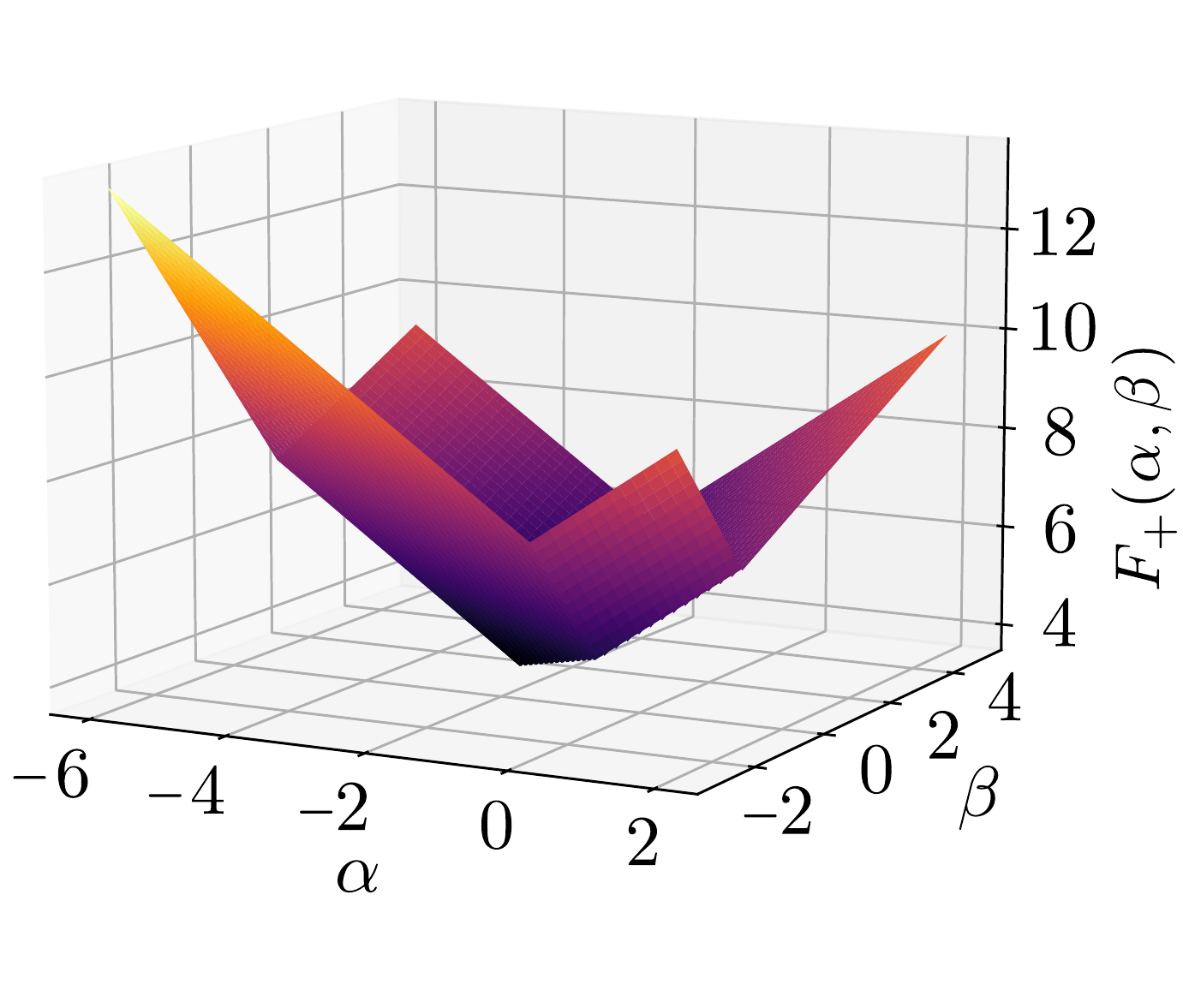}}\quad\quad
\subfigure[]{\includegraphics[height=0.22\textheight]{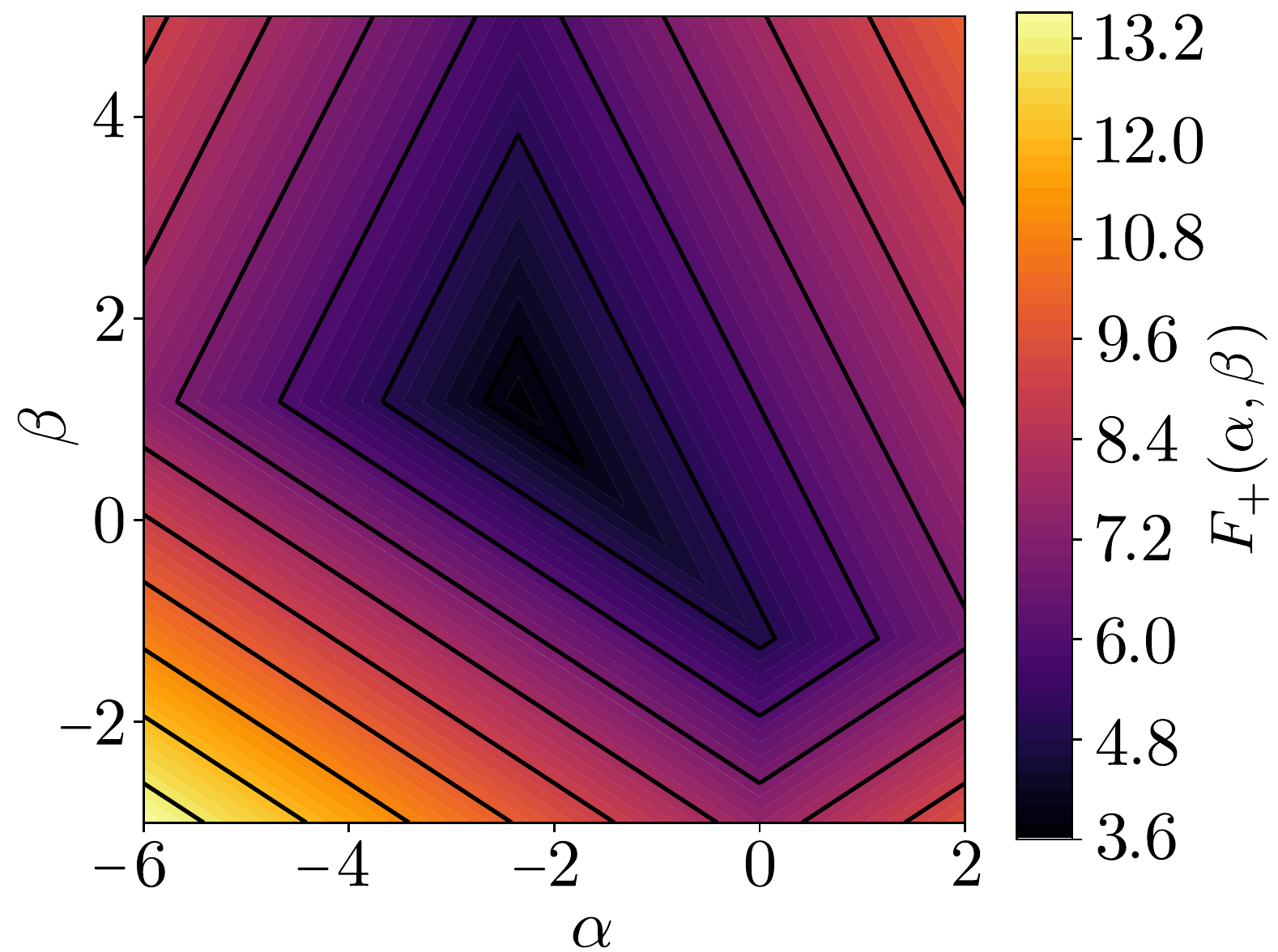}}
\caption{A graph of $F_+(\alpha, \beta)$ around $(\alpha^*, \beta^*) = \argmin F_+$. In (a) we can see that the graph consists of 4 affine regions. Each one of them corresponds to a different point $(x,y)$ that maximizes $G(x,y;\alpha, \beta) = e^x + e^y + \alpha x + \beta\left(xy-\nicefrac{1}{2}\right)$. The point $(\alpha^*, \beta^*)$ is more clearly identified in (b) that displays the contour lines of $F_+$.}
\label{fig:F-2vars}
\end{figure}

Figure \ref{fig:F-2vars}(a) is an example of what happens when there are regions where changing the vector $\alpha$ doesn't change the point $x$ that maximizes $G(x;\alpha)$ (thinking in the general case where $\alpha\in\mathds{R}^m$ and $x\in\mathds{R}^n$). Since $F(\alpha)$ becomes of the form $g(x^*) + \innp{\alpha}{(f(x^*) - \phi)}$ for some fixed $x^*$, then $F(\alpha)$ is affine in that region (the same thing happens in the example of section \ref{ssec:cavina}). The points $(x^*,y^*)$ corresponding to each region in this case are the vertexes of $\mathcal{S}$ (figure \ref{fig:2vars-regions})

\begin{figure}[hbtp]
\centering
\includegraphics[width=0.5\textwidth]{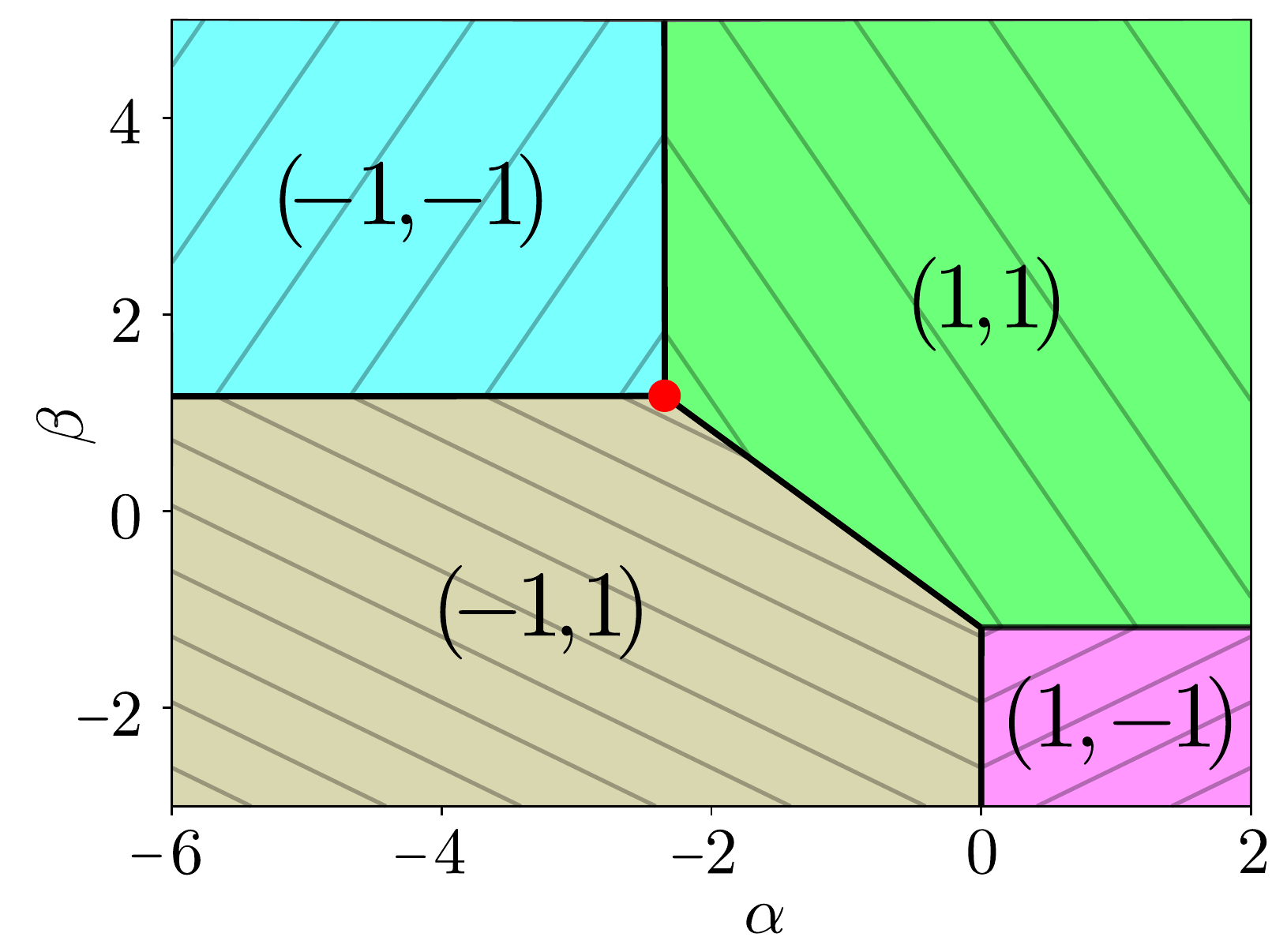}
\caption{The different points $(x^*,y^*)$ corresponding to each of the regions in figure \ref{fig:F-2vars} (each in a different color). The contour lines are included for reference, as well as the optimum $(\alpha^*, \beta^*)$ (in red)}
\label{fig:2vars-regions}
\end{figure}

In fact, knowing these points allows us to obtain $(\alpha^*, \beta^*)$, the optimal bound and even the distribution satisfying equality analytically \footnote{this is done by solving the system $G(-1, -1; \alpha^*, \beta^*) = G(1, 1; \alpha^*, \beta^*) = G(-1, 1; \alpha^*, \beta^*) = g^*$}, instead of having to rely in numerical estimates:

\[
\alpha^*=\frac{1}{e}-e,\quad\quad \beta^*=\frac{1}{2}\left(e - \frac{1}{e}\right)\quad\quad\mbox{and}\quad\quad \expect{e^X + e^Y}_{\mu} \leq \frac{5e}{4} + \frac{3}{4e}\equiv g^*\quad\forall\,\, \mu\in\MSF
\]
To identify the distribution, we connect what we did with theorem \ref{th:analytical}. The function prescribed by theorem \ref{th:analytical} in this case is actually (up to a positive multiplicative constant)

\[
g(x) + \innp{\left(\alpha^*, \beta^*\right)}{(x, xy)} + c^* = G(x, y; \alpha^*, \beta^*) - g^* \leq 0
\]
A graph of $G(x, y; \alpha^*, \beta^*) - g^*$ that makes the connection with theorem \ref{th:analytical} clearer is found in figure \ref{fig:2vars-G}

\begin{figure}[H]
\centering
\includegraphics[width=0.5\textwidth]{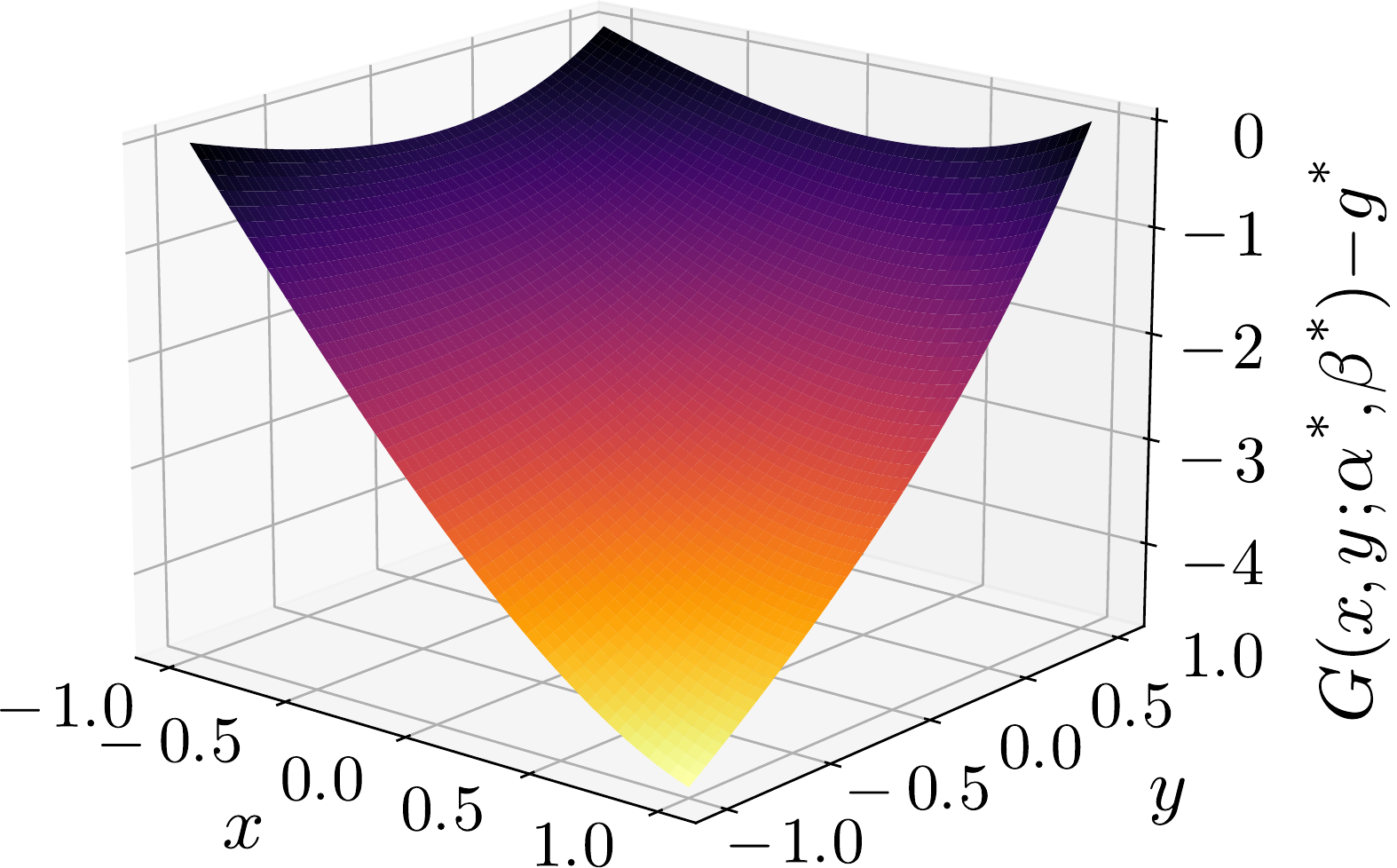}
\caption{The graph of $G(x, y; \alpha^*, \beta^*) - g^*$, showing that $G(x, y; \alpha^*, \beta^*) - g^* \leq 0\,\,\forall\,\, x\in\mathcal{S}$. We can also see that it achieves its maximum (0) exactly at the points $(-1,-1)$, $(-1,1)$ and $(1,1)$, corresponding to the regions whose intersection yields $(\alpha^*, \beta^*)$ in figure \ref{fig:2vars-regions}}
\label{fig:2vars-G}
\end{figure}

In figure \ref{fig:2vars-G} we see that the only roots of $G(x, y; \alpha^*, \beta^*) - g^*$ are $(-1,-1)$, $(-1,1)$ and $(1,1)$, corresponding to the regions whose intersection yields $(\alpha^*, \beta^*)$ in figure \ref{fig:2vars-regions}. This will be the support of the distribution that maximizes $\expect{e^X + e^Y}$ (because of theorem \ref{th:analytical}). Applying the constraints, one can easily obtain

\[
p_{(-1,-1)} = \frac{1}{4},\quad\quad p_{(-1,1)} = \frac{1}{4}\quad\quad\mbox{and}\quad\quad p_{(1,1)} = \frac{1}{2}
\]
as a distribution such that $\expect{e^X + e^Y} = g^*$.

\section{Open Questions}
\label{sec:conclusions}


As future avenues of research, we can point out the following questions that our results raise:

\begin{itemize}
\item Are there other classes of functions for which analytical results for the Jensen gap can be obtained, like in theorems \ref{th:dg-strict-convex} and \ref{th:dg-strict-convex2}? This would be interesting particularly for the cases where $n>1$.
\item Regarding some limitations of theorem \ref{th:numerical}, an interesting question is whether anything can be said in general about the case $\phi\in\bound{\mathcal{D}}$ at all.
\item The convex optimization problem that arises in theorem \ref{th:numerical} doesn't seem to have been studied in detail and even though we were able to show that it is amenable to a subgradient method, we were unable to find a way to tackle it with a higher order method (for example, it seems to be outside of the scope of barrier methods \cite{conv-opt}). As such, extending these methods to handle this new setup would be a very interesting undertaking.
\item When the $f_i$ and $g$ are polynomials, the bounds can be obtained by semidefinite programming and must coincide with the results coming from theorem \ref{th:numerical}. Whether this means that the optimization in theorem \ref{th:numerical} can be rewritten as a semidefinite program might be worth exploring.
\end{itemize}

\bibliographystyle{spmpsci}
\bibliography{jensen}

\end{document}